\definecolor{cream}{cmyk}{0,0,0.3,0}
\definecolor{orange}{cmyk}{0,0.61,0.87,0}
\definecolor{whiteblue}{cmyk}{0.2,0,0,0}
\definecolor{whitepeach}{cmyk}{0,0.2,0,0}
\definecolor{peach}{cmyk}{0,0.2,0.2,0}
\definecolor{bluegreen}{cmyk}{0.2,0,0.1,0}
\definecolor{whitepurple}{cmyk}{0.3,0.2,0.1,0}
\def\NZQ{\mathbb}               
\def\ZZ{{\NZQ Z}}
\def\frk{\mathfrak}               
\def\Phi{{\frk N}}
\def\opn#1#2{\def#1{\operatorname{#2}}} 
\opn\chara{char} 
\opn\length{\ell} 
\opn\pd{pd} 
\opn\rk{rk}
\opn\projdim{proj\,dim} 
\opn\injdim{inj\,dim} 
\opn\rank{rank}
\opn\depth{depth} 
\opn\grade{grade} 
\opn\height{height}
\opn\embdim{emb\,dim} 
\opn\codim{codim}
\opn\Tr{Tr} 
\opn\bigrank{big\,rank}
\opn\superheight{superheight}
\opn\lcm{lcm}
\opn\trdeg{tr\,deg}
\opn\reg{reg} 
\opn\lreg{lreg} 
\opn\ini{in} 
\opn\lpd{lpd}
\opn\size{size}
\opn\mult{mult}
\opn\dist{dist}
\opn\cone{cone}
\opn\lex{lex}
\opn\rev{rev}
\opn\div{div} \opn\Div{Div} \opn\cl{cl} \opn\Cl{Cl}
\opn\Spec{Spec} \opn\Supp{Supp} \opn\supp{supp} \opn\Sing{Sing}
\opn\Ass{Ass} \opn\Min{Min}
\opn\Ann{Ann} \opn\Rad{Rad} \opn\Soc{Soc}
\opn\Syz{Syz} \opn\Im{Im} \opn\Ker{Ker} \opn\Coker{Coker}
\opn\Am{Am} \opn\Hom{Hom} \opn\Tor{Tor} \opn\Ext{Ext}
\opn\End{End} \opn\Aut{Aut} \opn\id{id} \opn\ini{in}
\opn\nat{nat}
\opn\pff{pf}
\opn\Pf{Pf} \opn\GL{GL} \opn\SL{SL} \opn\mod{mod} \opn\ord{ord}
\opn\Gin{Gin}
\opn\Hilb{Hilb}\opn\adeg{adeg}\opn\std{std}\opn\ip{infpt}
\opn\Pol{Pol}
\opn\sat{sat}
\opn\Var{Var}
\opn\Gen{Gen}
\opn\aff{aff} \opn\con{conv} \opn\relint{relint} \opn\st{st}
\opn\lk{lk} \opn\cn{cn} \opn\core{core} \opn\vol{vol}
\opn\link{link} \opn\star{star}
\opn\gr{gr}
\def\pot#1#2{#1[\kern-0.28ex[#2]\kern-0.28ex]}
\opn\dirlim{\underrightarrow{\lim}}
\opn\inivlim{\underleftarrow{\lim}}
\let\to=\rightarrow
\def\Implies{\ifmmode\Longrightarrow \else
        \unskip${}\Longrightarrow{}$\ignorespaces\fi}
\def\implies{\ifmmode\Rightarrow \else
        \unskip${}\Rightarrow{}$\ignorespaces\fi}
\def\iff{\ifmmode\Longleftrightarrow \else
        \unskip${}\Longleftrightarrow{}$\ignorespaces\fi}
\newtheorem{Theorem}{Theorem}[section]
\newtheorem{Lemma}[Theorem]{Lemma}
\newtheorem{Corollary}[Theorem]{Corollary}
\newtheorem{Proposition}[Theorem]{Proposition}
\newtheorem{Remark}[Theorem]{Remark}
\newtheorem{Example}[Theorem]{Example}
\newtheorem{Question}[Theorem]{Question}
\numberwithin{equation}{section}
\let\epsilon\varepsilon
\let\phi=\varphi
\let\kappa=\varkappa
\def\qed{\ifhmode\textqed\fi
      \ifmmode\ifinner\quad\qedsymbol\else\dispqed\fi\fi}
\def\textqed{\unskip\nobreak\penalty50
       \hskip2em\hbox{}\nobreak\hfil\qedsymbol
       \parfillskip=0pt \finalhyphendemerits=0}
\def\dispqed{\rlap{\qquad\qedsymbol}}
\opn\dis{dis}
\opn\height{height}
\opn\dist{dist}
\def\pnt{{\raise0.5mm\hbox{\large\bf.}}}
\opn\Lex{Lex}
\begin{document}
\title{Regularity and $a$-invariant of Cameron--Walker graphs}
\author{Takayuki Hibi, Kyouko Kimura, Kazunori Matsuda and Akiyoshi Tsuchiya}
\address{Takayuki Hibi,
Department of Pure and Applied Mathematics,
Graduate School of Information Science and Technology,
Osaka University, Suita, Osaka 565-0871, Japan}
\email{hibi@math.sci.osaka-u.ac.jp}

\address{Kyouko Kimura,
Department of Mathematics, 
Faculty of Science, 
Shizuoka University, 836 Ohya, Suruga-ku, Shizuoka 422-8529, Japan}
\email{kimura.kyoko.a@shizuoka.ac.jp}

\address{Kazunori Matsuda,
Kitami Institute of Technology, 
Kitami, Hokkaido 090-8507, Japan}
\email{kaz-matsuda@mail.kitami-it.ac.jp}

\address{Akiyoshi Tsuchiya,
Department of Pure and Applied Mathematics,
Graduate School of Information Science and Technology,
Osaka University, Suita, Osaka 565-0871, Japan}
\email{a-tsuchiya@ist.osaka-u.ac.jp}
\subjclass[2010]{05E40, 13H10}
\keywords{Castelnuovo--Mumford regularity, $a$-invariant, edge ideal, Cameron--Walker graph}
\begin{abstract}
Let $S$ be the polynomial ring over a field $K$ and $I \subset S$ a homogeneous ideal. 
Let $h(S/I,\lambda)$ be the $h$-polynomial of $S/I$ and $s = \deg h(S/I,\lambda)$ the degree of $h(S/I,\lambda)$. 
It follows that the inequality $s - r \leq d - e$, 
where $r = \reg(S/I)$, $d = \dim S/I$ and $e = \depth S/I$, 
is satisfied and, in addition, the equality $s - r = d - e$ holds 
if and only if $S/I$ has a unique extremal Betti number. 
We are interested in finding a natural class of finite simple graphs $G$ for which $S/I(G)$, 
where $I(G)$ is the edge ideal of $G$, satisfies $s - r = d - e$. 
Let $a(S/I(G))$ denote the $a$-invariant of $S/I$, 
i.e., $a(S/I(G)) = s - d$.  One has $a(S/I(G)) \leq 0$. 
In the present paper, by showing the fundamental fact that every Cameron--Walker graph $G$ satisfies $a(S/I(G)) = 0$, 
a classification of Cameron--Walker graphs $G$ for which $S/I(G)$ satisfies $s - r = d - e$ will be exhibited.
\end{abstract}
\maketitle
\section*{Introduction}
In the current trends on combinatorial and computational commutative algebra, 
the study on regularity of edge ideals of finite simple graphs becomes 
fashionable and many papers including 
\cite{BC1, DHS, HaVanTuyl, Ku, W} 
have been published.  
In the present paper we are interested in the regularity and the $h$-polynomials of edge ideals.

Let $S = K[x_1, \ldots, x_n]$ denote the polynomial ring in 
$n$ variables over a field $K$ with each $\deg x_i = 1$ and 
$I \subset S$ a homogeneous ideal of $S$ with $\dim S/I = d$.  
The Hilbert series $H\left(S/I, \ \lambda \right)$ of $S/I$ is of the form 
\[
H\left(S/I, \ \lambda \right) = \frac{h_0 + h_1\lambda + h_2\lambda^2 + \cdots + h_s\lambda^s}{(1 - \lambda)^d}, 
\]
where each $h_i \in \ZZ$ (\cite[Proposition 4.4.1]{BH}).  
We say that 
\[
h\left(S/I, \ \lambda \right) = h_0 + h_1\lambda + h_2\lambda^2 + \cdots + h_s\lambda^s
\] 
with $h_s \neq 0$ is the {\em $h$-polynomial} of $S/I$.  
We call the difference 
$\deg h\left(S/I, \ \lambda \right) - \dim S/I$ 
the $a$-invariant (\cite[Definition 4.4.4]{BH}) of $S/I$ and denote it by $a(S/I)$. 
It is known that $a(S/I) \leq 0$ if $I$ is a squarefree monomial ideal. 

\par
Let
\[
{\bf F}_{S/I} : 0 \to \bigoplus_{j \geq 1} S(-(p+j))^{\beta_{p, p + j}(S/I)} \to \cdots \to \bigoplus_{j \geq 1} S(-(1+j))^{\beta_{1, 1 + j}(S/I)} \to S \to S/I \to 0
\]
be the minimal graded free resolution of $S/I$ over $S$, where $p$ is the projective dimension of $S/I$. 
The ({\em Castelnuovo--Mumford}\,) {\em regularity} of $S/I$ is  
\[
\reg\left(S/I\right) = \max\{j \, : \, \beta_{i, i + j} (S/I) \neq 0\}.
\] 
The inequality
\begin{equation}
   \label{eq:srde}
\deg h\left(S/I, \ \lambda \right) -  \reg\left( S/I \right) \leq \dim S/I - \depth\left( S/I \right)
\end{equation}
is well known (\cite[Corollary B.4.1]{V}) and its proof is easy. 
In fact, since \cite[Lemma 4.1.13]{BH} says that 
\[
H\left( S/I, \ \lambda \right) = \frac{\sum_{i = 0}^{p} (-1)^{i} \sum_{j \in \mathbb{Z}} \beta_{i, i + j}(S/I) \lambda^{i + j} }{(1 - \lambda)^{n}} 
= \frac{h(S/I, \lambda) \cdot (1 - \lambda)^{n - \dim S/I}}{(1 - \lambda)^{n}}, 
\]
it follows that $\deg h\left(S/I, \ \lambda \right) \leq p + \reg\left( S/I \right) - n + \dim S/I$.  Furthermore, since 
%
%
$n - p = \depth(S/I)$ by Auslander--Buchsbaum Theorem, 
the inequality (\ref{eq:srde}) follows.  
In addition, the equality 
\[
(*) \ \ \deg h\left(S/I, \ \lambda \right) - \reg\left( S/I \right) = \dim S/I - \depth\left( S/I \right)
\] 
holds if and only if $\beta_{p, p + \reg (S/I)} (S/I) \neq 0$, in other words, 
if and only if 
$S/I$ has a unique extremal Betti number (\cite[Definition 4.3.13]{HH}). 
In particular, the equality $(*)$ holds 
if $S/I$ is Cohen--Macaulay by \cite[Lemma 3]{BiHe} 
or $I$ has a pure resolution (\cite[p.~153]{BH}).



\par
Let $G$ be a finite simple graph (i.e. a graph with no loop and no multiple edge) on the vertex set 
$V(G) = \{x_{1}, x_{2}, \ldots, x_{n}\}$ and its edge set $E(G)$. 
Set $S = K[V(G)]$. 
The {\em edge ideal} of $G$ is
\[
I(G) = \left(x_{i}x_{j}\, : \,  \{x_{i}, x_{j}\} \in E(G)\right) \subset S. 
\]
It is natural to ask for which graph $G$, its edge ideal $I(G)$ satisfies 
$a(S/I(G)) = 0$ or the equality $(\ast)$. 
In the present paper we focus on Cameron--Walker graphs. 
Let us recall the definition of a Cameron--Walker graph. 
Let $im(G)$ (resp.\  $m(G)$) denote the induced matching number 
(resp.\  matching number) of $G$, 
see \cite[p.258]{HHKO}. 
Then 
for any finite simple graph $G$, one has 
\begin{equation}
  \label{im-r-m}
im(G) \leq \reg\left( S/I(G) \right) \leq m(G) 
\end{equation}
by virtue of \cite[Theorem 6.7]{HaVanTuyl} and \cite[Lemma 2.2]{K}. 
Cameron and Walker \cite[Theorem 1]{CW} (see also \cite[Remark 0.1]{HHKO}) 
characterized a finite connected simple graph $G$ satisfying $im (G) = m(G)$. 
A {\em Cameron--Walker graph} $G$ is a graph satisfying $im (G) = m(G)$ 
which is neither a star graph nor a star triangle; 
see Section \ref{sec:a-invariant} 
for more detail. 
In \cite{HHKO, TNT}, Cameron--Walker graphs have been studied from a viewpoint of commutative algebra. 

\par
In the present paper, we first prove $a(S/I(G)) = 0$ 
for every Cameron--Walker graph $G$ (Theorem \ref{CW}) 
in Section \ref{sec:a-invariant}. 
We next give a classification of Cameron--Walker graphs $G$ 
whose edge ideal $I(G)$ satisfies the equality $(*)$ 
(Theorem \ref{Main}) 
in Section \ref{sec:(ast)}. 
We also provide some classes of graphs 
other than Cameron--Walker graphs satisfying 
$(\ast)$ (Proposition \ref{other-ast}). 
In general, there is no relationship between the degree 
of the $h$-polynomial and the regularity even for edge ideals; 
see \cite{HMVT}. 
However we prove in Section \ref{sec:application} that 
for a Cameron--Walker graph $G$, 
the inequality $\deg h\left(S/I(G), \lambda \right) \geq \reg (S/I(G))$ holds. 
Moreover we characterize the Cameron--Walker graphs $G$ which satisfy 
the equality (Theorem \ref{s=r}). 

\section{$a$-invariant of Cameron--Walker graphs}
\label{sec:a-invariant}
In this section, we show
\begin{Theorem}\label{CW}
Let $G$ be a Cameron--Walker graph. 
Then $a(K[V(G)]/I(G)) = 0$. 
\end{Theorem}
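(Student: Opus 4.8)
The plan is to combine the standing inequality $a(S/I(G))\le 0$ (valid for every squarefree monomial ideal) with a proof of the reverse inequality $a(S/I(G))\ge 0$; together these force $a(S/I(G))=0$. Writing $d=\dim S/I(G)$, the inequality $a\le 0$ says $\deg h(S/I(G),\lambda)\le d$, so it suffices to prove that the top coefficient $h_d$ of the $h$-polynomial does not vanish. I would pass to the independence complex $\Delta(G)$, whose faces are the independent sets of $G$, so that $S/I(G)=K[\Delta(G)]$ is its Stanley--Reisner ring and $\dim\Delta(G)=d-1$. By the standard formula for the Hilbert series of a Stanley--Reisner ring (\cite[Chapter 5]{BH}), the top $h$-number satisfies $h_d=(-1)^{d-1}\widetilde{\chi}(\Delta(G))$, where $\widetilde{\chi}$ is the reduced Euler characteristic. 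Since $\widetilde{\chi}(\Delta(G))=-P_G(-1)$ for the independence polynomial $P_G(\lambda)=\sum_\sigma \lambda^{|\sigma|}$ (the sum ranging over all independent sets $\sigma$ of $G$, including $\sigma=\emptyset$), one has $h_d=(-1)^{d}P_G(-1)$. Thus the whole theorem reduces to the single assertion $P_G(-1)\neq 0$.

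The main tool is a deletion recursion adapted to pendant vertices. For any vertex $u$ one has $P_G(\lambda)=P_{G-u}(\lambda)+\lambda\,P_{G-N[u]}(\lambda)$, where $N[u]$ denotes the closed neighbourhood of $u$. If $u$ carries at least one leaf, then in $G-u$ those leaves become isolated vertices, each contributing a factor $(1+\lambda)$ to $P_{G-u}(\lambda)$; since $(1+\lambda)$ vanishes at $\lambda=-1$, the first term dies and the recursion collapses to the clean identity $P_G(-1)=-P_{G-N[u]}(-1)$. (Homotopy-theoretically this expresses that attaching a leaf suspends the independence complex, but only the numerical form is needed here.)

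Now I would exploit the Cameron--Walker structure recalled in this section: $G$ is built from a connected bipartite graph on $\{v_1,\dots,v_m\}\sqcup\{w_1,\dots,w_n\}$ by attaching at least one leaf to each $v_i$ and $b_j\ge 0$ pendant triangles to each $w_j$. Set $T=\sum_j b_j$. Because the $v_i$ lie on one side of a bipartite graph they are pairwise non-adjacent, and each $v_i$'s leaves are private to it; hence, peeling off $N[v_1],N[v_2],\dots,N[v_m]$ one at a time, at every stage the next $v_i$ is still present and still carries a leaf, so the identity above applies and each step contributes a factor $-1$, giving $P_G(-1)=(-1)^m P_{G'}(-1)$, where $G'$ is what remains. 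Since the bipartite core is connected, every $w_j$ is adjacent to some $v_i$ and is therefore removed during this process, together with all the $v_i$ and all the leaves; what survives is exactly the set of triangle tips, which form $T$ disjoint edges. As $P_{K_2}(-1)=(1+2\lambda)\big|_{\lambda=-1}=-1$ and the independence polynomial is multiplicative over connected components, $P_{G'}(-1)=(-1)^T$, whence $P_G(-1)=(-1)^{m+T}\neq 0$. This yields $h_d\neq 0$, hence $\deg h(S/I(G),\lambda)=d$ and $a(S/I(G))=0$.

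The step I expect to require the most care is the bookkeeping in the last paragraph: verifying that the successive closed neighbourhoods $N[v_i]$ strip away precisely the bipartite core together with all leaves, and that the residual graph is genuinely a disjoint union of edges. This is also where one must dispose of the degenerate configurations, and it is exactly the point at which the exclusion of stars and star triangles from the definition of a Cameron--Walker graph is used, since it guarantees $m\ge 1$ and that the bipartite core is connected with every $w_j$ having a neighbour among the $v_i$.
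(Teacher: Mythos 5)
Your proposal is correct, but it takes a genuinely different route from the paper's. The paper proves the stronger Proposition~\ref{s=d}, namely the explicit identity $\deg h(S/I(G),\lambda)=\dim S/I(G)=\sum_{i=1}^m s_i+\sum_{j=1}^n\max\{t_j,1\}$, by induction on $m+n$: it computes Hilbert series directly via the deletion/colon recursion (Lemmata~\ref{HilbertSeries}--\ref{disconnected}), with explicit formulas for star graphs and star triangles as base cases, a three-step case analysis ($m=1,n>1$; $m>1,n=1$; $m>1,n>1$), and the technical comparison Lemma~\ref{Lemma}. You instead invoke the known inequality $a(S/I)\le 0$ for squarefree monomial ideals, reduce the theorem to nonvanishing of the top coefficient $h_d=(-1)^{d-1}\widetilde{\chi}(\Delta(G))=(-1)^d P_G(-1)$ of the numerator over $(1-\lambda)^d$ (a standard Stanley--Reisner fact, \cite[Corollary 5.1.9]{BH}), and then establish $P_G(-1)=(-1)^{m+\sum_j t_j}\neq 0$ by evaluating the very same recursion at $\lambda=-1$: since every $v_i$ carries a leaf, the deletion term acquires a factor $(1+\lambda)$ and dies, so peeling $N[v_1],\ldots,N[v_m]$ collapses $G$ onto the disjoint triangle-tip edges. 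Your peeling bookkeeping is sound (the $v_i$ are pairwise nonadjacent, their leaves are private and survive each step, connectivity of the bipartite core removes every $w_j$), so the argument is complete. What each approach buys: yours is shorter, avoids all case analysis and any induction on the bipartite structure, and as a bonus shows $h_d=\pm 1$, i.e.\ $\widetilde{\chi}(\Delta(G))=\pm 1$; the paper's proof is self-contained (it does not need the external fact $a\le 0$) and, more importantly, produces the explicit common value of $\deg h$ and $\dim$, which is reused later, e.g.\ in the proof of Theorem~\ref{s=r} in Section~\ref{sec:application} and in the invariant computations of Section~\ref{sec:(ast)}.
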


We first recall the definition of a Cameron--Walker graph. 
Let $G$ be a finite simple graph on the vertex set 
$V(G)$ with the edge set $E(G)$. 
We call a subset $\mathcal{M} \subset E(G)$ a {\em matching} of $G$ 
if $e \cap e' = \emptyset$ for any $e, e' \in \mathcal{M}$ with $e \neq e'$. 
A matching $\mathcal{M}$ of $G$ is called an {\em induced matching} of $G$ if 
for $e, e' \in \mathcal{M}$ with $e \neq e'$, there is no edge $f \in E(G)$ 
with $e \cap f \neq \emptyset$ and $e' \cap f \neq \emptyset$. 
The {\em matching number} $m(G)$ of $G$ is the maximum cardinality 
of the matchings of $G$. 
Also the {\em induced matching number} $im(G)$ of $G$ is the maximum cardinality 
of the induced matchings of $G$. 
As noted in Introduction, the inequalities 
$im(G) \leq \reg \left(K[V(G)]/I(G)\right) \leq m(G)$ hold. 
By virtue of \cite[Theorem 1]{CW} together with \cite[Remark 0.1]{HHKO}, 
the equality $im(G) = m(G)$ holds 
if and only if $G$ is one of the following graphs: 
\begin{itemize}
	\item a star graph, i.e. a graph joining some paths of length $1$ at one common vertex (see Figure \ref{fig:Star}); 
	\item a star triangle, i.e. a graph joining some triangles at one common vertex (see Figure \ref{fig:StarTriangle}); 
	\item a connected finite graph consisting of a connected bipartite graph with vertex partition 
	$\{v_{1}, \ldots, v_{m}\} \cup \{w_{1}, \ldots, w_{n}\}$ such that there is at least one leaf edge 
	attached to each vertex $v_{i}$ and that there may be possibly some pendant triangles 
	attached to each vertex $w_{j}$. 
	Here a leaf edge is an edge meeting a vertex of degree $1$ and a pendant triangle is a triangle 
	whose two vertices have degree $2$ and the rest vertex has degree more than $2$. 
\end{itemize}
We say that a finite connected simple graph $G$ is {\em Cameron--Walker} 
if $im(G) = m(G)$ and if $G$ is neither a star graph nor a star triangle. 
\begin{Remark}\normalfont
  One can consider a star graph $G$ with $|V(G)| \geq 3$ 
  as a Cameron--Walker graph consisting of bipartite graph $\mathcal{K}_{1,1}$ 
  with some leaf edges and without pendant triangle. 
  Hence claims for Cameron--Walker graph in the below are also true 
  for such a star graph. 
\end{Remark}
Note that for a Cameron--Walker graph $G$, 
the regularity of $K[V(G)]/I(G)$ is equal to $im(G)$ (equivalently, $m(G)$). 

\par
Let $G$ be a Cameron--Walker graph. 
In what follows we use the following labeling on vertices of $G$; 
see Figure \ref{fig:CameronWalkerGraph}: 
\[
V(G) = \bigcup_{i = 1}^{m} \left\{ x^{(i)}_{1}, \ldots, x^{(i)}_{s_{i}} \right\} \cup \{ v_{1}, \ldots, v_{m} \} \cup \{ w_{1}, \ldots, w_{n} \} \cup \left\{ \bigcup_{j = 1}^{n} \bigcup_{\ell = 1}^{t_{j}} \left\{ y^{(j)}_{\ell, 1}, y^{(j)}_{\ell, 2} \right\} \right\},
\] 
where 
$\{ v_1, \ldots, v_m \} \cup \{ w_1, \ldots, w_n \}$ is a vertex partition 
of a connected bipartite subgraph of $G$, 
$x_k^{(i)}$ ($i=1, \ldots, m$; $k=1, \ldots, s_i$) is a vertex such that 
$\left\{ v_i, x_k^{(i)} \right\}$ is a leaf edge, 
and $y_{\ell, 1}^{(j)}, y_{\ell, 2}^{(j)}$ 
($j=1, \ldots, n$; $\ell = 1, \ldots, t_j$) 
are vertices which together with $w_{j}$ form a pendant triangle. 
Note that $s_{i} \geq 1$ and $t_{j} \geq 0$. 

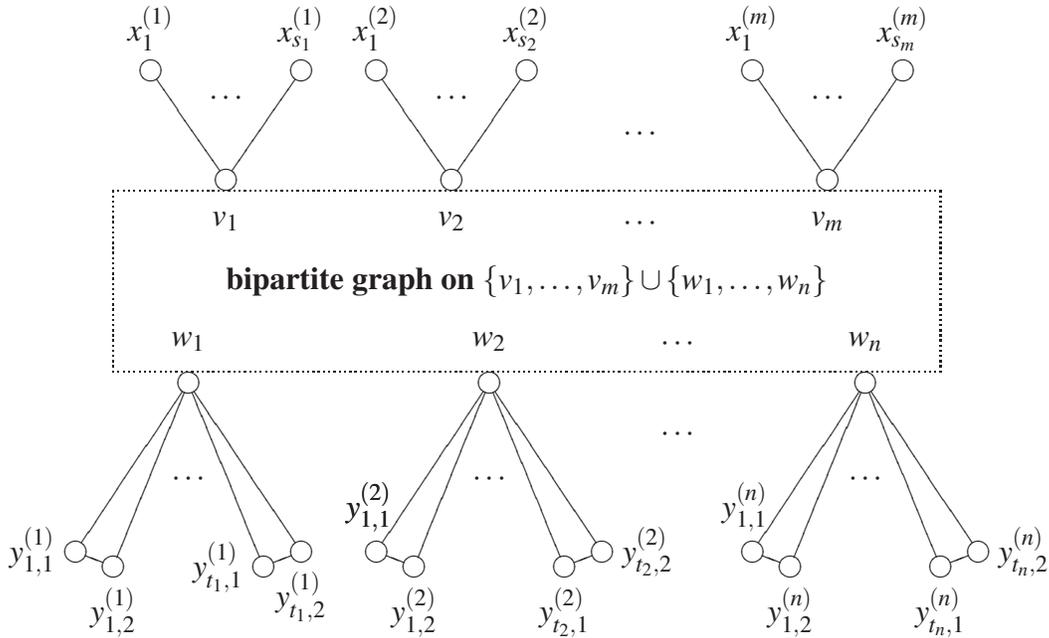
\begin{figure}[htbp]
  \centering
\begin{xy}
	\ar@{} (0,0);(20, 12)  = "A";
	\ar@{.} "A";(130, 12)  = "B";
	\ar@{.} "A";(20, -12)  = "C";
	\ar@{.} "B";(130, -12)  = "D";
	\ar@{.} "C";"D";
	\ar@{} (0,0);(75,0) *{\text{\bf{bipartite graph on} $\{v_{1}, \ldots, v_{m}\} \cup \{w_{1}, \ldots, w_{n}\}$}};
	\ar@{} "A";(35, 13.5)  *\cir<4pt>{} = "E1";
	\ar@{-} "E1";(25, 28) *++!D{x^{(1)}_{1}} *\cir<4pt>{};
	\ar@{-} "E1";(45, 28) *++!D{x^{(1)}_{s_{1}}} *\cir<4pt>{};
	\ar@{} (0,0); (35, 28) *++!U{\cdots}
	\ar@{} "A";(65, 13.5)  *\cir<4pt>{} = "E2";
	\ar@{-} "E2";(55, 28) *++!D{x^{(2)}_{1}} *\cir<4pt>{};
	\ar@{-} "E2";(75, 28) *++!D{x^{(2)}_{s_{2}}} *\cir<4pt>{};
	\ar@{} (0,0); (65, 28) *++!U{\cdots}
	\ar@{} "A";(115, 13.5)  *\cir<4pt>{} = "E";
	\ar@{-} "E";(105, 28) *++!D{x^{(m)}_{1}} *\cir<4pt>{};
	\ar@{-} "E";(125, 28) *++!D{x^{(m)}_{s_{m}}} *\cir<4pt>{};
	\ar@{} (0,0); (115, 28) *++!U{\cdots}
	\ar@{} (0,0);(35,8) *{\text{$v_{1}$}};
	\ar@{} (0,0);(65,8) *{\text{$v_{2}$}};
	\ar@{} (0,0);(90,4) *++!D{\cdots};
        \ar@{} (0,0);(115,8) *{\text{$v_{m}$}};
	\ar@{} (0,0);(90,16) *++!D{\cdots};
	\ar@{} "E1";(30, -13.5)  *\cir<4pt>{} = "F10";
	\ar@{-} "F10";(15, -36) *++!R{y^{(1)}_{1,1}} *\cir<4pt>{} = "F11";
	\ar@{-} "F10";(20, -38) *++!U{y^{(1)}_{1,2}} *\cir<4pt>{} = "F12";
	\ar@{-} "F11";"F12";
	\ar@{-} "F10";(40, -38) *++!R{y^{(1)}_{t_{1},1}} *\cir<4pt>{} = "Ft1";
	\ar@{-} "F10";(45, -36) *++!U{y^{(1)}_{t_{1},2}} *\cir<4pt>{} = "Ft2";
	\ar@{-} "Ft1";"Ft2";
	\ar@{} "E1";(70, -13.5)  *\cir<4pt>{} = "F20";
	\ar@{-} "F20";(55, -36) *\cir<4pt>{} = "F21";
	\ar@{} (0,0);(54,-30) *{\text{$y^{(2)}_{1,1}$}};
	\ar@{-} "F20";(60, -38) *++!U{y^{(2)}_{1,2}} *\cir<4pt>{} = "F22";
	\ar@{-} "F21";"F22";
	\ar@{-} "F20";(80, -38) *++!U{y^{(2)}_{t_{2},1}} *\cir<4pt>{} = "Ftt1";
	\ar@{-} "F20";(85, -36) *++!L{y^{(2)}_{t_{2},2}} *\cir<4pt>{} = "Ftt2";
	\ar@{-} "Ftt1";"Ftt2";
	\ar@{} "E1";(120, -13.5)  *\cir<4pt>{} = "Fn0";
	\ar@{-} "Fn0";(105, -36) *\cir<4pt>{} = "Fn1";
	\ar@{} (0,0);(104,-30) *{\text{$y^{(n)}_{1,1}$}};
	\ar@{-} "Fn0";(110, -38) *++!U{y^{(n)}_{1,2}} *\cir<4pt>{} = "Fn2";
	\ar@{-} "Fn1";"Fn2";
	\ar@{-} "Fn0";(130, -38) *++!U{y^{(n)}_{t_{n},1}} *\cir<4pt>{} = "Fnn1";
	\ar@{} (0,0);(54,-30) *{\text{$y^{(2)}_{1,1}$}};
	\ar@{-} "Fn0";(135, -36) *++!L{y^{(n)}_{t_{n},2}} *\cir<4pt>{} = "Fnn2";
	\ar@{-} "Fnn1";"Fnn2";
	\ar@{} (0,0);(30,-8) *{\text{$w_{1}$}};
	\ar@{} (0,0);(70,-8) *{\text{$w_{2}$}};
	\ar@{} (0,0);(95,-12) *++!D{\cdots};
	\ar@{} (0,0);(120,-8) *{\text{$w_{n}$}};
	\ar@{} (0,0);(95,-24) *++!D{\cdots};
	\ar@{} (0,0);(30,-30) *++!D{\cdots};
	\ar@{} (0,0);(70,-30) *++!D{\cdots};
	\ar@{} (0,0);(120,-30) *++!D{\cdots};
\end{xy}

\bigskip

  \caption{Cameron--Walker graph}
  \label{fig:CameronWalkerGraph}
\end{figure}

\par
We prove Theorem \ref{CW} by showing 
\begin{Proposition}
  \label{s=d}
  Let $G$ be a Cameron--Walker graph as in Figure \ref{fig:CameronWalkerGraph}. 
  Then
  \begin{equation}
    \label{eq:s=d}
    \deg h\left(K[V(G)]/I(G), \ \lambda \right) = \dim K[V(G)]/I(G) = \sum_{i = 1}^{m} s_{i} + \sum_{j = 1}^{n} \max\left\{ t_{j}, 1 \right\}. 
  \end{equation}
\end{Proposition}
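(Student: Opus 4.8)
The plan is to establish the two equalities in (\ref{eq:s=d}) separately. The right-hand one, $\dim K[V(G)]/I(G)=\sum_{i}s_i+\sum_j\max\{t_j,1\}$, is the routine part. Since $I(G)$ is the Stanley--Reisner ideal of the independence complex of $G$, the Krull dimension of $K[V(G)]/I(G)$ equals the independence number $\alpha(G)$, the largest size of an independent set. To evaluate $\alpha(G)$ I would partition $V(G)$ into the stars $P_i=\{v_i\}\cup\{x^{(i)}_1,\dots,x^{(i)}_{s_i}\}$ and the windmills $Q_j=\{w_j\}\cup\{y^{(j)}_{\ell,1},y^{(j)}_{\ell,2}:1\le\ell\le t_j\}$. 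Any independent set $A$ satisfies $|A\cap P_i|\le\alpha(G[P_i])=s_i$ and $|A\cap Q_j|\le\alpha(G[Q_j])=\max\{t_j,1\}$, which yields the upper bound; the matching lower bound comes from the explicit independent set consisting of all leaves $x^{(i)}_k$ together with, for each $j$, one endpoint of each triangle at $w_j$ when $t_j\ge1$ and the vertex $w_j$ itself when $t_j=0$. One checks directly that this set is independent, using that no $v_i$ is chosen.

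For the left-hand equality I would pin down the top coefficient of the $h$-polynomial. Let $c_k$ be the number of independent sets of $G$ of size $k$, so $c_0=1$ and $c_k=0$ for $k>d$, where $d=\dim K[V(G)]/I(G)$. The Hilbert series of the Stanley--Reisner ring is $H(K[V(G)]/I(G),\lambda)=\sum_k c_k\bigl(\lambda/(1-\lambda)\bigr)^k$, so $h(K[V(G)]/I(G),\lambda)=\sum_{k=0}^{d}c_k\lambda^k(1-\lambda)^{d-k}$. Every summand has degree at most $d$, so $\deg h\le d$ (this is the inequality $a\le0$ for squarefree monomial ideals), and the coefficient of $\lambda^d$ equals $\sum_k(-1)^{d-k}c_k=(-1)^d\,\Ic_G(-1)$, where $\Ic_G(t)=\sum_k c_k t^k$ is the independence polynomial of $G$. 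Hence it suffices to prove $\Ic_G(-1)\neq0$.

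To evaluate $\Ic_G(-1)$ I would use the deletion recursion $\Ic_G(t)=\Ic_{G\setminus v}(t)+t\,\Ic_{G\setminus N[v]}(t)$, multiplicativity over disjoint unions, and the observation that an isolated vertex contributes a factor $(1+t)$, which vanishes at $t=-1$. Apply this at a vertex $w_j$: as the bipartite part is connected, $w_j$ has a neighbour $v_i\in\{v_1,\dots,v_m\}$, and since $s_i\ge1$ the leaf $x^{(i)}_1$ becomes isolated in $G\setminus N[w_j]$; therefore $\Ic_{G\setminus N[w_j]}(-1)=0$. On the other hand $G\setminus w_j$ splits off the $t_j$ triangles of $w_j$ as isolated edges $K_2$, with $\Ic_{K_2}(-1)=1-2=-1$, so $\Ic_G(-1)=(-1)^{t_j}\Ic_{G^{(j)}}(-1)$, where $G^{(j)}$ is obtained from $G$ by deleting $w_j$ and its triangle vertices. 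Since this deletion removes no leaf and leaves every remaining $w_{j'}$ still adjacent to a leaf-bearing $v_i$, the same step applies again; iterating over $w_1,\dots,w_n$ reduces $G$ to the disjoint union $\bigsqcup_{i=1}^m K_{1,s_i}$ of stars. Finally $\Ic_{K_{1,s}}(-1)=\bigl((1+t)^s+t\bigr)\big|_{t=-1}=-1$ for $s\ge1$, so $\Ic_G(-1)=(-1)^{m+\sum_j t_j}=\pm1\neq0$, which gives $\deg h=d$ and finishes the proof.

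The main obstacle is arranging the recursion so that the contraction term $\Ic_{G\setminus N[v]}(-1)$ dies at every stage. The structural input that makes this work is precisely the defining feature of a Cameron--Walker graph: each $v_i$ carries at least one leaf, and connectivity of the bipartite part forces each $w_j$ to be adjacent to some such $v_i$, producing the isolated vertex that kills the contraction term. The points needing care are verifying that this property survives each deletion and treating the degenerate case $t_j=0$ within the same formula.
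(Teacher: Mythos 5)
Your proposal is correct, but it takes a genuinely different route from the paper's proof. The paper argues by induction on $m+n$: it runs the Hilbert--series recursion at a vertex (Lemmata \ref{HilbertSeries} and \ref{HSofEdgeIdeal}), plugs in closed-form Hilbert series of star graphs and star triangles (Lemmata \ref{Star} and \ref{StarTriangle}) together with multiplicativity over components (Lemmata \ref{HSofTensorProduct} and \ref{disconnected}), and handles four cases ($m=n=1$; $m=1,n>1$; $m>1,n=1$; $m>1,n>1$, the last split according to whether some $\{v_m,w_\ell\}$ is a leaf edge), invoking the comparison Lemma \ref{Lemma} in the final case. You instead decouple the two equalities: the dimension comes from the independence number via the partition into the stars $P_i$ and windmills $Q_j$, and the degree statement is reduced to the single scalar non-vanishing $\Ic_G(-1)\neq 0$, since the coefficient of $\lambda^d$ in $h$ equals $(-1)^d\Ic_G(-1)$ (here you should say explicitly that your polynomial $\sum_k c_k\lambda^k(1-\lambda)^{d-k}$ \emph{is} the $h$-polynomial because both equal $(1-\lambda)^dH$; this is immediate but worth a line). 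You then evaluate $\Ic_G(-1)$ by the independence-polynomial recursion, which is exactly the combinatorial shadow of Lemma \ref{HilbertSeries}, applied at the vertices $w_j$; the crucial point that the contraction term $\Ic_{G\setminus N[w_j]}(-1)$ always vanishes, because a leaf of a neighboring $v_i$ becomes isolated, is what collapses all of the paper's bookkeeping (no closed-form series, no case analysis, no comparison lemma), and the needed structural property visibly survives each deletion since only $w_j$ and its triangle vertices are removed. What each approach buys: the paper's induction produces explicit Hilbert series along the way and sets up machinery that is reused later (e.g.\ in the proof of Proposition \ref{other-ast}); your argument is shorter, isolates the conceptual reason for $a(S/I(G))=0$ (non-vanishing of the top $h$-coefficient, equivalently of the reduced Euler characteristic of the independence complex), and even gives its exact value $h_d=(-1)^{d+m+\sum_{j}t_j}$.
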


Before giving a proof of Proposition \ref{s=d}, 
several lemmata will be prepared.
Let $I \subset S$ be a monomial ideal of $S$ and let $x$ be a variable of $S$ 
which appears in some monomial belonging to the unique minimal system of 
monomial generators of $I$. 
Then, by the additivity of Hilbert series on the exact sequence 
$0 \to S/I : (x) (-1) \xrightarrow{\ \cdot x \ } S/I \to S/I + (x) \to 0$, 
one has
 
\begin{Lemma}\label{HilbertSeries}
\[ 
H\left(S/I, \ \lambda\right) = H\left(S/I + (x), \ \lambda \right) + \lambda \cdot H\left(S/I : (x), \ \lambda \right). 
\] 
\end{Lemma}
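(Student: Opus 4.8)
The plan is to read the claimed identity directly off the short exact sequence
\[
0 \to (S/I : (x))(-1) \xrightarrow{\ \cdot x \ } S/I \to S/I + (x) \to 0
\]
already displayed just before the statement, so that the work reduces to two things: (i) verifying that this is an exact sequence of finitely generated graded $S$-modules whose maps are homogeneous of degree $0$, and (ii) applying the additivity of the Hilbert series together with the effect of the degree shift.

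First I would check exactness. The map $\cdot x$ sends the residue class of $f$ to the residue class of $xf$; it is well defined because $f \in I:(x)$ means exactly $xf \in I$, so every such $f$ maps to zero in $S/I$. For the same reason the map is injective: if $xf \in I$ then $f \in I:(x)$ by the very definition of the colon ideal, so $f$ already represents zero in $S/I:(x)$. The image of $\cdot x$ is $((x)+I)/I$, and the cokernel of the inclusion of this image into $S/I$ is $S/(I+(x))$, which identifies the right-hand term. Finally, multiplication by the degree-one element $x$ raises degrees by one, so shifting the source by $(-1)$ makes $\cdot x$ homogeneous of degree $0$; with this shift every map in the sequence is graded of degree $0$.

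Next I would invoke the additivity of the Hilbert series on a graded short exact sequence: exactness in each internal degree $d$ gives $\dim_K (S/I)_d = \dim_K \bigl((S/I:(x))(-1)\bigr)_d + \dim_K \bigl(S/(I+(x))\bigr)_d$, whence
\[
H(S/I, \lambda) = H\bigl((S/I:(x))(-1), \lambda\bigr) + H(S/I + (x), \lambda).
\]
To finish I would use the shift identity $H(M(-1), \lambda) = \lambda \cdot H(M, \lambda)$, which holds because $M(-1)_d = M_{d-1}$ for all $d$; applied to $M = S/I:(x)$ this rewrites the first summand as $\lambda \cdot H(S/I:(x), \lambda)$ and yields the stated formula after transposing the two terms on the right.

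There is no serious obstacle here; the one point that genuinely requires care is the bookkeeping of the grading. The shift $(-1)$ on the source is precisely what makes $\cdot x$ degree-preserving, and it is also what produces the single factor of $\lambda$ in front of $H(S/I:(x), \lambda)$, which explains the asymmetry between the two terms on the right-hand side.
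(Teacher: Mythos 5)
Your proof is correct and follows exactly the paper's own route: the paper derives the lemma from additivity of Hilbert series on the very short exact sequence $0 \to (S/I:(x))(-1) \xrightarrow{\cdot x} S/I \to S/(I+(x)) \to 0$, which is the argument you give. Your additional verification of exactness and of the degree-shift bookkeeping simply makes explicit what the paper leaves implicit.
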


Let $G$ be a finite simple graph on the vertex set $V(G) = \{x_{1}, \ldots, x_{n}\}$ with the edge set $E(G)$. 
For $W \subset V(G)$, the {\em induced subgraph} $G_{W}$ is the subgraph of $G$ 
such that $V(G_{W}) = W$ and $E(G_{W}) = \{ \{x_{i}, x_{j}\} \in E(G) : x_{i}, x_{j} \in W \}$. 
For $x_{v} \in V(G)$, let $N_{G}(x_{v})$ denote the neighborhood of $x_{v}$ 
and let $N_{G}[x_{v}] = N_{G}(x_{v}) \cup \{x_{v}\}$. 
Then $I(G) + (x_{v}) = (x_{v}) + I\left(G_{V(G) \setminus \{x_{v}\}}\right)$ and 
$I(G) : (x_{v}) = \left(x_{i} : x_{i} \in N_{G}(x_{v})\right) + I\left(G_{V(G) \setminus N_{G}[x_{v}]}\right)$.   
Hence
\[
\frac{K[V(G)]}{I(G) + (x_{v})} \cong \frac{K[V(G) \setminus \{x_{v}\}]}{I\left(G_{V(G) \setminus \{x_{v}\}}\right)}, 
\]
\[
\frac{K[V(G)]}{I(G) : (x_{v})} \cong \frac{K[V(G) \setminus N_{G}[x_{v}]\ ]}{I\left(G_{V(G) \setminus N_{G}[x_{v}]}\right)} \otimes_{K} K[x_{v}]. 
\]
Thus, by virtue of Lemma \ref{HilbertSeries}, it follows that  

\begin{Lemma}\label{HSofEdgeIdeal}
\begin{eqnarray*}
& & H\left({K[V(G)]/I(G), \ \lambda}\right)\\
&=& H\left({\frac{K[V(G) \setminus \{x_{v}\}]}{I\left(G_{V(G) \setminus \{x_{v}\}}\right)},  \ \lambda}\right) + H\left({\frac{K[V(G) \setminus N_{G}[x_{v}]\ ]}{I\left(G_{V(G) \setminus N_{G}[x_{v}]}\right)}, \ \lambda}\right) \cdot \frac{\lambda}{1 - \lambda}. 
\end{eqnarray*}
\end{Lemma}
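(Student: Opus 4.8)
The plan is to apply Lemma \ref{HilbertSeries} directly to the ideal $I = I(G)$ and to the variable $x = x_{v}$, and then to translate the three Hilbert series occurring there into the combinatorial quotients recorded immediately above the statement. Write $S = K[V(G)]$. Provided $x_v$ lies on some edge of $G$, it divides at least one minimal monomial generator of $I(G)$, so the hypothesis of Lemma \ref{HilbertSeries} is satisfied and one has
\[
H\left(S/I(G), \ \lambda\right) = H\left(S/(I(G)+(x_v)), \ \lambda\right) + \lambda \cdot H\left(S/(I(G):(x_v)), \ \lambda\right).
\]
(This is the only hypothesis worth flagging; in every situation where the lemma is used, namely Cameron--Walker graphs, there are no isolated vertices, so it is automatic.)

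First I would handle the quotient by $I(G)+(x_v)$. The isomorphism $S/(I(G)+(x_v)) \cong K[V(G) \setminus \{x_v\}]/I(G_{V(G) \setminus \{x_v\}})$ recorded above is an isomorphism of graded $K$-algebras, since it merely deletes the variable $x_v$ and preserves degrees; hence the two rings have identical Hilbert series, and this term already coincides with the first summand of the asserted formula.

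Next I would treat the colon ideal, where the essential point is the graded isomorphism $S/(I(G):(x_v)) \cong K[V(G) \setminus N_G[x_v]]/I(G_{V(G) \setminus N_G[x_v]}) \otimes_K K[x_v]$, again established in the text preceding the statement. Using multiplicativity of the Hilbert series under tensor products over the field $K$ — that the degree-wise decomposition $(A \otimes_K B)_d = \bigoplus_{i+j=d} A_i \otimes_K B_j$ yields $H(A \otimes_K B, \lambda) = H(A, \lambda) \cdot H(B, \lambda)$ — together with $H(K[x_v], \lambda) = \sum_{d \geq 0} \lambda^d = 1/(1-\lambda)$, I obtain
\[
H\left(S/(I(G):(x_v)), \ \lambda\right) = H\!\left(\frac{K[V(G) \setminus N_G[x_v]]}{I(G_{V(G) \setminus N_G[x_v]})}, \ \lambda\right) \cdot \frac{1}{1-\lambda}.
\]
Substituting this into the displayed recursion, so that the factor $\lambda$ coming from Lemma \ref{HilbertSeries} combines with the factor $1/(1-\lambda)$ into $\lambda/(1-\lambda)$, produces exactly the claimed identity.

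I do not expect any genuine obstacle: the statement is a mechanical assembly of Lemma \ref{HilbertSeries} with the two quotient descriptions. The only two points requiring a word of care are that both isomorphisms are isomorphisms of \emph{graded} $K$-algebras, so that they identify Hilbert series term by term, and that the Hilbert series is multiplicative under $\otimes_K$; both are standard facts about finitely generated graded algebras over a field, and I would simply record them before combining the three pieces.
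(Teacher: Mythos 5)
Your proposal is correct and follows exactly the paper's route: the paper derives the lemma by combining Lemma \ref{HilbertSeries} with the two graded isomorphisms $S/(I(G)+(x_v)) \cong K[V(G)\setminus\{x_v\}]/I\left(G_{V(G)\setminus\{x_v\}}\right)$ and $S/(I(G):(x_v)) \cong K[V(G)\setminus N_G[x_v]]/I\left(G_{V(G)\setminus N_G[x_v]}\right) \otimes_K K[x_v]$, with the factor $\lambda/(1-\lambda)$ arising from the degree shift times $H(K[x_v],\lambda) = 1/(1-\lambda)$, precisely as you do. Your explicit flagging of the hypothesis that $x_v$ lies on an edge (so that Lemma \ref{HilbertSeries} applies) is a minor point left implicit in the paper but harmless, since it holds in all applications.
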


The following lemma is somewhat technical. 

\begin{Lemma}\label{Lemma}
Let $G$ be a finite simple graph and let $x_{v} \in V(G)$. 
Assume that 
\begin{enumerate}
	\item $\displaystyle \deg h\left({\frac{K[V(G) \setminus \{x_{v}\}]}{I\left(G_{V(G) \setminus \{x_{v}\}}\right)},  \ \lambda}\right) < \dim \frac{K[V(G) \setminus \{x_{v}\}]}{I\left(G_{V(G) \setminus \{x_{v}\}}\right)} =: d$\ $;$ 
	\item $\displaystyle \deg h\left({\frac{K[V(G) \setminus N_{G}[x_{v}]\ ]}{I\left(G_{V(G) \setminus N_{G}[x_{v}]}\right)}, \ \lambda}\right) = \dim \frac{K[V(G) \setminus N_{G}[x_{v}]\ ]}{I\left(G_{V(G) \setminus N_{G}[x_{v}]}\right)} =: d'$\ $;$ 
	\item $d > d'$. 
\end{enumerate}
Then $\displaystyle \deg h\left(K[V(G)]/I(G), \ \lambda \right) = \dim K[V(G)]/I(G) = d$. 
\end{Lemma}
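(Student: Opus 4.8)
The plan is to read the Hilbert series of $R := K[V(G)]/I(G)$ off Lemma \ref{HSofEdgeIdeal} and keep careful track of the numerator and of its value at $\lambda = 1$. Write $R_1 := K[V(G)\setminus\{x_v\}]/I(G_{V(G)\setminus\{x_v\}})$ and $R_2 := K[V(G)\setminus N_G[x_v]]/I(G_{V(G)\setminus N_G[x_v]})$, and let $h_1(\lambda), h_2(\lambda)$ be their $h$-polynomials, so that $H(R_i,\lambda) = h_i(\lambda)/(1-\lambda)^{\dim R_i}$ with $\dim R_1 = d$ and $\dim R_2 = d'$. By Lemma \ref{HSofEdgeIdeal},
\[
H(R,\lambda) = \frac{h_1(\lambda)}{(1-\lambda)^d} + \frac{\lambda\, h_2(\lambda)}{(1-\lambda)^{d'+1}}.
\]
Since hypothesis (3) gives $d \geq d'+1$, I would clear denominators over $(1-\lambda)^d$ to get $H(R,\lambda) = P(\lambda)/(1-\lambda)^d$ with
\[
P(\lambda) = h_1(\lambda) + \lambda\, h_2(\lambda)\,(1-\lambda)^{d-d'-1}.
\]

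First I would compute $\deg P$. Hypothesis (1) gives $\deg h_1 < d$, so the first summand contributes nothing in degree $d$. Hypothesis (2) gives $\deg h_2 = d'$, so the second summand has degree $1 + d' + (d-d'-1) = d$, and its degree-$d$ coefficient is the product of the (nonzero) leading coefficient of $h_2$ with $(-1)^{d-d'-1}$, hence nonzero. Therefore $\deg P = d$.

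Next I would pin down $\dim R$ and the degree of the $h$-polynomial by evaluating $P(1)$. The crucial input is that, for a nonzero standard graded algebra, the value of its $h$-polynomial at $\lambda = 1$ is its multiplicity, hence strictly positive; in particular $h_1(1) > 0$ and $h_2(1) > 0$. The factor $(1-\lambda)^{d-d'-1}$ evaluated at $\lambda = 1$ equals $0$ if $d > d'+1$ and equals $1$ if $d = d'+1$; accordingly $P(1)$ equals either $h_1(1)$ or $h_1(1) + h_2(1)$, and in both cases $P(1) > 0$. Since the order of the pole of $H(R,\lambda)$ at $\lambda = 1$ is the Krull dimension, the nonvanishing $P(1) \neq 0$ forces $\dim R = d$. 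Moreover, because $P(1) \neq 0$, the expression $P(\lambda)/(1-\lambda)^d$ is already reduced, so $P$ is the $h$-polynomial of $R$ and $\deg h(R,\lambda) = \deg P = d$, as desired.

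The only delicate point, and the step I would treat most carefully, is the nonvanishing $P(1) \neq 0$: it rests on the positivity of $h_1(1)$ and $h_2(1)$ as multiplicities of nonzero algebras, together with correctly handling the vanishing versus nonvanishing of $(1-\lambda)^{d-d'-1}$ at $\lambda = 1$ according to whether $d = d'+1$ or $d > d'+1$. Hypotheses (1) and (2) are exactly what make the two summands of $P$ occupy disjoint ranges at the top (degree $< d$ versus degree $= d$), which is what guarantees the clean conclusion $\deg h(R,\lambda) = \dim R = d$.
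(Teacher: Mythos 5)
Your proof is correct and takes essentially the same approach as the paper: the paper's own proof is just the one-line assertion that the lemma follows from Lemma \ref{HSofEdgeIdeal}, and your argument---putting both terms over the common denominator $(1-\lambda)^d$, reading off $\deg P = d$ from hypotheses (1) and (2), and ruling out cancellation at $\lambda = 1$ via positivity of $h_1(1)$ and $h_2(1)$ as multiplicities---is exactly the computation that one-liner alludes to. Your case split $d = d'+1$ versus $d > d'+1$ and the resulting $P(1) > 0$ correctly supply the details the paper leaves implicit.
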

\begin{proof}
It follows from Lemma \ref{HSofEdgeIdeal}. 
\end{proof}

By using Lemma \ref{HSofEdgeIdeal} again, 
one has the Hilbert series of $K[V(G)]/I(G)$ 
when $G$ is a star graph or a star triangle. 
For $s \geq 1$, we denote by $G^{{\rm star}(x_{v})}_{s}$, 
the star graph joining $s$ paths of length $1$ at the common vertex $x_{v}$; 
see Figure \ref{fig:Star}. 

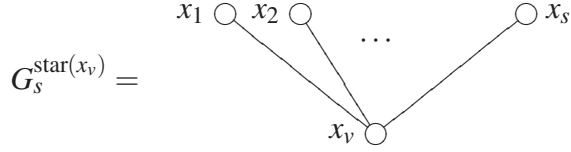
\begin{figure}[htbp]
  \centering
\bigskip

\begin{xy}
	\ar@{} (0,0);(35,-8) *{\text{$G^{{\rm star}(x_{v})}_{s} =$}};
	\ar@{} (0,0);(75, -16)  *++! R{x_{v}} *\cir<4pt>{} = "C"
	\ar@{-} "C";(55, 0) *++! R{x_{1}} *\cir<4pt>{} = "D";
	\ar@{-} "C";(65, 0) *++!R{x_{2}} *\cir<4pt>{} = "E";
	\ar@{} "C"; (75, 0) *++!U{\cdots}
	\ar@{-} "C";(95, 0) *++!L{x_{s}} *\cir<4pt>{} = "F";
\end{xy}

\bigskip

  \caption{The star graph $G^{{\rm star}(x_{v})}_{s}$}
  \label{fig:Star}
\end{figure}

\begin{Lemma}
\label{Star}
Let $s \geq 1$ be an integer. Then
%
\begin{displaymath}
  H \left(K[V(G^{{\rm star}(x_{v})}_{s})]/I(G^{{\rm star}(x_{v})}_{s}), \ \lambda \right) 
  = \frac{1 + \lambda (1 - \lambda)^{s - 1}}{(1 - \lambda)^{s}}. 
\end{displaymath} 
In particular, 
\[
\deg h\left(K[V(G^{{\rm star}(x_{v})}_{s})]/I(G^{{\rm star}(x_{v})}_{s}), \ \lambda \right) = \dim K[V(G^{{\rm star}(x_{v})}_{s})]/I(G^{{\rm star}(x_{v})}_{s}) = s.
\] 
\end{Lemma}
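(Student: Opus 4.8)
The plan is to compute the Hilbert series in a single application of Lemma~\ref{HSofEdgeIdeal}, taking the splitting vertex to be the center $x_v$, and then to read off both $\dim$ and $\deg h$ directly from the resulting rational expression; no induction on $s$ is needed.

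First I would apply Lemma~\ref{HSofEdgeIdeal} to $G = G^{{\rm star}(x_{v})}_{s}$ with the distinguished vertex $x_v$. This is legitimate because $x_v$ occurs in every generator $x_v x_k$ of $I(G^{{\rm star}(x_{v})}_{s})$. The two induced subgraphs appearing on the right-hand side are immediate to identify. Deleting $x_v$ leaves the edgeless graph on $\{x_1,\ldots,x_s\}$, whose edge ideal is zero, so that quotient is the polynomial ring $K[x_1,\ldots,x_s]$ and
\[
H\!\left(\frac{K[V(G)\setminus\{x_v\}]}{I\!\left(G_{V(G)\setminus\{x_v\}}\right)},\ \lambda\right)=\frac{1}{(1-\lambda)^{s}}.
\]
On the other hand $N_G[x_v]=V(G)$, so $V(G)\setminus N_G[x_v]=\emptyset$, the corresponding quotient is $K$, and its Hilbert series equals $1$.

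Substituting these two values into Lemma~\ref{HSofEdgeIdeal} gives
\[
H\!\left(K[V(G)]/I(G),\ \lambda\right)=\frac{1}{(1-\lambda)^{s}}+1\cdot\frac{\lambda}{1-\lambda},
\]
and combining the two summands over the common denominator $(1-\lambda)^{s}$ produces exactly $\dfrac{1+\lambda(1-\lambda)^{s-1}}{(1-\lambda)^{s}}$, which is the asserted formula.

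For the ``in particular'' statement I would then verify that this rational function is already written in the normalized shape $h(S/I,\lambda)/(1-\lambda)^{\dim S/I}$. It suffices to check that the numerator $1+\lambda(1-\lambda)^{s-1}$ is coprime to $1-\lambda$: evaluating it at $\lambda=1$ yields $1\neq0$, so $(1-\lambda)$ does not divide it, whence $\dim K[V(G)]/I(G)=s$ and the numerator is the $h$-polynomial. Its leading term is $(-1)^{s-1}\lambda^{s}$, which is nonzero, so $\deg h=s$, giving $\deg h=\dim=s$ as required. There is essentially no obstacle here; the only point demanding a moment of care is precisely this coprimality check, which guarantees that the exponent $s$ of the denominator really is the Krull dimension rather than an artefact of an uncancelled factor.
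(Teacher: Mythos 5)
Your proof is correct and follows essentially the same route as the paper, which obtains this lemma precisely by applying Lemma~\ref{HSofEdgeIdeal} at the center vertex $x_v$ (the paper leaves the computation implicit). The only microscopic slip is that for $s=1$ the numerator evaluates to $2$ (not $1$) at $\lambda=1$, but it is nonzero in all cases, so your coprimality argument and the conclusion $\deg h = \dim = s$ stand.
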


For $t \geq 1$, we denote by $G^{{\triangle}(x_{v})}_{t}$, 
the star triangle joining $t$ triangles at the common vertex $x_{v}$; 
see Figure \ref{fig:StarTriangle}. 

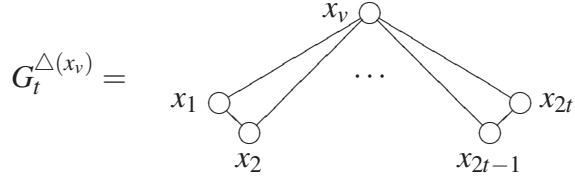
\begin{figure}[htbp]
  \centering
\bigskip

\begin{xy}
	\ar@{} (0,0);(35,-8) *{\text{$G^{\triangle (x_{v})}_{t} =$}};
	\ar@{} (0,0);(75, 0) *++! R{x_{v}} *\cir<4pt>{} = "C"
	\ar@{-} "C";(55, -12) *++!R{x_{1}} *\cir<4pt>{} = "D";
	\ar@{-} "C";(59, -16) *++!U{x_{2}} *\cir<4pt>{} = "E";
	\ar@{-} "D";"E";
	\ar@{} "C"; (75, -5) *++!U{\cdots}
	\ar@{-} "C";(95, -12) *++!L{x_{2t}} *\cir<4pt>{} = "F";
	\ar@{-} "C";(91, -16) *++!U{x_{2t-1}} *\cir<4pt>{} = "G";
	\ar@{-} "F";"G"; 
\end{xy}

\bigskip

  \caption{The star triangle $G^{{\triangle}(x_{v})}_{t}$}
  \label{fig:StarTriangle}
\end{figure}

\begin{Lemma}
\label{StarTriangle}
Let $t \geq 1$ be an integer. Then
%
\[
H\left(K[V(G^{\triangle (x_{v})}_{t})]/I(G^{\triangle (x_{v})}_{t}), \ \lambda \right) = \frac{(1 + \lambda)^{t} + \lambda (1 - \lambda)^{t - 1}}{(1 - \lambda)^{t}}. 
\]
In particular, 
\begin{equation*}
	\deg h\left(K[V(G^{\triangle (x_{v})}_{t})]/I(G^{\triangle (x_{v})}_{t}), \ \lambda \right) = \begin{cases} 
		t & \text{(t $:$ odd)} \\
		t - 1 & \text{(t $:$ even)} \end{cases} 
\end{equation*}
and $\displaystyle \dim K[V(G^{\triangle (x_{v})}_{t})]/I(G^{\triangle (x_{v})}_{t}) = t$. 
\end{Lemma}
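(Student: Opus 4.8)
The plan is to apply the recursion of Lemma \ref{HSofEdgeIdeal} with the splitting vertex taken to be the central vertex $x_v$. The key observation is that $x_v$ is adjacent to every other vertex of $G^{\triangle(x_v)}_t$, so both induced subgraphs appearing in the recursion are especially simple, and the whole computation reduces to the Hilbert series of a disjoint union of edges.

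First I would identify the induced subgraph $G_{V(G) \setminus \{x_v\}}$. Deleting $x_v$ removes every edge through the center and leaves exactly the $t$ base edges $\{x_{2i-1}, x_{2i}\}$, which are pairwise disjoint. Hence $K[V(G) \setminus \{x_v\}]/I(G_{V(G)\setminus\{x_v\}})$ is the tensor product over $K$ of $t$ copies of $K[x,y]/(xy)$. Since a single edge gives $H(K[x,y]/(xy), \lambda) = (1+\lambda)/(1-\lambda)$, multiplicativity of the Hilbert series over tensor products yields
\[
H\left(\frac{K[V(G)\setminus\{x_v\}]}{I(G_{V(G)\setminus\{x_v\}})}, \ \lambda\right) = \frac{(1+\lambda)^t}{(1-\lambda)^t}.
\]
Next, since $N_G[x_v] = V(G)$, the second induced subgraph $G_{V(G) \setminus N_G[x_v]}$ is the graph on the empty vertex set, so its quotient ring is $K$, with Hilbert series $1$. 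Substituting both into Lemma \ref{HSofEdgeIdeal} gives
\[
H\left(K[V(G)]/I(G), \ \lambda\right) = \frac{(1+\lambda)^t}{(1-\lambda)^t} + 1 \cdot \frac{\lambda}{1-\lambda} = \frac{(1+\lambda)^t + \lambda(1-\lambda)^{t-1}}{(1-\lambda)^t},
\]
which is the asserted closed form.

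Finally, reading off the dimension and the degree of the $h$-polynomial amounts to analyzing the numerator $P(\lambda) = (1+\lambda)^t + \lambda(1-\lambda)^{t-1}$ of this fraction. Evaluating at $\lambda = 1$ gives $P(1) \geq 2^t > 0$, so $(1-\lambda)$ does not divide $P(\lambda)$; hence $h(K[V(G)]/I(G), \lambda) = P(\lambda)$ and $\dim K[V(G)]/I(G) = t$. To pin down $\deg h$, I would inspect the top coefficients of $P$. Both summands have degree $t$, with $\lambda^t$-coefficients $1$ and $(-1)^{t-1}$ respectively, so their sum is $1 + (-1)^{t-1}$: this equals $2$ when $t$ is odd, giving $\deg h = t$, but vanishes when $t$ is even, forcing one to pass to the $\lambda^{t-1}$-coefficient. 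A short binomial computation shows that coefficient to be $\binom{t}{t-1} + (t-1)(-1)^{t-2} = t + (t-1) = 2t-1 \neq 0$ in the even case, giving $\deg h = t-1$.

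I expect no serious obstacle here; the only point requiring genuine care is the parity-dependent cancellation of the leading coefficient of $P(\lambda)$, which is exactly what produces the case distinction in the statement.
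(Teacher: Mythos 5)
Your proposal is correct and follows essentially the same route as the paper: the paper also obtains this Hilbert series by applying Lemma \ref{HSofEdgeIdeal} (splitting at the central vertex $x_v$, so that deletion leaves $t$ disjoint edges and the closed neighborhood covers everything), with the dimension and parity-dependent degree then read off from the numerator $(1+\lambda)^t + \lambda(1-\lambda)^{t-1}$ exactly as you do.
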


We also use the following lemmata. 
\begin{Lemma}[{\cite[Lemma 1.5(i)]{HT}}]
\label{HSofTensorProduct} 
Let $S_{1}$ and $S_{2}$ be 
polynomial rings over a field $K$.  
Let $I_{1}$ be a nonzero homogeneous ideal of $S_{1}$ and $I_2$ that of $S_2$.  
Write $S$ for $S_{1} \otimes_{K} S_{2}$ 
and regard $I_{1} + I_{2}$ as homogeneous ideals of $S$. Then
\[
H\left(S/I_{1} + I_{2}, \ \lambda\right) = H\left(S_{1}/I_{1}, \ \lambda\right) \cdot  H\left(S_{2}/I_{2}, \ \lambda\right).  
\]
In particular, 
\[
\deg h\left(S/I_{1} + I_{2}, \ \lambda\right) = \deg h\left(S_{1}/I_{1}, \ \lambda\right) + \deg h\left(S_{2}/I_{2}, \ \lambda\right), 
\]
\[
\dim S/I_{1} + I_{2} = \dim S_{1}/I_{1} + \dim S_{2}/I_{2}. 
\]
\end{Lemma}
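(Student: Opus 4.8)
The plan is to reduce everything to the elementary fact that Hilbert series are multiplicative under tensor product over $K$. First I would observe that, since $S = S_{1} \otimes_{K} S_{2}$, there is an isomorphism of graded $K$-algebras
\[
S/(I_{1} + I_{2}) \cong (S_{1}/I_{1}) \otimes_{K} (S_{2}/I_{2}),
\]
the grading on the right-hand side being the total degree. This is standard: $I_{1} + I_{2}$ is generated inside $S$ by generators of $I_{1}$ and of $I_{2}$ separately, so passing to the quotient divides out each tensor factor independently.

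Next I would compute the graded pieces directly. For graded $K$-algebras $A = \bigoplus_{i} A_{i}$ and $B = \bigoplus_{j} B_{j}$ one has $(A \otimes_{K} B)_{d} = \bigoplus_{i + j = d} A_{i} \otimes_{K} B_{j}$, whence $\dim_{K}(A \otimes_{K} B)_{d} = \sum_{i + j = d} (\dim_{K} A_{i})(\dim_{K} B_{j})$. This is precisely the Cauchy product of the two Hilbert functions, so the associated Hilbert series multiply. Applied to the isomorphism above, this yields the first displayed equality $H(S/(I_{1} + I_{2}), \lambda) = H(S_{1}/I_{1}, \lambda) \cdot H(S_{2}/I_{2}, \lambda)$.

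For the two ``in particular'' assertions I would pass to reduced form. Writing $H(S_{i}/I_{i}, \lambda) = h(S_{i}/I_{i}, \lambda)/(1 - \lambda)^{d_{i}}$ with $d_{i} = \dim S_{i}/I_{i}$, multiplicativity gives
\[
H(S/(I_{1} + I_{2}), \lambda) = \frac{h(S_{1}/I_{1}, \lambda) \cdot h(S_{2}/I_{2}, \lambda)}{(1 - \lambda)^{d_{1} + d_{2}}}.
\]
It then remains to verify that the numerator $h(S_{1}/I_{1}, \lambda) \cdot h(S_{2}/I_{2}, \lambda)$ really is the $h$-polynomial of $S/(I_{1} + I_{2})$, i.e.\ that it is not divisible by $1 - \lambda$; this is the one point requiring care. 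By the definition of the $h$-polynomial together with the fact that the pole order of $H(S_{i}/I_{i}, \lambda)$ at $\lambda = 1$ equals exactly $d_{i}$, each factor is a nonzero polynomial in $\ZZ[\lambda]$ with $h(S_{i}/I_{i}, 1) \neq 0$. Hence the product at $\lambda = 1$ is a product of two nonzero integers and is therefore nonzero, so the pole order of the product is exactly $d_{1} + d_{2}$, giving $\dim S/(I_{1} + I_{2}) = d_{1} + d_{2}$. Finally, since the leading coefficients of the two $h$-polynomials are nonzero and $\ZZ$ is an integral domain, degrees add under multiplication, which yields $\deg h(S/(I_{1} + I_{2}), \lambda) = \deg h(S_{1}/I_{1}, \lambda) + \deg h(S_{2}/I_{2}, \lambda)$ and completes the argument.
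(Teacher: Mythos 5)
Your proof is correct. Note that the paper itself offers no argument for this lemma at all: it is quoted verbatim from Hoa--Tam \cite[Lemma 1.5(i)]{HT}, so your write-up is a self-contained replacement rather than a variant of anything in the text. Your route is the standard one: the isomorphism $S/(I_{1}+I_{2}) \cong (S_{1}/I_{1}) \otimes_{K} (S_{2}/I_{2})$, the Cauchy-product computation of graded pieces giving multiplicativity of Hilbert series, and then the reduction to $h$-polynomials. You correctly isolated the only delicate point, namely that $h(S_{1}/I_{1},\lambda)\cdot h(S_{2}/I_{2},\lambda)$ is genuinely the $h$-polynomial of the product, i.e.\ not divisible by $1-\lambda$ and sitting over the correct power of $(1-\lambda)$. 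Your justification via the fact that the pole order of $H(S_{i}/I_{i},\lambda)$ at $\lambda=1$ equals $\dim S_{i}/I_{i}$, equivalently $h(S_{i}/I_{i},1)\neq 0$, is exactly the standard characterization (\cite[Proposition 4.4.1]{BH}), and from $h(S_{1}/I_{1},1)\,h(S_{2}/I_{2},1)\neq 0$ both the dimension formula and the additivity of degrees follow as you state. No gaps.
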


Let $G$ be a disconnected graph whose connected components are 
$G_{1}, \ldots, G_{r}$. 
Then $I(G) = \sum_{i = 1}^{r} I(G_{i})$. 
Thus, by virtue of Lemma \ref{HSofTensorProduct}, one has 

\begin{Lemma}
\label{disconnected}
Under the notation as above, 
\[
\deg h\left(K[V(G)]/I(G), \ \lambda \right) = \sum_{i = 1}^{r} \deg h\left(K[V(G_{i}]/I(G_{i}), \ \lambda \right), 
\]
\[
\dim K[V(G)]/I(G) = \sum_{i = 1}^{r} \dim K[V(G_{i})]/I(G_{i}),  
\]
here we regard $K[V(G_{i})]/I(G_{i})$ as a 1-dimensional polynomial ring 
if $G_{i}$ is an isolated vertex. 
\end{Lemma}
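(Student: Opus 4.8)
\emph{The plan is to} identify the quotient ring $K[V(G)]/I(G)$ with an iterated tensor product over the connected components of $G$ and then apply Lemma~\ref{HSofTensorProduct} repeatedly. First I would record the structural fact quoted just above: since every edge of $G$ joins two vertices lying in the same connected component, the minimal monomial generators of $I(G)$ split according to the components, so that $I(G) = \sum_{i=1}^{r} I(G_i)$ inside $S = K[V(G)] = K[V(G_1)] \otimes_K \cdots \otimes_K K[V(G_r)]$. Consequently $K[V(G)]/I(G) \cong \bigotimes_{i=1}^{r} K[V(G_i)]/I(G_i)$ as graded $K$-algebras, which reduces the claim to a statement about Hilbert series of tensor products.

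Next I would run an induction on the number $r$ of components, the base case $r=1$ being vacuous. For the inductive step, grouping the first $r-1$ components into a single factor, it suffices to treat $r=2$: given graphs $G_1, G_2$ on disjoint vertex sets, with $S_i = K[V(G_i)]$, one has $K[V(G)]/I(G) \cong S_1/I(G_1) \otimes_K S_2/I(G_2)$. When both edge ideals are nonzero, Lemma~\ref{HSofTensorProduct} applies verbatim and yields simultaneously the multiplicativity of the Hilbert series, the additivity $\deg h(S/I(G),\lambda) = \deg h(S_1/I(G_1),\lambda) + \deg h(S_2/I(G_2),\lambda)$ of the degrees of the $h$-polynomials, and the additivity of the dimensions. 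Iterating across all components then produces the two displayed formulas.

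The one point requiring care --- and the only real obstacle --- is the hypothesis in Lemma~\ref{HSofTensorProduct} that the ideals be nonzero, which fails precisely when a component $G_i$ is an isolated vertex, for then $I(G_i) = 0$. In that case $K[V(G_i)]/I(G_i) = K[x]$ is the one-dimensional polynomial ring with Hilbert series $1/(1-\lambda)$, hence $h$-polynomial equal to $1$, so $\deg h = 0$ and $\dim = 1$; this is exactly the convention stipulated in the statement. To incorporate such factors I would argue directly rather than through the cited lemma: multiplicativity of the Hilbert series over a tensor product of standard graded algebras holds with no restriction on the ideals, so $H(K[V(G)]/I(G),\lambda) = \prod_{i=1}^{r} H(K[V(G_i)]/I(G_i),\lambda)$ in all cases. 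Writing each factor as $h_i(\lambda)/(1-\lambda)^{d_i}$ with $d_i = \dim K[V(G_i)]/I(G_i)$, the order of the pole at $\lambda=1$ equals the Krull dimension, so $h_i(1) \neq 0$ for every $i$ (including $h_i = 1$ for an isolated vertex). Therefore the product $\prod_i h_i(\lambda)$ does not vanish at $\lambda=1$, which guarantees that no cancellation occurs against $(1-\lambda)^{\sum_i d_i}$; hence $\sum_i d_i$ is the true dimension and $\prod_i h_i$ the true $h$-polynomial of $K[V(G)]/I(G)$. Taking degrees gives $\deg h(K[V(G)]/I(G),\lambda) = \sum_i \deg h_i$ and $\dim K[V(G)]/I(G) = \sum_i d_i$, as required.
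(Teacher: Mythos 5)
Your proof is correct and follows essentially the same route as the paper: decompose $I(G) = \sum_{i=1}^{r} I(G_i)$ over the connected components, identify $K[V(G)]/I(G)$ with the tensor product of the $K[V(G_i)]/I(G_i)$, and apply Lemma~\ref{HSofTensorProduct}. You are in fact somewhat more careful than the paper, which cites that lemma directly even though its stated hypothesis (both ideals nonzero) fails literally for isolated-vertex components; your direct argument via multiplicativity of Hilbert series and non-vanishing of each $h_i$ at $\lambda = 1$ cleanly covers the case that the paper handles only by the convention appended to the lemma's statement.
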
  

Now we are in the position to prove Proposition \ref{s=d}. 

\begin{proof}[Proof of Proposition \ref{s=d}] 
Let $G$ be a Cameron--Walker graph as in Figure \ref{fig:CameronWalkerGraph}.  
We prove the equality (\ref{eq:s=d}) 
by using induction on $m + n$. 

\par
First, we assume that $m + n = 2$.  
Then $m = n = 1$. 
If $t_{1} = 0$, then $G = G^{{\rm star}(v_{1})}_{s_{1} + 1}$. 
Hence the equality (\ref{eq:s=d}) follows by Lemma \ref{Star}. 
Next assume $t_{1} > 0$ . 
We will show 
\[
\deg h\left(K[V(G)]/I(G), \ \lambda \right) = \dim K[V(G)]/I(G) = s_{1} + t_{1}.
\] 
Note that 
\begin{itemize}
\item  $G_{V(G) \setminus \{v_{1}\}}$ consists of $s_{1}$ isolated vertices 
  and a star triangle $G^{\triangle (w_{1})}_{t_{1}}$; 
\item $G_{V(G) \setminus N_{G}[v_{1}]}$ consists of $t_{1}$ star graphs 
  $G^{{\rm star}(y^{(1)}_{1, 1})}_{1}, \ldots, G^{{\rm star}(y^{(1)}_{t_{1}, 1})}_{1}$; 
\end{itemize}
see Figure \ref{fig:m=n=1}. 

\begin{figure}[htbp]
  \centering

\bigskip

\begin{xy}
	\ar@{} (0,0);(40, 0) *\dir<4pt>{*} = "B";
	\ar@{.} "B";(40, -8) *++!L{w_{1}} *\cir<4pt>{} = "C";
	\ar@{-} "C";(20, -20) *++!R{y_{1, 1}^{(1)}} *\cir<4pt>{} = "D";
	\ar@{-} "C";(24, -24) *++!U{y_{1, 2}^{(1)}} *\cir<4pt>{} = "E";
	\ar@{-} "D";"E";
	\ar@{} (0,0); (40, -13) *++!U{\cdots}
	\ar@{-} "C";(60, -20) *++!L{y_{t_{1}, 2}^{(1)}} *\cir<4pt>{} = "F";
	\ar@{-} "C";(56, -24) *++!U{y_{t_{1}, 1}^{(1)}} *\cir<4pt>{} = "G";
	\ar@{-} "F";"G"; 
	\ar@{.} "B";(25, 15) *++!R{x_{1}^{(1)}} *\cir<4pt>{};
	\ar@{.} "B";(55, 15) *++!R{x_{s_{1}}^{(1)}} *\cir<4pt>{};
	\ar@{} (0,0); (40, 12) *++!U{\cdots}
	\ar@{} (0,0);(110, 0) *\dir<4pt>{*} = "B2";
	\ar@{.} "B2";(110, -8)  *\dir<4pt>{*} = "C2";
	\ar@{.} "C2";(90, -20) *++!R{y_{1, 1}^{(1)}} *\cir<4pt>{} = "D2";
	\ar@{.} "C2";(94, -24) *++!U{y_{1, 2}^{(1)}} *\cir<4pt>{} = "E2";
	\ar@{-} "D2";"E2";
	\ar@{} (0,0); (110, -13) *++!U{\cdots}
	\ar@{.} "C2";(130, -20) *++!L{y_{t_{1}, 2}^{(1)}} *\cir<4pt>{} = "F2";
	\ar@{.} "C2";(126, -24) *++!U{y_{t_{1}, 1}^{(1)}} *\cir<4pt>{} = "G2";
	\ar@{-} "F2";"G2"; 
	\ar@{.} "B2";(95, 15)  *\dir<4pt>{*};
	\ar@{.} "B2";(125, 15)  *\dir<4pt>{*};
	\ar@{} (0,0); (110, 12) *++!U{\cdots}
	\ar@{} (0,0);(40,-38) *{\text{$G_{V(G) \setminus \{v_{1}\}}$}};
	\ar@{} (0,0);(110,-38) *{\text{$G_{V(G) \setminus N_{G}[v_{1}]}$}};
\end{xy}
  \caption{$G_{V(G) \setminus \{v_{1}\}}$ (left) and $G_{V(G) \setminus N_{G}[v_{1}]}$ (right)}
  \label{fig:m=n=1}
\end{figure}
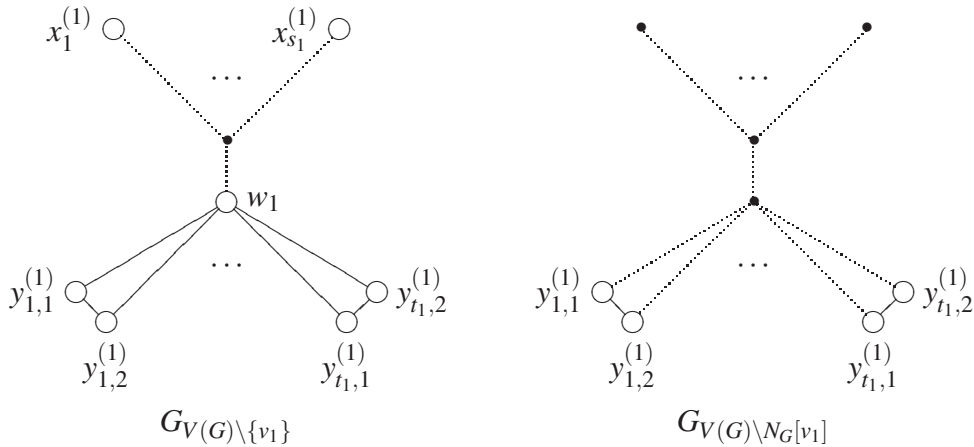
\bigskip

Hence, by using Lemmata \ref{Star}, \ref{StarTriangle} and \ref{HSofTensorProduct}, one has 

\[
H\left({\frac{K[V(G) \setminus \{v_{1}\}]}{I\left(G_{V(G) \setminus \{v_{1}\}}\right)},  \ \lambda}\right) = \frac{(1 + \lambda)^{t_{1}} + \lambda(1 - \lambda)^{t_{1} - 1}}{(1 - \lambda)^{s_{1} + t_{1}}}
\]
and 
\[
H\left({\frac{K[V(G) \setminus N_{G}[v_{1}]\ ]}{I\left(G_{V(G) \setminus N_{G}[v_{1}]}\right)}, \ \lambda}\right) = \frac{(1 + \lambda)^{t_{1}}}{(1 - \lambda)^{t_{1}}}. 
\]
Thus, by virtue of Lemma \ref{HSofEdgeIdeal}, it follows that  
\begin{eqnarray*}
H(K[V(G)]/I(G), \ \lambda) &=& \frac{(1 + \lambda)^{t_{1}} + \lambda(1 - \lambda)^{t_{1} - 1}}{(1 - \lambda)^{s_{1} + t_{1}}} + \frac{(1 + \lambda)^{t_{1}}}{(1 - \lambda)^{t_{1}}} \cdot \frac{\lambda}{1 - \lambda} \\
&=& \frac{(1 + \lambda)^{t_{1}} + \lambda(1 - \lambda)^{t_{1} - 1} + \lambda(1 + \lambda)^{t_{1}}(1 - \lambda)^{s_{1} - 1}}{(1 - \lambda)^{s_{1} + t_{1}}}. 
\end{eqnarray*}
Therefore one has $\deg h\left(K[V(G)]/I(G), \ \lambda \right) = \dim K[V(G)]/I(G) = s_{1} + t_{1}$, as desired.
%
%

\bigskip

Next, we assume that $m + n > 2$. 

{\bf (First Step.)} Let $m = 1$ and $n > 1$. 
Suppose that there exists $1 \leq \ell \leq n$ such that $t_{\ell} = 0$. 
We may assume $\ell = n$. 
Then we will show 
\[
\deg h\left(K[V(G)]/I(G), \ \lambda \right) = \dim K[V(G)]/I(G) = s_{1} + \sum_{j = 1}^{n - 1}  \max\{t_{j}, 1\} + 1.  
\]
Since $t_{n} = 0$, $\{ v_{1}, w_{n} \}$ is a leaf edge. 
Hence we can regard $G$ as a Cameron--Walker graph such that its bipartite part is 
the star graph $G^{{\rm star}(v_{1})}_{n - 1}$ and 
the vertex $v_{1}$ has $s_{1} + 1$ leaf edges. 
Thus, by induction hypothesis, one has
\[
\deg h\left(K[V(G)]/I(G), \ \lambda \right) = \dim K[V(G)]/I(G) = s_{1} + 1 + \sum_{j = 1}^{n - 1} \max\{t_{j}, 1\},  
\] 
as desired. 

Next, suppose that $t_{j} > 0$ for all $1 \leq j \leq n$. 
We will show 
\[
\deg h\left(K[V(G)]/I(G), \ \lambda \right) = \dim K[V(G)]/I(G) = s_{1} + \sum_{j = 1}^{n} t_{j}.
\] 
Note that 
\begin{itemize}
\item $G_{V(G) \setminus \{v_{1}\}}$ consists of $s_{1}$ isolated vertices and 
  $n$ star triangles $G^{\triangle (w_{1})}_{t_{1}}, \ldots, G^{\triangle (w_{n})}_{t_{n}}$, 
\item $G_{V(G) \setminus N_{G}[v_{1}]}$ consists of $\sum_{j = 1}^{n} t_{j}$ star graphs 
  $G^{{\rm star}(y^{(k)}_{\ell, 1})}_{1}$ for $1 \leq k \leq n$ and $1 \leq \ell \leq t_{k}$; 
\end{itemize}
see Figure \ref{fig:m=1;n>1}. 

\begin{figure}[htbp]
  \centering

  \bigskip

\begin{xy}
	\ar@{} (0,0);(35,-65) *{\text{$G_{V(G) \setminus \{v_{1}\}}$}};	
	\ar@{} (0,0);(35, 0)  *\dir<4pt>{*} = "E1";
	\ar@{.} "E1";(25, 10) *++!D{x_{1}} *\cir<4pt>{};
	\ar@{.} "E1";(45, 10) *++!D{x_{s_{1}}} *\cir<4pt>{};
	\ar@{} (0,0); (35, 6) *++!D{\cdots}
	\ar@{} (0,0); (35, -16) *++!D{\cdots}
	\ar@{} "E1";(20, -13.5)  *\cir<4pt>{} = "F10";
	\ar@{-} "F10";(8, -46) *\cir<4pt>{} = "F11";
	\ar@{} (0,0);(6,-40) *{\text{$y^{(1)}_{1,1}$}};
	\ar@{-} "F10";(13, -48) *++!U{y^{(1)}_{1,2}} *\cir<4pt>{} = "F12";
	\ar@{-} "F11";"F12";
	\ar@{-} "F10";(27, -48) *++!R{y^{(1)}_{t_{1},1}} *\cir<4pt>{} = "Ft1";
	\ar@{-} "F10";(32, -46) *++!U{y^{(1)}_{t_{1},2}} *\cir<4pt>{} = "Ft2";
	\ar@{-} "Ft1";"Ft2";
	\ar@{} "E1";(50, -13.5)  *\cir<4pt>{} = "Fn0";
	\ar@{-} "Fn0";(38, -46) *\cir<4pt>{} = "Fn1";
	\ar@{} (0,0);(36,-40) *{\text{$y^{(n)}_{1,1}$}};
	\ar@{-} "Fn0";(43, -48) *++!U{y^{(n)}_{1,2}} *\cir<4pt>{} = "Fn2";
	\ar@{-} "Fn1";"Fn2";
	\ar@{-} "Fn0";(57, -48) *++!U{y^{(n)}_{t_{n},1}} *\cir<4pt>{} = "Fnn1";
	\ar@{-} "Fn0";(62, -46) *++!L{y^{(n)}_{t_{n},2}} *\cir<4pt>{} = "Fnn2";
	\ar@{-} "Fnn1";"Fnn2";
	\ar@{} (0,0);(20,-8) *{\text{$w_{1}$}};
	\ar@{} (0,0);(50,-8) *{\text{$w_{n}$}};
	\ar@{} (0,0);(20,-40) *++!D{\cdots};
	\ar@{} (0,0);(50,-40) *++!D{\cdots};
	\ar@{.} "F10";"E1";
	\ar@{.} "Fn0";"E1";
	
	\ar@{} (0,0);(115,-65) *{\text{$G_{V(G) \setminus N_{G}[v_{1}]}$}};	
	\ar@{} (0,0);(115, 0)  *\dir<4pt>{*} = "E1r";
	\ar@{.} "E1r";(105, 10)  *\dir<4pt>{*};
	\ar@{.} "E1r";(125, 10)  *\dir<4pt>{*};
	\ar@{} (0,0); (115, 6) *++!D{\cdots}
	\ar@{} (0,0); (115, -16) *++!D{\cdots}
	\ar@{} "E1r";(100, -13.5)  *\dir<4pt>{*} = "F10r";
	\ar@{.} "F10r";(88, -46) *\cir<4pt>{} = "F11r";
	\ar@{} (0,0);(86,-40) *{\text{$y^{(1)}_{1,1}$}};
	\ar@{.} "F10r";(93, -48) *++!U{y^{(1)}_{1,2}} *\cir<4pt>{} = "F12r";
	\ar@{-} "F11r";"F12r";
	\ar@{.} "F10r";(107, -48) *++!R{y^{(1)}_{t_{1},1}} *\cir<4pt>{} = "Ft1r";
	\ar@{.} "F10r";(112, -46) *++!U{y^{(1)}_{t_{1},2}} *\cir<4pt>{} = "Ft2r";
	\ar@{-} "Ft1r";"Ft2r";
	\ar@{} "E1r";(130, -13.5)  *\dir<4pt>{*} = "Fn0r";
	\ar@{.} "Fn0r";(118, -46) *\cir<4pt>{} = "Fn1r";
	\ar@{} (0,0);(116,-40) *{\text{$y^{(n)}_{1,1}$}};
	\ar@{.} "Fn0r";(123, -48) *++!U{y^{(n)}_{1,2}} *\cir<4pt>{} = "Fn2r";
	\ar@{-} "Fn1r";"Fn2r";
	\ar@{.} "Fn0r";(137, -48) *++!U{y^{(n)}_{t_{n},1}} *\cir<4pt>{} = "Fnn1r";
	\ar@{.} "Fn0r";(142, -46) *++!L{y^{(n)}_{t_{n},2}} *\cir<4pt>{} = "Fnn2r";
	\ar@{-} "Fnn1r";"Fnn2r";
	\ar@{} (0,0);(100,-40) *++!D{\cdots};
	\ar@{} (0,0);(130,-40) *++!D{\cdots};
	\ar@{.} "F10r";"E1r";
	\ar@{.} "Fn0r";"E1r";

\end{xy}

\bigskip

  \caption{$G_{V(G) \setminus \{v_{1}\}}$ (left) and $G_{V(G) \setminus N_{G}[v_{1}]}$ (right)}
  \label{fig:m=1;n>1}
\end{figure}
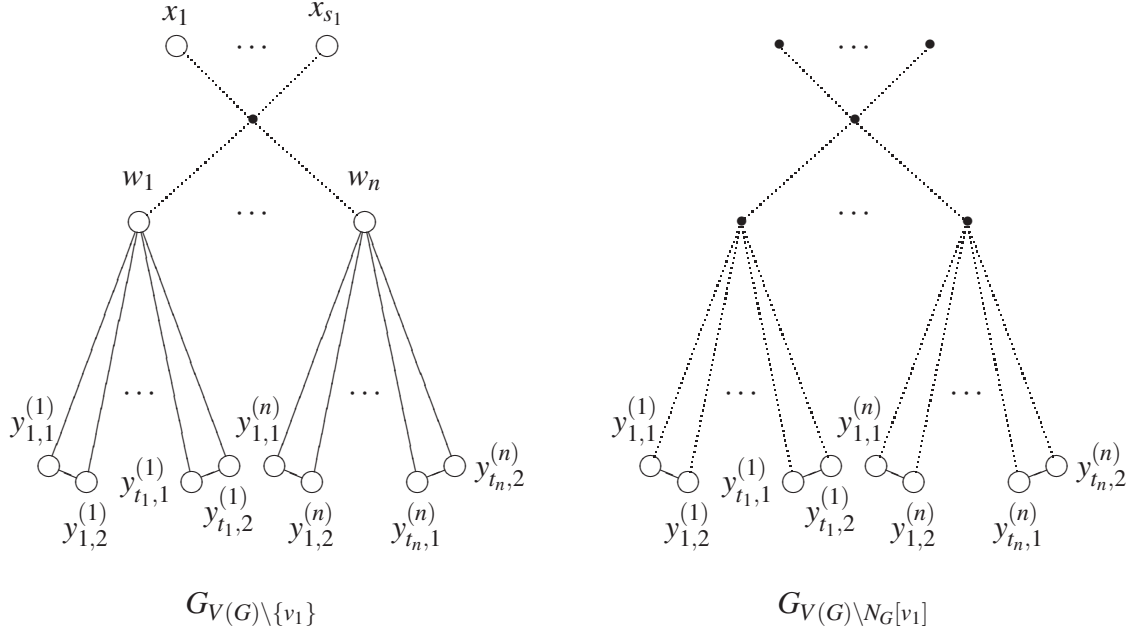

Hence, by using Lemmata \ref{Star}, \ref{StarTriangle} and \ref{HSofTensorProduct}, one has 

\[
H\left({\frac{K[V(G) \setminus \{v_{1}\}]}{I\left(G_{V(G) \setminus \{v_{1}\}}\right)},  \ \lambda}\right) = \frac{\prod_{j = 1}^{n} \left\{(1 + \lambda)^{t_{j}} + \lambda(1 - \lambda)^{t_{j} - 1} \right\}}{(1 - \lambda)^{s_{1} + \sum_{j = 1}^{n} t_{j}}}
\]
and 
\[
H\left({\frac{K[V(G) \setminus N_{G}[v_{1}]\ ]}{I\left(G_{V(G) \setminus N_{G}[v_{1}]}\right)}, \ \lambda}\right) = \frac{(1 + \lambda)^{\sum_{j = 1}^{n}t_{j}}}{(1 - \lambda)^{\sum_{j = 1}^{n}t_{j}}}. 
\]
%
%
Thus, by virtue of Lemma \ref{HSofEdgeIdeal}, it follows that  
\begin{eqnarray*}
& & H(K[V(G)]/I(G), \ \lambda) \\
&=& \frac{\prod_{j = 1}^{n} \left\{(1 + \lambda)^{t_{j}} + \lambda(1 - \lambda)^{t_{j} - 1} \right\}}{(1 - \lambda)^{s_{1} + \sum_{j = 1}^{n} t_{j}}}  +  \frac{(1 + \lambda)^{\sum_{j = 1}^{n}t_{j}}}{(1 - \lambda)^{\sum_{j = 1}^{n}t_{j}}} \cdot \frac{\lambda}{1 - \lambda} \\
&=& \frac{\prod_{j = 1}^{n} \left\{(1 + \lambda)^{t_{j}} + \lambda(1 - \lambda)^{t_{j} - 1} \right\} + \lambda(1 + \lambda)^{ \sum_{j = 1}^{n} t_{j} }(1 - \lambda)^{s_{1} - 1}}{(1 - \lambda)^{s_{1} + \sum_{j = 1}^{n} t_{j}}}. 
\end{eqnarray*}
Therefore one has 
\[
\deg h\left(K[V(G)]/I(G), \ \lambda \right) = \dim K[V(G)]/I(G) = s_{1} + \sum_{j = 1}^{n} t_{j}, 
\]
as desired.

\bigskip

{\bf (Second Step.)} Let $m > 1$ and $n = 1$. 
We will show 
\[
\deg h\left(K[V(G)]/I(G), \ \lambda \right) = \dim K[V(G)]/I(G) = \sum_{i = 1}^{m} s_{i} + \max\{t_{1}, 1\}.  
\]
Note that
\begin{itemize}
\item $G_{V(G) \setminus \{w_{1}\}}$ consists of $m + t_{1}$ star graphs $G^{{\rm star}(v_{1})}_{s_{1}}, \ldots, G^{{\rm star}(v_{m})}_{s_{m}}$ 
and $G^{{\rm star}(y^{(1)}_{1, 1})}_{1}, \ldots, G^{{\rm star}(y^{(1)}_{t_{1}, 1})}_{1}$, 
\item $G_{V(G) \setminus N_{G}[w_{1}]}$ consists of $\sum_{i = 1}^{m} s_{i}$ isolated vertices; 
\end{itemize}
see Figure \ref{fig:m>1;n=1}. 

\begin{figure}[htbp]
  \centering

\bigskip

\begin{xy}
	\ar@{} (0,0);(35,-45) *{\text{$G_{V(G) \setminus \{w_{1}\}}$}};	
	\ar@{} (0,0);(15, 0)  *\cir<4pt>{} = "E1";
	\ar@{-} "E1";(10, 8) *++!D{x^{(1)}_{1}} *\cir<4pt>{};
	\ar@{-} "E1";(20, 8) *++!D{x^{(1)}_{s_{1}}} *\cir<4pt>{};
	\ar@{} (0,0); (15, 11) *++!U{\cdots}
	\ar@{} (0,0);(55, 0)  *\cir<4pt>{} = "E";
	\ar@{-} "E";(50, 8) *++!D{x^{(m)}_{1}} *\cir<4pt>{};
	\ar@{-} "E";(60, 8) *++!D{x^{(m)}_{s_{m}}} *\cir<4pt>{};
	\ar@{} (0,0); (55, 11) *++!U{\cdots}
	\ar@{} (0,0);(15,-4) *{\text{$v_{1}$}};
	\ar@{} (0,0);(35, -4) *++!D{\cdots};
	\ar@{} (0,0);(55,-4) *{\text{$v_{m}$}};
	\ar@{} "E1";(35, -13.5)  *\dir<4pt>{*} = "F10";
	\ar@{.} "F10";(20, -30) *++!R{y^{(1)}_{1,1}} *\cir<4pt>{} = "F11";
	\ar@{.} "F10";(25, -32) *++!U{y^{(1)}_{1,2}} *\cir<4pt>{} = "F12";
	\ar@{-} "F11";"F12";
	\ar@{.} "F10";(45, -32) *++!R{y^{(1)}_{t_{1}, 1}} *\cir<4pt>{} = "Ft1";
	\ar@{.} "F10";(50, -30) *++!L{y^{(1)}_{t_{1}, 2}} *\cir<4pt>{} = "Ft2";
	\ar@{-} "Ft1";"Ft2";
	\ar@{} (0,0);(35,-28) *++!D{\cdots};
	\ar@{.} "E1";"F10";
	\ar@{.} "E";"F10";
	
	\ar@{} (0,0);(115,-45) *{\text{$G_{V(G) \setminus N_{G}[w_{1}]}$}};	
	\ar@{} (0,0);(95, 0)  *\dir<4pt>{*} = "E1r";
	\ar@{.} "E1r";(90, 8) *++!D{x^{(1)}_{1}} *\cir<4pt>{};
	\ar@{.} "E1r";(100, 8) *++!D{x^{(1)}_{s_{1}}} *\cir<4pt>{};
	\ar@{} (0,0); (95, 11) *++!U{\cdots}
	\ar@{} (0,0);(135, 0)  *\dir<4pt>{*} = "Er";
	\ar@{.} "Er";(130, 8) *++!D{x^{(m)}_{1}} *\cir<4pt>{};
	\ar@{.} "Er";(140, 8) *++!D{x^{(m)}_{s_{m}}} *\cir<4pt>{};
	\ar@{} (0,0); (135, 11) *++!U{\cdots}
	\ar@{} (0,0);(115, -4) *++!D{\cdots};
	\ar@{} "E1r";(115, -13.5)  *\dir<4pt>{*} = "F10r";
	\ar@{.} "F10r";(100, -30)  *\dir<4pt>{*} = "F11r";
	\ar@{.} "F10r";(105, -32)  *\dir<4pt>{*} = "F12r";
	\ar@{.} "F11r";"F12r";
	\ar@{.} "F10r";(125, -32)  *\dir<4pt>{*} = "Ft1r";
	\ar@{.} "F10r";(130, -30) *\dir<4pt>{*} = "Ft2r";
	\ar@{.} "Ft1r";"Ft2r";
	\ar@{} (0,0);(115,-28) *++!D{\cdots};
	\ar@{.} "E1r";"F10r";
	\ar@{.} "Er";"F10r";

\end{xy}

\bigskip

  \caption{$G_{V(G) \setminus \{w_{1}\}}$ (left) and $G_{V(G) \setminus N_{G}[w_{1}]}$ (right)}
  \label{fig:m>1;n=1}
\end{figure}

Hence, by using Lemmata \ref{Star} and \ref{HSofTensorProduct}, one has 

\[
H\left({\frac{K[V(G) \setminus \{w_{1}\}]}{I\left(G_{V(G) \setminus \{w_{1}\}}\right)},  \ \lambda}\right) = \frac{ \prod_{i = 1}^{m} \left\{ 1 + \lambda(1 - \lambda)^{s_{i} - 1} \right\} \cdot (1 + \lambda)^{t_{1}} }{ (1 - \lambda)^{\sum_{i = 1}^{m} s_{i} + t_{1}} }
\]
and 
\[
H\left({\frac{K[V(G) \setminus N_{G}[w_{1}]\ ]}{I\left(G_{V(G) \setminus N_{G}[w_{1}]}\right)}, \ \lambda}\right) = \frac{ 1 }{ (1 - \lambda)^{\sum_{i = 1}^{m} s_{i}} }. 
\]
Thus, by virtue of Lemma \ref{HSofEdgeIdeal}, it follows that  

\begin{eqnarray*}
& & H\left(K[V(G)]/I(G),\ \lambda \right) \\
&=& \frac{ \prod_{i = 1}^{m} \left\{ 1 + \lambda(1 - \lambda)^{s_{i} - 1} \right\} \cdot (1 + \lambda)^{t_{1}} }{ (1 - \lambda)^{\sum_{i = 1}^{m} s_{i} + t_{1}} }  +  \frac{ 1 }{ (1 - \lambda)^{\sum_{i = 1}^{m} s_{i}} } \cdot \frac{\lambda}{1 - \lambda} \\
&=& \frac{ \prod_{i = 1}^{m} \left\{ 1 + \lambda(1 - \lambda)^{s_{i} - 1} \right\} \cdot (1 + \lambda)^{t_{1}} }{ (1 - \lambda)^{\sum_{i = 1}^{m} s_{i} + t_{1}} }  +  \frac{ \lambda }{ (1 - \lambda)^{\sum_{i = 1}^{m} s_{i} + 1} } \\
&=& \frac{ \prod_{i = 1}^{m} \left\{ 1 + \lambda(1 - \lambda)^{s_{i} - 1} \right\} \cdot (1 + \lambda)^{t_{1}}(1 - \lambda)^{\max\{t_{1}, 1\} - t_{1}} + \lambda(1 - \lambda)^{\max\{t_{1}, 1\} - 1} }{ (1 - \lambda)^{\sum_{i = 1}^{m} s_{i} + \max\{t_{1}, 1\}} }.  
\end{eqnarray*}
Hence $\deg h\left(K[V(G)]/I(G), \ \lambda \right) = \sum_{i = 1}^{m} s_{i} + t_{1} + \max\{t_{1}, 1\} - t_{1} = \sum_{i = 1}^{m} s_{i} + \max\{t_{1}, 1\}$. 
Therefore, one has
\[
\deg h\left(K[V(G)]/I(G), \ \lambda \right) = \dim K[V(G)]/I(G) = \sum_{i = 1}^{m} s_{i} + \max\{t_{1}, 1\}, 
\]
as desired.

\bigskip

{\bf (Third Step.)} Let $m > 1$ and $n > 1$. 
Suppose that there exists $1 \leq \ell \leq n$ 
such that $\{v_m, w_{\ell}\}$ is a leaf edge. 
We may assume $\ell = n$. 
Then $t_{n} = 0$. 
We will show 
\[
\deg h\left(K[V(G)]/I(G), \ \lambda \right) = \dim K[V(G)]/I(G) = \sum_{i = 1}^{m} s_{i} + \sum_{j = 1}^{n - 1} \max\{ t_{j}, 1\} + 1. 
\]
Note that we can regard $G$ as a Cameron--Walker graph such that 
its bipartite part has bipartition 
$\{ v_1, \ldots, v_m \} \cup \{ w_1, \ldots, w_{n-1} \}$, 
the vertex $v_{i}$ has $s_{i}$ leaf edges for all $1 \leq i \leq m - 1$ and 
the vertex $v_{m}$ has $s_{m} + 1 $ leaf edges. 
Thus, by induction hypothesis, one has
\[
\deg h\left(K[V(G)]/I(G), \ \lambda \right) = \dim K[V(G)]/I(G) = \sum_{i = 1}^{m} s_{i} + 1 + \sum_{j = 1}^{n - 1} \max\{t_{j}, 1 \},  
\] 
as desired. 

\par
Next, suppose that $\{v_{m}, w_{\ell}\}$ is not a leaf edge 
for all $1\leq \ell \leq n$. 
Then $G_{V(G) \setminus \{v_{m}\}}$ consists of 
\begin{enumerate}
\item[(a1)] $s_{m}$ isolated vertices $x_{1}^{(m)}, \ldots, x_{s_m}^{(m)}$; 
\item[(a2)] star graphs $G^{{\rm star}(v_{i})}_{s_{i} + \alpha_{i}}$ for 
  $1 \leq i \leq m-1$ with 
  $N(v_i) \cap \{ w_1, \ldots, w_n \} 
     =: \{ w_{j_1}, \ldots, w_{j_{\alpha_i}} \}$ 
  satisfying $N(w_{j_k}) \subset \{ v_i, v_m \}$ for any 
  $k = 1, \ldots, \alpha_i$; 
\item[(a3)] star triangles $G^{\triangle (w_{j})}_{t_{j}}$ 
  for $1 \leq j \leq n$ 
  with $N(w_{j}) \cap \{v_{1}, \ldots, v_{m}\} = \{v_{m}\}$; 
\item[(a4)] some Cameron--Walker induced subgraphs. 
\end{enumerate}
We give an example after the proof; see Example \ref{ex:3Step}. 

\par
Note that each graph of type (a2) can be considered as 
a Cameron--Walker induced subgraph. 
Also note that each induced star graph $G^{{\rm star}(v_{i})}_{s_{i}}$ 
(resp.\  induced pendant triangle $G^{\triangle (w_{j})}_{t_{j}}$) 
appears in (a2) or (a4) (resp.\  (a3) or (a4)) as a (sub)graph. 
Hence by virtue of Lemmata \ref{Star}, \ref{StarTriangle}, \ref{disconnected} and induction hypothesis, one has  
\begin{eqnarray*}
\deg h\left({\frac{K[V(G) \setminus \{v_{m}\}]}{I\left(G_{V(G) \setminus \{v_{m}\}}\right)},  \ \lambda}\right) 
&\leq& \sum_{i = 1}^{m - 1} s_{i} + \sum_{j = 1}^{n} \max\left\{t_{j}, 1\right\}  
\end{eqnarray*}
and
\begin{eqnarray*}
\dim \frac{K[V(G) \setminus \{v_{m}\}]}{I\left(G_{V(G) \setminus \{v_{m}\}}\right)} 
&=& \sum_{i = 1}^{m - 1} s_{i} + \sum_{j = 1}^{n} \max\left\{t_{j}, 1\right\} + s_{m} \\
&=& \sum_{i = 1}^{m} s_{i} + \sum_{j = 1}^{n} \max\left\{t_{j}, 1\right\}. 
\end{eqnarray*}

On the other hand, $G_{V(G) \setminus N_{G}[v_{m}]}$ consists of 
\begin{enumerate}
\item[(b1)] star graphs $G^{{\rm star}(v_{i})}_{s_{i}}$ for $1 \leq i \leq m-1$ 
  with $N(v_i) \cap \{ w_1, \ldots, w_n \} \subset N(v_m)$;
\item[(b2)] star graphs $G^{{\rm star}(y^{(j)}_{\ell, 1})}_{1}$ 
  for $1 \leq j \leq n$ with $\{ v_{m}, w_{j} \} \in E(G)$ and $1 \leq \ell \leq t_{j}$. 
\item[(b3)] some Cameron--Walker induced subgraphs; 
\end{enumerate}
see Example \ref{ex:3Step}.

\par
Note that each induced star graph $G^{{\rm star}(v_{i})}_{s_{i}}$ 
appears in (b1) or (b3) as a (sub)graph. 
Also note that the star graphs 
$G^{{\rm star}(y^{(j)}_{\ell, 1})}_{1}$, $1 \leq \ell \leq t_j$ 
of type (b2) are the edges of the pendant triangle 
$G^{\triangle (w_{j})}_{t_{j}}$ and the total contributions of these graphs 
to the degree of $h$-polynomial and the dimension are both $t_j$. 
Hence, by virtue of Lemmata \ref{Star}, \ref{disconnected} and induction hypothesis, it follows that 

\begin{eqnarray*}
& & \deg h\left({\frac{K[V(G) \setminus N_{G}[v_{m}]\ ]}{I\left(G_{V(G) \setminus N_{G}[v_{m}]\ }\right)},  \ \lambda}\right) 
= \dim \frac{K[V(G) \setminus N_{G}[v_{m}]\ ]}{I\left(G_{V(G) \setminus N_{G}[v_{m}]\ } \right)} \\
&=& \sum_{i = 1}^{m - 1} s_{i} + \sum_{1 \leq j \leq n \atop \{v_{m}, w_{j}\} \not\in E(G) } \max\{ t_{j}, 1 \} + \sum_{1 \leq j \leq n \atop \{v_{m}, w_{j}\} \in E(G) } t_{j} \\
&<& \sum_{i = 1}^{m} s_{i} + \sum_{j = 1}^{n} \max\left\{t_{j}, 1\right\} = \dim \frac{K[V(G) \setminus \{v_{m}\}]}{I\left(G_{V(G) \setminus \{v_{m}\}}\right)}. 
\end{eqnarray*}
Thus Lemma \ref{Lemma} says that 
\[
\deg h\left(K[V(G)]/I(G), \lambda \right) = \dim K[V(G)]/I(G) = \sum_{i = 1}^{m}s_{i} + \sum_{j = 1}^{n} \max\left\{t_{j}, 1\right\}, 
\] 
as desired. 
\end{proof}

We give an example of Cameron--Walker graph with $m>1$ and $n>1$ 
which would be helpful to understand (Third Step.) of the proof of 
Proposition \ref{s=d}. 
\begin{Example}
  \label{ex:3Step}  
  Let $G$ be the following Cameron--Walker graph:  

\bigskip

\begin{xy}
	\ar@{} (0,0);(15,-10) *{\text{$G = $}};
	\ar@{} (0,0);(50, 0)  *\cir<4pt>{} = "V1";
	\ar@{} (0,0);(80, 0)  *\cir<4pt>{} = "V2";
	\ar@{} (0,0);(110, 0) *++!L{v_{m} = v_{3}} *\cir<4pt>{} = "V3";
	\ar@{} (0,0);(35, -20)  *\cir<4pt>{} = "W1";
	\ar@{} (0,0);(65, -20)  *\cir<4pt>{} = "W2";
	\ar@{} (0,0);(95, -20)  *\cir<4pt>{} = "W3";
	\ar@{} (0,0);(125, -20)  *\cir<4pt>{} = "W4";
	\ar@{-} "V1";"W1";
	\ar@{-} "V2";"W2";
	\ar@{-} "V2";"W3";
	\ar@{-} "V3";"W1";
	\ar@{-} "V3";"W2";
	\ar@{-} "V3";"W4";
	\ar@{-} "V1";(50, 10) *\cir<4pt>{};
	\ar@{-} "V1";(40, 10) *\cir<4pt>{}; 
	\ar@{-} "V1";(60, 10) *\cir<4pt>{};
	\ar@{-} "V2";(80, 10) *\cir<4pt>{};
	\ar@{-} "V3";(110, 10) *\cir<4pt>{};
	\ar@{-} "V3";(100, 10) *\cir<4pt>{}; 
	\ar@{-} "V3";(120, 10) *\cir<4pt>{};
	\ar@{-} "W2";(60, -30) *\cir<4pt>{} = "T21";
	\ar@{-} "W2";(70, -30) *\cir<4pt>{} = "T22";
	\ar@{-} "T21";"T22";
	\ar@{-} "W3";(92, -30) *\cir<4pt>{} = "T31";
	\ar@{-} "W3";(84, -30) *\cir<4pt>{} = "T32";
	\ar@{-} "W3";(98, -30) *\cir<4pt>{} = "T33";
	\ar@{-} "W3";(106, -30) *\cir<4pt>{} = "T34";
	\ar@{-} "T31";"T32";
	\ar@{-} "T33";"T34";
	\ar@{-} "W4";(122, -30) *\cir<4pt>{} = "T41";
	\ar@{-} "W4";(114, -30) *\cir<4pt>{} = "T42";
	\ar@{-} "W4";(128, -30) *\cir<4pt>{} = "T43";
	\ar@{-} "W4";(136, -30) *\cir<4pt>{} = "T44";
	\ar@{-} "T41";"T42";
	\ar@{-} "T43";"T44";
\end{xy}

\bigskip

\par \noindent 
Then the induced subgraph $G_{V(G) \setminus \{v_{m}\}}$ is as follows. 

\bigskip

\begin{xy}
	\ar@{} (0,0);(15,-10) *{\text{$G_{V(G) \setminus \{v_{m}\}} = $}};
	\ar@{} (0,0);(50, 0)  *\cir<4pt>{} = "V1";
	\ar@{} (0,0);(80, 0)  *\cir<4pt>{} = "V2";
	\ar@{} (0,0);(110, 0) *\dir<4pt>{*} = "V3";
	\ar@{} (0,0);(35, -20)  *\cir<4pt>{} = "W1";
	\ar@{} (0,0);(65, -20)  *\cir<4pt>{} = "W2";
	\ar@{} (0,0);(95, -20)  *\cir<4pt>{} = "W3";
	\ar@{} (0,0);(125, -20)  *\cir<4pt>{} = "W4";
	\ar@{-} "V1";"W1";
	\ar@{-} "V2";"W2";
	\ar@{-} "V2";"W3";
	\ar@{.} "V3";"W1";
	\ar@{.} "V3";"W2";
	\ar@{.} "V3";"W4";
	\ar@{-} "V1";(50, 10) *\cir<4pt>{};
	\ar@{-} "V1";(40, 10) *\cir<4pt>{}; 
	\ar@{-} "V1";(60, 10) *\cir<4pt>{};
	\ar@{-} "V2";(80, 10) *\cir<4pt>{};
	\ar@{.} "V3";(110, 10) *\cir<4pt>{};
	\ar@{.} "V3";(100, 10) *\cir<4pt>{}; 
	\ar@{.} "V3";(120, 10) *\cir<4pt>{};
	\ar@{-} "W2";(60, -30) *\cir<4pt>{} = "T21";
	\ar@{-} "W2";(70, -30) *\cir<4pt>{} = "T22";
	\ar@{-} "T21";"T22";
	\ar@{-} "W3";(92, -30) *\cir<4pt>{} = "T31";
	\ar@{-} "W3";(84, -30) *\cir<4pt>{} = "T32";
	\ar@{-} "W3";(98, -30) *\cir<4pt>{} = "T33";
	\ar@{-} "W3";(106, -30) *\cir<4pt>{} = "T34";
	\ar@{-} "T31";"T32";
	\ar@{-} "T33";"T34";
	\ar@{-} "W4";(122, -30) *\cir<4pt>{} = "T41";
	\ar@{-} "W4";(114, -30) *\cir<4pt>{} = "T42";
	\ar@{-} "W4";(128, -30) *\cir<4pt>{} = "T43";
	\ar@{-} "W4";(136, -30) *\cir<4pt>{} = "T44";
	\ar@{-} "T41";"T42";
	\ar@{-} "T43";"T44";
\end{xy}

\bigskip

\par \noindent
Also the induced subgraph $G_{V(G) \setminus N_{G}[v_{m}]}$ is as follows. 

\bigskip

\begin{xy}
	\ar@{} (0,0);(15,-10) *{\text{$G_{V(G) \setminus N_{G}[v_{m}]} = $}};
	\ar@{} (0,0);(50, 0)  *\cir<4pt>{} = "V1";
	\ar@{} (0,0);(80, 0)  *\cir<4pt>{} = "V2";
	\ar@{} (0,0);(110, 0) *\dir<4pt>{*} = "V3";
	\ar@{} (0,0);(35, -20)  *\dir<4pt>{*} = "W1";
	\ar@{} (0,0);(65, -20)  *\dir<4pt>{*} = "W2";
	\ar@{} (0,0);(95, -20)  *\cir<4pt>{} = "W3";
	\ar@{} (0,0);(125, -20)  *\dir<4pt>{*} = "W4";
	\ar@{.} "V1";"W1";
	\ar@{.} "V2";"W2";
	\ar@{-} "V2";"W3";
	\ar@{.} "V3";"W1";
	\ar@{.} "V3";"W2";
	\ar@{.} "V3";"W4";
	\ar@{-} "V1";(50, 10) *\cir<4pt>{};
	\ar@{-} "V1";(40, 10) *\cir<4pt>{}; 
	\ar@{-} "V1";(60, 10) *\cir<4pt>{};
	\ar@{-} "V2";(80, 10) *\cir<4pt>{};
	\ar@{.} "V3";(110, 10) *\dir<4pt>{*};
	\ar@{.} "V3";(100, 10) *\dir<4pt>{*}; 
	\ar@{.} "V3";(120, 10) *\dir<4pt>{*};
	\ar@{.} "W2";(60, -30) *\cir<4pt>{} = "T21";
	\ar@{.} "W2";(70, -30) *\cir<4pt>{} = "T22";
	\ar@{-} "T21";"T22";
	\ar@{-} "W3";(92, -30) *\cir<4pt>{} = "T31";
	\ar@{-} "W3";(84, -30) *\cir<4pt>{} = "T32";
	\ar@{-} "W3";(98, -30) *\cir<4pt>{} = "T33";
	\ar@{-} "W3";(106, -30) *\cir<4pt>{} = "T34";
	\ar@{-} "T31";"T32";
	\ar@{-} "T33";"T34";
	\ar@{.} "W4";(122, -30) *\cir<4pt>{} = "T41";
	\ar@{.} "W4";(114, -30) *\cir<4pt>{} = "T42";
	\ar@{.} "W4";(128, -30) *\cir<4pt>{} = "T43";
	\ar@{.} "W4";(136, -30) *\cir<4pt>{} = "T44";
	\ar@{-} "T41";"T42";
	\ar@{-} "T43";"T44";
\end{xy}

\bigskip

\end{Example}


\section{Cameron--Walker graphs with the equality $(*)$}
\label{sec:(ast)}
As noted in Introduction, for an arbitrary finite simple graph $G$, 
one has 
\[
   \deg h\left(S/I(G), \ \lambda \right) - \reg\left( S/I(G) \right) 
       \leq \dim S/I(G) - \depth\left( S/I(G) \right), 
\]
where we set $S = K[V(G)]$. 
Then it is natural to ask for which graph $G$ satisfies the equality: 
\[
(*) \ \deg h\left(S/I(G), \ \lambda \right) - \reg\left( S/I(G) \right) 
       = \dim S/I(G) - \depth\left( S/I(G) \right). 
\]
Recall that the equality $(\ast)$ holds if and only if 
$S/I(G)$ has a unique extremal Betti number. 
Hence when $I(G)$ has a pure resolution 
(\cite[p.\  153]{BH}), the equality $(\ast)$ holds. 
Moreover by (\cite[Lemma 3]{BiHe}), 
it follows that the equality $(\ast)$ holds 
if $S/I(G)$ is Cohen--Macaulay. 

\par
In this section, we give a classification of Cameron--Walker graphs $G$ 
with the equality $(*)$. 

\par
Throughout this section, let $G$ be a Cameron--Walker graph 
whose labeling of vertices is as in Figure \ref{fig:CameronWalkerGraph}. 
By Theorem \ref{CW}, the equality $(*)$ holds if and only if 
$\displaystyle \depth\left(S/I(G)\right) = \reg\left(S/I(G)\right)$. 
Both of these invariants have combinatorial explanations. 
The regularity is equal to the induced matching number 
(or the matching number) of $G$: 
$\reg\left(S/I(G)\right) = \sum_{j = 1}^{n}t_{j} + m$. 
In order to state about the depth, we need some definitions. 

\par
For a subset $A \subset V(G)$, 
we set $N_{G}(A) = \bigcup_{v \in A} N_{G}(v) \setminus A$. 
A subset $A \subset V(G)$ is said to be {\em independent} if 
$\{ x_i, x_j \} \notin E(G)$ for any $x_i, x_j \in A$. 
We denote by $i(G)$, the minimum cardinality of independent sets $A$ 
with $A \cup N_{G}(A) = V(G)$. 
Then $\displaystyle \depth\left(S/I(G)\right) = i(G)$; 
see \cite[Corollary 3.7]{HHKO}. 

\par
We have the following estimation for $i(G)$. 
\begin{Lemma}\label{i(G)}
  Let $G$ be a Cameron--Walker graph 
  whose labeling of vertices is as in Figure \ref{fig:CameronWalkerGraph}. 
  Then
\[
m + |\{j : t_{j} > 0\}| \leq i(G) \leq \min\left\{\sum_{i = 1}^{m}s_{i} + n, \sum_{j = 1}^{n}t_{j} + m\right\}. 
\]

Moreover if the bipartite part of $G$ is the complete bipartite graph, 
then 
\[
  i(G) = \min\left\{\sum_{i = 1}^{m}s_{i} + n, \sum_{j = 1}^{n}t_{j} + m\right\}. 
\]
\end{Lemma}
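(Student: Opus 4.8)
The plan is to read $i(G)$ as the minimum cardinality of an independent dominating set (an independent $A$ with $A\cup N_G(A)=V(G)$) and to treat the lower bound, the upper bound, and the final equality separately. Everything will rest on the decomposition of $V(G)$ from Figure \ref{fig:CameronWalkerGraph} into the stars $\mathrm{St}_i=\{v_i,x_1^{(i)},\dots,x_{s_i}^{(i)}\}$ and the triangle-clusters $\mathrm{Tr}_j=\{w_j\}\cup\{y_{\ell,1}^{(j)},y_{\ell,2}^{(j)}:1\le\ell\le t_j\}$, which are pairwise disjoint and together cover $V(G)$.

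For the lower bound I would take an arbitrary independent dominating set $A$. Since each leaf $x_k^{(i)}$ has $v_i$ as its unique neighbor, dominating it forces $x_k^{(i)}\in A$ or $v_i\in A$, so $A\cap\mathrm{St}_i\neq\emptyset$ for every $i$. Likewise, for each $j$ with $t_j>0$, the vertex $y_{1,1}^{(j)}$ has only $y_{1,2}^{(j)}$ and $w_j$ as neighbors, so dominating it forces $A\cap\mathrm{Tr}_j\neq\emptyset$. As the $\mathrm{St}_i$ and the $\mathrm{Tr}_j$ are pairwise disjoint, summing these contributions gives $|A|\ge m+|\{j:t_j>0\}|$, hence $i(G)\ge m+|\{j:t_j>0\}|$.

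For the upper bound I would display two explicit independent dominating sets. First, $A_1=\bigcup_{i}\{x_1^{(i)},\dots,x_{s_i}^{(i)}\}\cup\{w_1,\dots,w_n\}$ is independent (leaves meet only the $v_i$, the $w_j$ are mutually nonadjacent in the bipartite part, and no leaf meets any $w_j$) and dominating (each $v_i$ via a leaf since $s_i\ge 1$, and each $y_{\ell,\cdot}^{(j)}$ via $w_j$), so $i(G)\le\sum_i s_i+n$. Second, $A_2=\{v_1,\dots,v_m\}\cup\{y_{\ell,1}^{(j)}:1\le j\le n,\ 1\le\ell\le t_j\}$ is independent (the $v_i$ are mutually nonadjacent, and the chosen triangle vertices lie in distinct triangles, meeting only $w_j$ and $y_{\ell,2}^{(j)}$) and dominating (all leaves and, using connectivity of the bipartite part, all $w_j$ via the $v_i$; each $y_{\ell,2}^{(j)}$ via $y_{\ell,1}^{(j)}$), so $i(G)\le m+\sum_j t_j$. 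Together these give the claimed upper bound $i(G)\le\min\{\sum_i s_i+n,\ \sum_j t_j+m\}$, and none of this uses completeness.

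For the equality when the bipartite part is complete, the device I would use is that no independent set can contain both some $v_i$ and some $w_j$ (they are adjacent), so any independent dominating set $A$ has $A\cap\{v_1,\dots,v_m\}=\emptyset$ or $A\cap\{w_1,\dots,w_n\}=\emptyset$. If $A$ contains no $w_j$, then for each $j$ with $t_j>0$ each triangle forces $|A\cap\{y_{\ell,1}^{(j)},y_{\ell,2}^{(j)}\}|\ge 1$, contributing $\sum_j t_j$, while the $\mathrm{St}_i$ still contribute $m$, giving $|A|\ge m+\sum_j t_j$. If $A$ contains no $v_i$, then every leaf must lie in $A$ (contributing $\sum_i s_i$), and for each $j$ the cluster $\mathrm{Tr}_j$ meets $A$: if $t_j>0$ by triangle domination, and if $t_j=0$ because $w_j$'s only neighbors are the $v_i\notin A$, forcing $w_j\in A$; this contributes $n$, giving $|A|\ge\sum_i s_i+n$. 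In either case $|A|\ge\min\{\sum_i s_i+n,\ \sum_j t_j+m\}$, which with the upper bound yields equality. The hard part will be this last step: one must isolate the dichotomy coming from completeness and then run the two cases uniformly, the only delicate point being the vertices $w_j$ with $t_j=0$, which enter the count precisely because, once no $v_i$ lies in $A$, nothing else can dominate them.
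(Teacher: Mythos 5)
Your proof is correct and follows essentially the same route as the paper: the lower bound by counting forced intersections of $A$ with the pairwise disjoint star and triangle clusters, and the equality via the dichotomy that completeness forces $A$ to avoid all $v_i$ or all $w_j$, with the $t_j=0$ vertices $w_j$ handled exactly as in the paper. The only difference is cosmetic: you exhibit the two independent dominating sets for the upper bound explicitly, where the paper simply calls that bound clear.
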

\begin{proof}
The upper bound is clear. We prove the lower bound. 

\par 
Let $A \subset V(G)$ be an independent set with $A \cup N_{G}(A) = V(G)$. 
Then we put 
$A_{\rm bip} = A \cap \{v_{1}, \ldots, v_{m}, w_{1}, \ldots, w_{n}\}$ and 
$A' = A \setminus A_{\rm bip}$. 
We note that 
$A = A_{\rm bip} \sqcup A'$, and 
\begin{itemize}
\item If $v_{i} \not\in A_{\rm bip}$, then $x^{(i)}_{1}, \ldots, x^{(i)}_{s_{i}} \in A'$;
\item If $w_{j} \not\in A_{\rm bip}$, then $y^{(j)}_{\ell, 1} \in A'$ 
  or $y^{(j)}_{\ell, 2} \in A'$ for all $1 \leq \ell \leq t_{j}$. 
\end{itemize}
Hence one has 
\begin{eqnarray*}
|A| = |A_{\rm bip}| + |A'| &\geq& |A_{\rm bip}| + \sum_{1 \leq i \leq m \atop v_{i} \not\in A_{\rm bip}} s_{i} 
+ \sum_{1 \leq j \leq n \atop w_{j} \not\in A_{\rm bip}} t_{j} \\
&\geq& m + |\{j : t_{j} > 0\}|. 
\end{eqnarray*}
Thus $i(G) \geq m + |\{j : t_{j} > 0\}|$. 

\par
When the bipartite part of $G$ is the complete bipartite graph, 
one has either $A_{\rm bip} \subset \{ v_1, \ldots, v_m \}$ or 
$A_{\rm bip} \subset \{ w_1, \ldots, w_n \}$. 
For the former case, since $s_i \geq 1$ for all $i$, it follows that 
$|A| \geq \sum_{j=1}^n t_j + m$. 
For the latter case, one has 
$|A| \geq \sum_{i=1}^m s_i + n$ 
because $w_j \in A_{\rm bip}$ if $t_j = 0$. 
It then follows that 
\[
  i(G) \geq \min\left\{\sum_{i = 1}^{m}s_{i} + n, \sum_{j = 1}^{n}t_{j} + m\right\}. 
\]
Combining this with the upper bound, one has the equality. 
\end{proof}

%

By virtue of this lemma, we can give a classification of Cameron--Walker graphs $G$ 
satisfying the equality $(*)$. 

\begin{Theorem}\label{Main}
  Let $G$ be a Cameron--Walker graph 
  whose labeling of vertices is as in Figure \ref{fig:CameronWalkerGraph} and 
  $G_{\rm bip}$ the bipartite part of $G$. 
  Then $S/I(G)$ satisfies the equality $(*)$ if and only if 
  \begin{eqnarray}\label{ineqThm}
  \sum_{1 \leq i \leq m \atop v_{i} \in V} s_{i} \ + \ \left| \left\{ j : N_{G_{\rm bip}} (w_{j}) \subset V  \right\} \right| \geq \sum_{1 \leq j \leq n \atop N_{G_{\rm bip}} (w_{j}) \subset V} t_{j} \ + \ |V|
  \end{eqnarray}
  holds for all $V \subset \{ v_{1}, \ldots, v_{m} \}$. 
\end{Theorem}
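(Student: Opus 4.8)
The plan is to reduce the equality $(*)$ to a single combinatorial inequality for the invariant $i(G)$ and then to recognize that inequality as (\ref{ineqThm}). First I would invoke Theorem \ref{CW}: since $a(S/I(G))=0$ we have $\deg h(S/I(G),\lambda)=\dim S/I(G)$, so $(*)$ collapses to $\reg(S/I(G))=\depth(S/I(G))$. Substituting $\reg(S/I(G))=\sum_{j=1}^{n}t_{j}+m$ and $\depth(S/I(G))=i(G)$ (by \cite[Corollary 3.7]{HHKO}), and recalling that Lemma \ref{i(G)} already gives $i(G)\leq\sum_{j=1}^{n}t_{j}+m$, the equality $(*)$ becomes equivalent to the reverse inequality $i(G)\geq\sum_{j=1}^{n}t_{j}+m$. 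Thus it suffices to show that every independent set $A$ with $A\cup N_{G}(A)=V(G)$ has $|A|\geq\sum_{j=1}^{n}t_{j}+m$ precisely when (\ref{ineqThm}) holds for all $V\subseteq\{v_{1},\ldots,v_{m}\}$.

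The core of the argument is an exact count of the minimal size of such an $A$ in terms of its trace on the $v$-vertices. Given an independent dominating set $A$, I would set $V=\{v_{i}:v_{i}\notin A\}$ and record the forced part of $A$, as in the proof of Lemma \ref{i(G)}. For each $v_{i}\in V$ all $s_{i}$ leaves $x_{k}^{(i)}$ must lie in $A$ (their only neighbour $v_{i}$ is excluded), while for $v_{i}\notin V$ no leaf can lie in $A$; this contributes exactly $\sum_{v_{i}\in V}s_{i}$ from the leaves and $m-|V|$ from the $v$-vertices themselves. For each $w_{j}$ I would split into two cases. If $N_{G_{\rm bip}}(w_{j})\not\subseteq V$, so some neighbour $v_{i}\in A$, then $w_{j}\notin A$ by independence and each of its $t_{j}$ pendant triangles forces exactly one $y$-vertex into $A$, contributing $t_{j}$. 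If $N_{G_{\rm bip}}(w_{j})\subseteq V$, then the cheapest legal choice is $w_{j}\in A$, which dominates all its triangles at cost $1$ (dominating through triangle vertices instead would cost $t_{j}\geq 1$), contributing $1$. This shows the minimal $|A|$ with prescribed $V$ equals
\[
f(V)=\sum_{v_{i}\in V}s_{i}+(m-|V|)+\sum_{j:\,N_{G_{\rm bip}}(w_{j})\not\subseteq V} t_{j}+\bigl|\{j:N_{G_{\rm bip}}(w_{j})\subseteq V\}\bigr|,
\]
and hence $i(G)=\min_{V}f(V)$.

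Finally I would compare $f(V)$ with $\sum_{j=1}^{n}t_{j}+m$. Writing $\sum_{j:\,N_{G_{\rm bip}}(w_{j})\not\subseteq V}t_{j}=\sum_{j=1}^{n}t_{j}-\sum_{j:\,N_{G_{\rm bip}}(w_{j})\subseteq V}t_{j}$ and cancelling the common terms $\sum_{j=1}^{n}t_{j}+m$, the inequality $f(V)\geq\sum_{j=1}^{n}t_{j}+m$ rearranges to
\[
\sum_{v_{i}\in V}s_{i}+\bigl|\{j:N_{G_{\rm bip}}(w_{j})\subseteq V\}\bigr|\geq\sum_{j:\,N_{G_{\rm bip}}(w_{j})\subseteq V}t_{j}+|V|,
\]
which is exactly (\ref{ineqThm}). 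Since $V=\emptyset$ gives $f(\emptyset)=\sum_{j=1}^{n}t_{j}+m$ (every $w_{j}$ falls in the first case, as $G_{\rm bip}$ is connected and so $N_{G_{\rm bip}}(w_{j})\neq\emptyset$), we recover $i(G)\leq\sum_{j=1}^{n}t_{j}+m$, and $i(G)=\sum_{j=1}^{n}t_{j}+m$ holds iff $f(V)\geq\sum_{j=1}^{n}t_{j}+m$ for every $V$, i.e. iff (\ref{ineqThm}) holds for all $V$. I expect the main obstacle to be the second paragraph: justifying that $f(V)$ is simultaneously a valid lower bound for every independent dominating set with trace $V$ and is actually attained. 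This requires checking the domination and independence constraints block by block for the pendant triangles, and in particular verifying that in the case $N_{G_{\rm bip}}(w_{j})\subseteq V$ it is genuinely cheaper to dominate through $w_{j}$ itself than through its $t_{j}$ triangles, which is precisely what produces the counting term $\bigl|\{j:N_{G_{\rm bip}}(w_{j})\subseteq V\}\bigr|$ rather than a sum of the $t_{j}$.
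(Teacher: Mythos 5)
Your proposal is correct and follows essentially the same route as the paper: both reduce $(*)$ to the single equality $i(G) = \sum_{j=1}^{n} t_{j} + m$ (via Theorem \ref{CW}, $\depth(S/I(G)) = i(G)$, $\reg(S/I(G)) = \sum_{j=1}^{n} t_{j} + m$, and the upper bound in Lemma \ref{i(G)}), and then carry out the same block-by-block count over leaves, $v$-vertices, $w$-vertices and pendant triangles. Your exact formula $i(G) = \min_{V} f(V)$ simply packages in one statement what the paper does in two separate directions: the set attaining $f(V)$ is precisely the paper's explicitly constructed $A$ in the ``only if'' part, and the bound $|A| \geq f(V)$ is precisely the paper's counting argument in the ``if'' part.
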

\begin{proof}
Assume that there exists a subset $V \subset \{ v_{1}, \ldots, v_{m} \}$ satisfying
  \[
  \sum_{1 \leq i \leq m \atop v_{i} \in V} s_{i} \ + \ \left| \left\{ j : N_{G_{\rm bip}} (w_{j}) \subset V  \right\} \right| < \sum_{1 \leq j \leq n \atop N_{G_{\rm bip}} (w_{j}) \subset V} t_{j} \ + \ |V|. 
  \]
Let
\[
A = \left( \{ v_{1}, \ldots, v_{m} \} \setminus V \right) \ \cup \ \left\{ w_{j} : N_{G_{\rm bip}} (w_{j}) \subset V \right\} \ \ \ \ \ \ \ \ \ \ \ \ \ \ \ \ 
\]
\[ 
\cup \bigcup_{1 \leq i \leq m \atop v_{i} \in V} \left\{ x_{1}^{(i)}, \ldots, x_{s_{i}}^{(i)} \right\} \ \cup \ \bigcup_{1 \leq j \leq n \atop N_{G_{\rm bip}} (w_{j}) \not\subset V, \ t_{j} > 0} \left\{ y_{1, 1}^{(j)}, \ldots, y_{t_{j}, 1}^{(j)} \right\}. 
\] 
Then $A$ is an independent set with $A \cup N_{G} (A) = V(G)$ and 
\begin{eqnarray*}
|A| &=& m - |V| + \left| \left\{ j : N_{G_{\rm bip}} (w_{j}) \subset V \right\} \right| + \sum_{1 \leq i \leq m \atop v_{i} \in V} s_{i} + \sum_{1 \leq j \leq n \atop N_{G_{\rm bip}} (w_{j}) \not\subset V, \ t_{j} > 0} t_{j} \\
&<& m - |V| + \sum_{1 \leq j \leq n \atop N_{G_{\rm bip}} (w_{j}) \subset V} t_{j} \ + \ |V| + \sum_{1 \leq j \leq n \atop N_{G_{\rm bip}} (w_{j}) \not\subset V, \ t_{j} > 0} t_{j} \\ 
&=& \sum_{j = 1}^{n} t_{j} + m. 
\end{eqnarray*}
Hence we have 
\[
\depth(S/I(G)) = i(G) < \sum_{j = 1}^{n} t_{j} + m = \reg(S/I(G)). 
\]
Thus $S/I(G)$ does not satisfy the equality $(*)$. 

Next, we assume that 
  \[
  \sum_{1 \leq i \leq m \atop v_{i} \in V} s_{i} \ + \ \left| \left\{ j : N_{G_{\rm bip}} (w_{j}) \subset V  \right\} \right| \geq \sum_{1 \leq j \leq n \atop N_{G_{\rm bip}} (w_{j}) \subset V} t_{j} \ + \ |V|
  \]
  holds for all $V \subset \{ v_{1}, \ldots, v_{m} \}$. 

Let $A$ be an independent set of $V(G)$ with $A \cup N_{G} (A) = V(G)$.  
Let $A_{v} = A \cap \{ v_{1}, \ldots, v_{m} \}$ and $A_{w} = A \cap \{ w_{1}, \ldots, w_{n} \}$. 
Then, 
\[
|A| = |A_{v}| + |A_{w}| + \sum_{1 \leq i \leq m \atop v_{i} \in \{ v_{1}, \ldots, v_{m} \} \setminus A_{v}} s_{i} + \sum_{1 \leq j \leq n \atop N_{G_{\rm bip}} (w_{j}) \not\subset \{ v_{1}, \ldots, v_{m} \} \setminus A_{v}} t_{j} + \sum_{1 \leq j \leq n \atop N_{G_{\rm bip}} (w_{j}) \subset \{ v_{1}, \ldots, v_{m} \} \setminus A_{v}, w_{j} \not\in A_{w}} t_{j}. 
\]
For $j$ satisfying $N_{G_{\rm bip}} (w_j) \subset \{ v_1, \ldots, v_m \} \setminus A_{v}$ and $w_j \not\in A_w$, one has $t_{j} \geq 1$. 
Hence
\begin{eqnarray*}
|A| &\geq& |A_{v}| + |A_{w}| + \sum_{1 \leq i \leq m \atop v_{i} \in \{ v_{1}, \ldots, v_{m} \} \setminus A_{v}} s_{i} + \sum_{1 \leq j \leq n \atop N_{G_{\rm bip}} (w_{j}) \not\subset \{ v_{1}, \ldots, v_{m} \} \setminus A_{v}} t_{j} \\ 
& & + \left| \left\{ j : N_{G_{\rm bip}} (w_{j}) \subset \{ v_{1}, \ldots, v_{m} \} \setminus A_{v} , w_{j} \not\in A_{w} \right\} \right|. 
\end{eqnarray*}
Since $N_{G_{\rm bip}} (w_{j}) \subset \{ v_{1}, \ldots, v_{m} \} \setminus A_{v}$ 
if $w_{j} \in A_{w}$, one has
\[
|A| \geq |A_{v}| +  \sum_{1 \leq i \leq m \atop v_{i} \in \{ v_{1}, \ldots, v_{m} \} \setminus A_{v}} s_{i} + \left| \left\{ j : N_{G_{\rm bip}} (w_{j}) \subset \{ v_{1}, \ldots, v_{m} \} \setminus A_{v} \right\} \right| + \sum_{1 \leq j \leq n \atop N_{G_{\rm bip}} (w_{j}) \not\subset \{ v_{1}, \ldots, v_{m} \} \setminus A_{v}} t_{j}. 
\]
Considering the inequality (\ref{ineqThm}) for $V = \{ v_{1}, \ldots, v_{m} \} \setminus A_{v}$, 
it follows that 
\begin{eqnarray*}
& & \sum_{1 \leq i \leq m \atop v_{i} \in \{v_{1}, \ldots, v_{m}\} \setminus A_{v}} s_{i} + \left| \left\{ j : N_{G_{\rm bip}} (w_{j}) \subset \{ v_{1}, \ldots, v_{m} \} \setminus A_{v} \right\} \right| \\
&\geq& \sum_{1 \leq j \leq n \atop N_{G_{\rm bip}} (w_{j}) \subset \{ v_{1}, \ldots, v_{m} \} \setminus A_{v}}t_{j} \ + \  \left| \{ v_{1}, \ldots, v_{m} \} \setminus A_{v} \right|. 
\end{eqnarray*}
Hence we have 
\begin{eqnarray*}
|A| &\geq& |A_{v}| + \sum_{1 \leq j \leq n \atop N_{G_{\rm bip}} (w_{j}) \subset \{ v_{1}, \ldots, v_{m} \} \setminus A_{v}}t_{j} \ + \  \left| \{ v_{1}, \ldots, v_{m} \} \setminus A_{v} \right| + \sum_{1 \leq j \leq n \atop N_{G_{\rm bip}} (w_{j}) \not\subset \{ v_{1}, \ldots, v_{m} \} \setminus A_{v}} t_{j} \\
&=& \sum_{j = 1}^{n} t_{j} + m. 
\end{eqnarray*}
Thus one has 
\[
i(G) \geq \sum_{j = 1}^{n} t_{j} + m.
\] 
This inequality together with Lemma \ref{i(G)} says that  
\[
\depth(S/I(G)) = i(G) = \sum_{j = 1}^{n} t_{j} + m = \reg(S/I(G)).
\] 
Therefore $S/I(G)$ satisfies the equality $(*)$. 
\end{proof}

\begin{Remark}\normalfont
\label{remark}
\begin{enumerate}
	\item When we use Theorem \ref{Main}, we only need to check the inequality (\ref{ineqThm}) for $V \subset \{ v_{1}, \ldots, v_{m} \}$ with $N_{G_{\rm bip}} (w_{j}) \subset V$ for some $1 \leq j \leq n$.
	Indeed, let $V$ be a subset of $\{v_{1}, \ldots, v_{m}\}$ such that $N_{G_{\rm bip}} (w_{j}) \not\subset V$ for all $1 \leq j \leq n$. 
	Then the inequality (\ref{ineqThm}) for $V$ is 
	$\sum_{1 \leq i \leq m , v_{i} \in V} s_{i} \geq |V|$, 
	which always holds since $s_{i} \geq 1$ for all $1 \leq i \leq m$. 
	\item Considering the inequality (\ref{ineqThm}) for $V = \{ v_{1}, \ldots, v_{m} \}$, 
	it follows that $\sum_{i = 1}^{m}s_{i} + n \geq \sum_{j = 1}^{n}t_{j} + m$  
	holds if $S/I(G)$ satisfies the equality $(*)$. 
\end{enumerate}
\end{Remark}


As a corollary of Theorem \ref{Main}, one has

\begin{Corollary}\label{main}
  Let $G$ be a Cameron--Walker graph 
  whose labeling of vertices is as in Figure \ref{fig:CameronWalkerGraph}. 
  Suppose that $t_{j} \leq 1$ for all $1 \leq j \leq n$. 
  Then $S/I(G)$ satisfies the equality $(*)$. 
\end{Corollary}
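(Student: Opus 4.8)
The plan is to reduce the statement to the combinatorial criterion already established in Theorem \ref{Main}: since $(*)$ holds if and only if the inequality (\ref{ineqThm}) is satisfied for every $V \subset \{v_1, \ldots, v_m\}$, it suffices to verify (\ref{ineqThm}) under the hypothesis $t_j \leq 1$ for all $j$. I would fix an arbitrary $V \subset \{v_1, \ldots, v_m\}$ and compare the two sides term by term, exploiting the two structural bounds available here, namely $s_i \geq 1$ for all $i$ (valid for any Cameron--Walker graph) together with the standing hypothesis $t_j \leq 1$ for all $j$.

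The key estimate comes in two pieces that point in the same favourable direction. First, because each leaf count satisfies $s_i \geq 1$, the contribution of the vertices $v_i \in V$ to the left-hand side dominates $|V|$, that is $\sum_{1 \leq i \leq m,\ v_i \in V} s_i \geq |V|$. Second, because $t_j \leq 1$, the pendant-triangle contribution on the right-hand side is controlled by a cardinality over the same index set, $\sum_{1 \leq j \leq n,\ N_{G_{\rm bip}}(w_j) \subset V} t_j \leq |\{ j : N_{G_{\rm bip}}(w_j) \subset V \}|$. Chaining these together would give
\[
\sum_{1 \leq i \leq m,\ v_i \in V} s_i + \left|\{ j : N_{G_{\rm bip}}(w_j) \subset V \}\right| \geq |V| + \left|\{ j : N_{G_{\rm bip}}(w_j) \subset V \}\right| \geq \sum_{1 \leq j \leq n,\ N_{G_{\rm bip}}(w_j) \subset V} t_j + |V|,
\]
which is exactly (\ref{ineqThm}). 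As $V$ was arbitrary, Theorem \ref{Main} then yields the equality $(*)$.

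I expect essentially no genuine obstacle: the result is a clean special case in which both families of bounds force the inequality. The only point requiring a moment's care is to confirm that the index set over which the $t_j$ are summed on the right agrees with the cardinality used to bound it — both range over $\{ j : N_{G_{\rm bip}}(w_j) \subset V \}$ — so that the middle cardinality term cancels cleanly and the hypothesis $t_j \leq 1$ is applied only on that set. One could shorten the write-up by invoking Remark \ref{remark}(1) to restrict attention to those $V$ containing $N_{G_{\rm bip}}(w_j)$ for some $j$, but this is unnecessary since the displayed chain holds verbatim for every $V$.
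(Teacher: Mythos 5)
Your proof is correct and is essentially identical to the paper's own: both verify the criterion of Theorem \ref{Main} by combining the bound $\sum_{v_i \in V} s_i \geq |V|$ (from $s_i \geq 1$) with $\sum_{N_{G_{\rm bip}}(w_j) \subset V} t_j \leq |\{ j : N_{G_{\rm bip}}(w_j) \subset V \}|$ (from $t_j \leq 1$) for every $V \subset \{v_1, \ldots, v_m\}$. No difference in substance, only in the amount of written detail.
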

\begin{Remark}\normalfont
  Let $G$ be a Cameron--Walker graph whose labeling of vertices is 
  as in Figure \ref{fig:CameronWalkerGraph}. 
  Then $S/I(G)$ is Cohen-Macaulay if and only if 
  $s_{i} = 1$ for all $1 \leq i \leq m$ 
  and $t_j = 1$ for all $1 \leq j \leq n$ (\cite[Theorem 1.3]{HHKO}). 
  Hence the class of graphs in Corollary \ref{main} contains all Cohen--Macaulay 
  Cameron--Walker graphs. 
\end{Remark}
\begin{proof}[Proof of Corollary \ref{main}]
%
  Since $s_{i} \geq 1$ for all $1 \leq i \leq m$ and $t_j \leq 1$ for all $1 \leq j \leq n$, one has 
  \[
  \sum_{1 \leq i \leq m \atop v_{i} \in V} s_{i} \geq |V| \ \ \ {\rm and} \ \ \  \left| \left\{ j : N_{G_{\rm bip}} (w_{j}) \subset V  \right\} \right| \geq \sum_{1 \leq j \leq n \atop N_{G_{\rm bip}} (w_{j}) \subset V} t_{j}
  \]
   for all $V \subset \{v_{1}, \ldots, v_{m}\}$. 
   Hence $S/I(G)$ satisfies the equality $(*)$ by Theorem \ref{Main}. 
\end{proof}

From Theorem \ref{Main}, we also have 

\begin{Corollary}
  \label{Complete}
  Let $G$ be a Cameron--Walker graph 
  whose bipartite part is the complete bipartite graph. 
  We label the vertices of $G$ as in Figure \ref{fig:CameronWalkerGraph}. 
  Then $S/I(G)$ satisfies the equality $(*)$ if and only if 
  $\sum_{i = 1}^{m}s_{i} + n \geq \sum_{j = 1}^{n}t_{j} + m$.  
\end{Corollary}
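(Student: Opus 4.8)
The plan is to derive this directly from Theorem \ref{Main} by exploiting the special structure of the bipartite part. The crucial observation is that when $G_{\rm bip}$ is the complete bipartite graph on $\{v_{1}, \ldots, v_{m}\} \cup \{w_{1}, \ldots, w_{n}\}$, every $w_{j}$ is adjacent to all of $v_{1}, \ldots, v_{m}$, so $N_{G_{\rm bip}}(w_{j}) = \{v_{1}, \ldots, v_{m}\}$ for each $1 \leq j \leq n$. Consequently, for a subset $V \subset \{v_{1}, \ldots, v_{m}\}$ the containment $N_{G_{\rm bip}}(w_{j}) \subset V$ holds for some (equivalently, every) $j$ if and only if $V = \{v_{1}, \ldots, v_{m}\}$.

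First I would invoke Theorem \ref{Main}, which asserts that $S/I(G)$ satisfies $(*)$ if and only if the inequality (\ref{ineqThm}) holds for every $V \subset \{v_{1}, \ldots, v_{m}\}$. By Remark \ref{remark}(1) it suffices to verify (\ref{ineqThm}) only for those $V$ with $N_{G_{\rm bip}}(w_{j}) \subset V$ for some $j$, since for all other $V$ the inequality reduces to $\sum_{v_{i} \in V} s_{i} \geq |V|$, which holds automatically because each $s_{i} \geq 1$. By the observation above, in the complete bipartite case the unique such $V$ is $V = \{v_{1}, \ldots, v_{m}\}$.

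It then remains to read off (\ref{ineqThm}) for $V = \{v_{1}, \ldots, v_{m}\}$. For this choice every $w_{j}$ satisfies $N_{G_{\rm bip}}(w_{j}) \subset V$, so the cardinality $|\{j : N_{G_{\rm bip}}(w_{j}) \subset V\}|$ equals $n$, the sum over $\{j : N_{G_{\rm bip}}(w_{j}) \subset V\}$ of $t_{j}$ equals $\sum_{j=1}^{n} t_{j}$, and $|V| = m$; hence (\ref{ineqThm}) becomes precisely $\sum_{i=1}^{m} s_{i} + n \geq \sum_{j=1}^{n} t_{j} + m$, which is the stated condition. Combining the three steps gives the equivalence. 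There is essentially no analytic obstacle; the only point needing care is the bookkeeping that singles out $V = \{v_{1}, \ldots, v_{m}\}$ as the one nontrivial instance of (\ref{ineqThm}), and this is immediate once the neighborhood structure of the complete bipartite part is spelled out. As an alternative, the same conclusion follows even more quickly by combining the ``Moreover'' part of Lemma \ref{i(G)}, namely $i(G) = \min\{\sum_{i=1}^{m} s_{i} + n, \sum_{j=1}^{n} t_{j} + m\}$ in the complete bipartite case, with the identities $\depth(S/I(G)) = i(G)$ and $\reg(S/I(G)) = \sum_{j=1}^{n} t_{j} + m$, since $(*)$ holds if and only if these two invariants coincide.
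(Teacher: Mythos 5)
Your proof is correct and takes essentially the same approach as the paper's, which likewise observes that $N_{G_{\rm bip}}(w_{j}) = \{v_{1}, \ldots, v_{m}\}$ for every $j$ and then concludes immediately from Theorem \ref{Main} together with Remark \ref{remark}. Your alternative closing argument via the ``Moreover'' part of Lemma \ref{i(G)} combined with $\depth(S/I(G)) = i(G)$, $\reg(S/I(G)) = \sum_{j=1}^{n} t_{j} + m$, and the fact that $(*)$ reduces to $\reg = \depth$ by Theorem \ref{CW} is also valid, and is arguably even more direct.
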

\begin{proof} 
Since $N_{G_{\rm bip}}(w_{j}) = \{v_{1}, \ldots, v_{m}\}$ for all $1 \leq j \leq n$, the claim follows from Theorem \ref{Main} and Remark \ref{remark}. 
\end{proof}

\par
\bigskip

\par
In general, one has $\dim S/I(G) \geq \depth (S/I(G))$. 
Then it is natural to ask the following 
\begin{Question}\normalfont
  \label{quest:de}
  Given arbitrary integers $d, e$ with $d \geq e \geq 1$, 
  are there a Cameron--Walker graph $G$ 
  satisfying $\dim S/I(G) = d$ and $\depth (S/I(G)) = e$? 
\end{Question}

As an application of Corollary \ref{main}, 
we give a complete answer for Question \ref{quest:de}. 

\par
We first note about the depth. 
\begin{Proposition}
  \label{depth2}
  Let $G$ be a Cameron--Walker graph. Then $\depth S/I(G) \geq 2$. 
  Moreover $\depth S/I(G) = 2$ if and only if $G$ can be considered as 
  one of the following Cameron--Walker graphs: 
  \begin{enumerate}
  \item[(e1)] $m = 2$ and $t_{j} = 0$ for all $1 \leq j \leq n ;$ 
  \item[(e2)] $m = n = 1$ and $t_{1} = 1;$
  \item[(e3)] $m = n = 1$, $t_{1} \geq 2$, and $s_{1} = 1.$ 
  \end{enumerate}
  Here, we use labeling of vertices of $G$ 
  as in Figure \ref{fig:CameronWalkerGraph}. 
\end{Proposition}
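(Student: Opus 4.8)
The plan is to reduce everything to the combinatorial quantity $i(G)$ via the identity $\depth S/I(G) = i(G)$ from \cite[Corollary 3.7]{HHKO}, where $i(G)$ is the minimum cardinality of an independent set $A$ with $A \cup N_G(A) = V(G)$. The lower bound $i(G) \geq m + |\{j : t_j > 0\}|$ recorded in Lemma \ref{i(G)} will drive both assertions, and the converse direction will be handled by exhibiting explicit small independent dominating sets.

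For the inequality $\depth S/I(G) \geq 2$: if $m \geq 2$ then $i(G) \geq m \geq 2$ at once. If $m = 1$, the bipartite part is a star centered at $v_1$; were all $t_j = 0$, every $w_j$ would be a leaf attached to $v_1$ and $G$ would be a star graph, contradicting that $G$ is Cameron--Walker. Hence some $t_j > 0$, so $i(G) \geq 1 + 1 = 2$.

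For the characterization, I would first note that $i(G) = 2$ forces $m + |\{j : t_j > 0\}| \leq 2$. As $m \geq 1$, this leaves two cases: (A) $m = 2$ with all $t_j = 0$, and (B) $m = 1$ with exactly one $t_j > 0$ (the value $0$ being impossible by the previous paragraph). Case (A) is exactly (e1), and conversely $A = \{v_1, v_2\}$ is an independent dominating set of size $2$, so $i(G) = 2$ there. In case (B), the vertices $w_j$ with $t_j = 0$ are leaves attached to $v_1$, so (as in the First Step of the proof of Proposition \ref{s=d}) I would absorb them into the leaves of $v_1$ and regard $G$ as a Cameron--Walker graph with $m = n = 1$, whose unique $w_1$ carries $t := t_{j_0} \geq 1$ pendant triangles and whose $v_1$ carries some number $s \geq 1$ of leaves (with $s = 1$ precisely when the original graph already had $n = 1$ and $s_1 = 1$). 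For such a graph I would compute $i(G)$ by splitting on whether $v_1 \in A$: if $v_1 \in A$, each triangle still needs one of its two degree-$2$ vertices in $A$, giving $|A| \geq 1 + t$; if $v_1 \notin A$, all $s$ leaves must lie in $A$ while $w_1 \in A$ then dominates the triangles, giving minimum $s + 1$. Hence $i(G) = 1 + \min\{s, t\}$, which equals $2$ exactly when $t = 1$ (giving (e2), for any number of leaves) or when $t \geq 2$ and $s = 1$ (giving (e3)). Reading these equalities backwards also verifies that (e2) and (e3) do yield $i(G) = 2$.

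The main obstacle is the direct determination $i(G) = 1 + \min\{s, t\}$ in the reduced $m = n = 1$ case: one must check that no independent dominating set can undercut both natural candidates, which rests on the observations that a leaf $x_k^{(1)}$ can be dominated only by itself or by $v_1$, and that each pendant triangle forces at least one of its vertices into $A$. Once this computation and the bookkeeping for absorbing the leaf-type $w_j$'s are in place, matching against the list (e1)--(e3) is routine.
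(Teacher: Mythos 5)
Your proposal is correct and follows essentially the same route as the paper: reduce to $i(G)$ via \cite[Corollary 3.7]{HHKO}, use the lower bound $i(G) \geq m + |\{j : t_j > 0\}|$ of Lemma \ref{i(G)} to narrow to the cases $m=2$ (all $t_j=0$) and $m=1$ (exactly one $t_j>0$), absorb the leaf-type $w_j$'s to reach $m=n=1$, and then determine $i(G)$ there. The only cosmetic difference is that where the paper invokes the ``moreover'' (complete bipartite) part of Lemma \ref{i(G)} to get $i(G)=\min\{s_1+1,\,t_1+1\}$ for $\mathcal{K}_{1,1}$, you re-derive this formula directly by splitting on whether $v_1 \in A$ — a correct, self-contained substitute for the same step.
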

\begin{proof}
  Assume that $G$ is a Cameron--Walker graph with $\depth(S/I(G)) = 1$. 
  By Lemma \ref{i(G)}, one has $m = 1$ and $t_{j} = 0$ for all 
  $1 \leq j \leq n$. 
  Then $G$ is a star graph but this is a contradiction 
  since star graphs are not Cameron--Walker by definition. 

  \par
  Next assume that $G$ is a Cameron--Walker graph with $\depth(S/I(G)) = 2$. 
  By Lemma \ref{i(G)}, one has 
  \begin{itemize}
  \item $m=2$ and $t_j =0$ for all $1 \leq j \leq n$, or
  \item $m=1$ and $t_j = 0$ except for one $j$. 
  \end{itemize}
  We consider the case $m=1$. Since $G$ is not a star graph, there exists 
  just one $j$ with $t_j \neq 0$, say $j=1$. 
  When $n \geq 2$, since $m=1$ and $t_j = 0$ for $2 \leq j \leq n$, 
  $G$ can be considered as a Cameron--Walker graph whose bipartite subgraph is 
  of type $(1,1)$ such that $v_1$ has $s_1 + (n-1)$ leaf edges and $w_1$ has 
  one pendant triangle. Thus we may assume $n=1$. 
  If $t_1 \geq 2$, then $i(G) = \depth S/I(G) =2$ implies that $s_1 = 1$. 
  Hence the assertion follows. 

  \par
  The converse is easy. 
\end{proof}

Since any Cameron--Walker graph $G$ satisfies $\depth S/I(G) \geq 2$, 
we only consider the case $e \geq 2$ in Question \ref{quest:de}. 
By virtue of Corollary \ref{main}, we can give a Cameron--Walker graph 
$G$ satisfying the properties in Question \ref{quest:de} with the equality 
$(\ast)$.  
\begin{Corollary}
  \label{de}
  Given arbitrary integers $d, e$ with $d \geq e \geq 2$, 
  there exists a Cameron--Walker graph $G$ with the equality $(\ast)$ 
  satisfying $\dim S/I(G) = d$ and $\depth(S/I(G)) = e$. 
\end{Corollary}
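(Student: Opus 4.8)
The plan is to write down one explicit family of Cameron--Walker graphs and to read off the two invariants from the formulas already established. Since $e \geq 2$ and $d \geq e$, I would set $m = 1$, take $n = e-1$ vertices $w_{1}, \dots, w_{e-1}$ on the other side of the bipartite part (so that the bipartite part is the complete bipartite graph $K_{1,e-1}$, which is connected), attach exactly one pendant triangle to each $w_{j}$ (that is, $t_{j} = 1$ for all $j$), and attach $s_{1} = d-e+1$ leaf edges to $v_{1}$. Because $d \geq e$ we have $s_{1} \geq 1$, and because $e \geq 2$ we have $n \geq 1$, so at least one pendant triangle is present; hence $G$ is neither a star graph nor a star triangle and is a genuine Cameron--Walker graph with the labeling of Figure \ref{fig:CameronWalkerGraph}.

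The first step is to record that $(\ast)$ holds for this $G$. Since every $t_{j}$ equals $1$, Corollary \ref{main} applies directly and yields the equality $(\ast)$; equivalently, by Theorem \ref{CW} one has $\deg h\left(S/I(G),\lambda\right) = \dim S/I(G)$, so $(\ast)$ reduces to $\reg\left(S/I(G)\right) = \depth\left(S/I(G)\right)$, which is confirmed by the computation below.

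Next I would compute the dimension from Proposition \ref{s=d}: since $\max\{t_{j},1\} = 1$ for each $j$,
\[
\dim S/I(G) = \sum_{i=1}^{m} s_{i} + \sum_{j=1}^{n} \max\{t_{j},1\} = s_{1} + n = (d-e+1) + (e-1) = d.
\]
For the depth, I would use the regularity formula $\reg\left(S/I(G)\right) = \sum_{j=1}^{n} t_{j} + m = (e-1) + 1 = e$ together with the equality $(\ast)$ established above, which forces $\depth\left(S/I(G)\right) = \reg\left(S/I(G)\right) = e$. Alternatively, one may read off $\depth\left(S/I(G)\right) = i(G) = e$ directly from Lemma \ref{i(G)}, whose lower and upper bounds both equal $m+n = e$ in the present situation because every $s_{i} \geq 1$. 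This produces a Cameron--Walker graph with the equality $(\ast)$, with $\dim S/I(G) = d$ and with $\depth\left(S/I(G)\right) = e$, as required.

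There is essentially no hard step in this argument: the construction is explicit and the invariants follow from the formulas of Section \ref{sec:a-invariant} and from Corollary \ref{main}. The only points demanding care are the boundary cases, namely $d = e$ (where $s_{1} = 1$ and $G$ is the Cohen--Macaulay Cameron--Walker graph built on $K_{1,e-1}$) and $e = 2$ (where $n = 1$), together with the verification that the graph produced genuinely satisfies the defining conditions of a Cameron--Walker graph rather than degenerating into a star graph or a star triangle.
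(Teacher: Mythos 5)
Your proposal is correct. It rests on the same pillars as the paper's own proof---Corollary \ref{main} for the equality $(\ast)$, Proposition \ref{s=d} (equivalently Theorem \ref{CW}) for the dimension, and Lemma \ref{i(G)} for the depth---but your construction is genuinely different: you use a single family with $m=1$, $n=e-1$, complete bipartite part $K_{1,e-1}$, all $t_j=1$ and $s_1=d-e+1$, whereas the paper splits into two cases with the parameters on the $v$-side and $n=1$ (namely $m=e$, $s_1=\cdots=s_{e-1}=1$, $s_e=d-e$, $t_1=0$ when $d>e$, and $m=d-1$, all $s_i=1$, $t_1=1$ when $d=e$), computing the depth in each case by exhibiting an explicit independent set $A$ with $A\cup N_G(A)=V(G)$ of the right cardinality. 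Your version buys uniformity (no case split), and because your bipartite part is complete, the depth comes for free from the equality case of Lemma \ref{i(G)}: $i(G)=\min\left\{\sum_{i}s_i+n,\ \sum_{j}t_j+m\right\}=\min\{d,e\}=e$; your alternative derivation of the depth from $(\ast)$ together with $\reg(S/I(G))=\sum_{j}t_j+m=e$ is equally valid. One small slip, harmless to the argument: the upper bound in Lemma \ref{i(G)} equals $e$ because $\sum_{j}t_j+m=e\leq d=\sum_{i}s_i+n$, i.e.\ because $d\geq e$, not \emph{because every $s_i\geq 1$} (that hypothesis is only needed for the graph to be well defined).
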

\begin{proof}
  We use the labeling of vertices of a Cameron--Walker graph 
  as in Figure \ref{fig:CameronWalkerGraph}. 
  \newline
  {\bf $\bullet$ The case $d>e$:} 
  Let $G$ be the Cameron--Walker graph with $m=e$, $n=1$, 
  $s_1 = \cdots = s_{e-1} = 1$, $s_{e} = d-e$, and $t_1 = 0$. 
  Then $\dim (S/I(G)) = \sum_{i=1}^e s_i + \max \{ t_1, 1 \} = d$. 
  Also, $A := \{ v_1, \ldots, v_e \}$ 
  is an independent set of $V(G)$ with $A \cup N_G (A) = V(G)$ 
  which gives $i(G)$. Thus one has $\depth S/I(G) = i(G) = |A| = e$. 
  \newline
  {\bf $\bullet$ The case $d=e$:} 
  Let $G$ be the Cameron--Walker graph with $m=d-1$, $n=1$, 
  $s_1 = \cdots = s_{d-1} = 1$, and $t_1 = 1$. 
  Then $\dim (S/I(G)) = \sum_{i=1}^{d-1} s_i + \max \{ t_1, 1 \} = d$. 
  Also, $A := \{ x_1^{(1)}, \ldots, x_{d-1}^{(1)} \} \cup \{ w_1 \}$ 
  is an independent set of $V(G)$ with $A \cup N_G (A) = V(G)$ 
  which gives $i(G)$. Thus one has $\depth S/I(G) = i(G) = |A| = e$. 
\end{proof}

\par
\bigskip

\par
Finally of the section, we provide some classes of graphs $G$ 
which satisfy the equality $(\ast)$ 
other than Cameron--Walker graphs. 


\begin{Proposition}
  \label{other-ast}
  Let $G$ be the one of the following graph. 
  Then the equality $(\ast)$ satisfies: 
  \begin{enumerate}
  \item The star graph $G^{{\rm star}(x_{v})}_{s}$ $(s \geq 1)$. 
  \item The path graph $P_n$ $(n \geq 2)$. 
  \item The $n$-cycle $C_n$ $(n \geq 3)$. 
  \item The graph $G_s$ on $\{ x_1, \ldots, x_{s+4} \}$ where $s \geq 1$ 
    which consists of the star graph $G^{{\rm star}(x_{s+3})}_{s}$ 
    on $\{ x_1, \ldots, x_s \} \cup \{ x_{s+3} \}$ 
    and $P_4$ on $\{ x_{s+1}, \ldots,b x_{s+4} \}$; see Figure \ref{fig:Gs}. 
    \begin{figure}[htbp]
      \centering

      \bigskip

      \begin{xy}
	\ar@{} (0,0);(30, 0) *{\text{$G_{s} =$}};
	\ar@{} (0,0);(50, 0) *++!U{x_{s + 1}} *\cir<4pt>{} = "A";
	\ar@{-} "A";(70, 0) *++!U{x_{s + 2}} *\cir<4pt>{} = "B";
	\ar@{-} "B";(90, 0) *++!U{x_{s + 3}} *\cir<4pt>{} = "C";
	\ar@{-} "C";(110, 0) *++!U{x_{s + 4}} *\cir<4pt>{} = "D";
	\ar@{-} "C";(80, 10) *++!D{x_{1}} *\cir<4pt>{};
	\ar@{-} "C";(100, 10) *++!D{x_{s}} *\cir<4pt>{};
	\ar@{} (0,0); (90, 14) *++!U{\cdots};
      \end{xy}

      \bigskip

      \caption{The graph $G_s$}
      \label{fig:Gs}
    \end{figure}
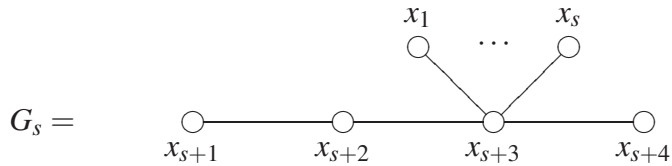
  \end{enumerate}
\end{Proposition}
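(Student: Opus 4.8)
The plan is to verify, for each of the four graphs, the reformulation of $(\ast)$ recorded in the Introduction: $(\ast)$ holds if and only if $\deg h(S/I(G),\lambda)-\reg(S/I(G)) = \dim S/I(G)-\depth(S/I(G))$, equivalently if and only if $S/I(G)$ has a unique extremal Betti number. I would treat the star graph $G^{{\rm star}(x_v)}_s$ and the graph $G_s$ together, since both have a linear resolution, and handle the path $P_n$ and the cycle $C_n$ by a direct computation of the $h$-polynomial.

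For $G^{{\rm star}(x_v)}_s$ and $G_s$ the point is that $\reg(S/I(G))=1$. Both graphs are trees, hence chordal, so $\reg(S/I(G))=im(G)$ by \cite[Theorem 6.7]{HaVanTuyl}, and it suffices to check $im(G)=1$. In the star every edge contains $x_v$, so no two edges are disjoint. In $G_s$ the only edge avoiding the central vertex $x_{s+3}$ is $\{x_{s+1},x_{s+2}\}$, and $\{x_{s+2},x_{s+3}\}$ meets both this edge and every edge through $x_{s+3}$, so no induced matching has size $2$; thus $im(G_s)=1$. Since $I(G)$ is generated in degree $2$ and $\reg(S/I(G))=1$, every nonzero higher Betti number of $S/I(G)$ lies in the row $j=1$, so the resolution of $I(G)$ is linear, hence pure, and $(\ast)$ follows from the pure-resolution criterion recalled in the Introduction.

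For $P_n$ and $C_n$ I would compute $\deg h$ through the Hilbert-series recursion of Lemma \ref{HSofEdgeIdeal}. Deleting an endpoint of $P_n$ gives
\[
H(S/I(P_n),\lambda)=H(S/I(P_{n-1}),\lambda)+\tfrac{\lambda}{1-\lambda}\,H(S/I(P_{n-2}),\lambda),
\]
and deleting a vertex of $C_n$ gives
\[
H(S/I(C_n),\lambda)=H(S/I(P_{n-1}),\lambda)+\tfrac{\lambda}{1-\lambda}\,H(S/I(P_{n-3}),\lambda).
\]
Propagating the numerator polynomials through these recursions, exactly as in the proof of Proposition \ref{s=d}, I would determine the $a$-invariant $a(S/I)=\deg h-\dim$ by induction on $n$: for the path one gets $a(S/I(P_n))=0$ except when $n\equiv 1\pmod 3$, where $a=-1$, and for the cycle one gets $a(S/I(C_n))=0$ for every $n$. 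Writing $(\ast)$ as the numerical identity $a(S/I(G))=\reg(S/I(G))-\depth(S/I(G))$ and inserting the known formulas $\dim S/I(P_n)=\lceil n/2\rceil$, $\reg(S/I(P_n))=\lceil (n-1)/3\rceil$ and $\depth(S/I(P_n))=\lceil n/3\rceil$ (together with the corresponding formulas for $C_n$, for which $\reg$ and $\depth$ coincide for all $n$), the equality $(\ast)$ reduces to an elementary check that can be organized by the residue of $n$ modulo $6$.

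The hard part will be the $h$-polynomial computation for $P_n$ and $C_n$: because the Krull dimension drops along the recursion, the induction has to keep simultaneous control of $\deg h$ and $\dim$, which is exactly the situation isolated in Lemma \ref{Lemma}. Once the $a$-invariant is pinned down the comparison with $\reg-\depth$ is routine; for the cycle the additional input $\reg(S/I(C_n))=\depth(S/I(C_n))$ collapses $(\ast)$ to the single statement $a(S/I(C_n))=0$.
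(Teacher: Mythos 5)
Your treatment of (1) and (4) is correct and is in fact a different, cleaner route than the paper's: you only need $\reg(S/I(G))=1$ (via $im(G)=1$ and the regularity-equals-induced-matching theorem for forests), after which generation of $I(G)$ in degree $2$ forces the resolution to be $2$-linear, hence pure, and the pure-resolution criterion recalled in the Introduction gives $(\ast)$. The paper instead invokes Kimura's non-vanishing theorem (Lemma \ref{nonvanishing}) to pin down $\projdim(S/I(G))=s$ (resp.\ $s+2$) and to exhibit the extremal Betti number $\beta_{p,p+1}\neq 0$; your argument avoids computing the projective dimension altogether.

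For (2) and (3), however, the proposal has a genuine gap at its technical core. You plan to compute $\deg h(S/I(P_n),\lambda)$ and $\deg h(S/I(C_n),\lambda)$ by pushing numerators through the recursions and controlling them with Lemma \ref{Lemma}; but Lemma \ref{Lemma} can only ever conclude $\deg h = \dim$, i.e.\ $a=0$, so it is structurally incapable of producing your own claim $a(S/I(P_n))=-1$ for $n\equiv 1 \pmod 3$, and its hypotheses fail at most steps of the path recursion anyway (e.g.\ condition (3) fails for every odd $n$, since then $\dim S/I(P_{n-1})=\dim S/I(P_{n-2})$). The real obstruction is cancellation of leading terms, which genuinely occurs: $h(S/I(P_3),\lambda)=1+\lambda-\lambda^2$, $h(S/I(P_2),\lambda)=1+\lambda$, and the recursion gives $h(S/I(P_4),\lambda)=(1+\lambda-\lambda^2)+\lambda(1+\lambda)=1+2\lambda$, the $\lambda^2$-terms cancelling exactly. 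The same degree coincidence occurs for $C_{3\ell}$: there the numerator is $h(S/I(P_{3\ell-1}),\lambda)+\lambda\, h(S/I(P_{3\ell-3}),\lambda)$ with both summands of the same degree $\lceil (3\ell-1)/2\rceil$, so concluding $a(S/I(C_{3\ell}))=0$ requires knowing the signs of the two leading coefficients. Your induction must therefore carry, at minimum, the leading coefficient (and, after a cancellation, the subleading coefficient) of each $h(S/I(P_k),\lambda)$; this is a missing idea, not routine bookkeeping, even though all the numerical values you assert ($\reg=\depth$ for every cycle, and the stated $a$-invariants) are in fact correct.

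It is worth seeing how the paper sidesteps exactly this difficulty. For $P_n$ (all residues of $n$) and for $C_{3\ell}$ it never computes $\deg h$ at all: it proves $\beta_{p,p+r}\neq 0$ directly by exhibiting disjoint star subgraphs whose chosen edges form an induced matching (Lemma \ref{nonvanishing}), using the known formulas for $\depth$ and $\reg$. The Hilbert-series recursion is used only for $C_{3\ell+1}$ and $C_{3\ell+2}$, precisely the cases where the two summands $h(S/I(P_{n-1}),\lambda)$ and $\lambda\, h(S/I(P_{n-3}),\lambda)$ have \emph{different} degrees, so no cancellation can occur; and even there $\deg h(S/I(P_k),\lambda)$ is imported from the already-established equality $(\ast)$ for paths rather than recomputed by recursion. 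To repair your argument, either replace the recursion for $P_n$ and $C_{3\ell}$ by this Betti-number argument, or strengthen your induction so that it tracks the leading coefficients of $h(S/I(P_n),\lambda)$ explicitly.
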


Before proving Proposition \ref{other-ast}, 
we recall some facts on invariants of an edge ideal. 
%
For a finite simple graph $G$, 
the dimension $\dim S/I(G)$ is equal to the maximum cardinality 
of independent sets of $G$. 
In particular, one has 
$\dim S/I(P_{n}) = \lceil n/2 \rceil$ and 
$\dim S/I(C_{n}) = \lceil (n-1)/2 \rceil$. 


\par
We also recall the non-vanishing theorem of Betti numbers of edge ideals. 
%
\begin{Lemma}[{\cite[Theorems 3.1 and 4.1]{Kimura12}}]
  \label{nonvanishing}
  Let $G$ be a finite simple graph. 
  Suppose that there exists a set of star subgraphs 
  $\{ B_{1}, \ldots, B_{\ell} \} \ (\ell \geq 1)$ of $G$ 
  satisfying the following conditions$:$ 
  \begin{enumerate}
  \item $V(B_{k}) \cap V(B_{{k}'}) = \emptyset$ 
  for all $1 \leq k < {k}' \leq \ell; $
  \item There exist edges $e_{1}, \ldots, e_{\ell}$ with $e_{k} \in E(B_{k})$, 
    $k = 1, \ldots, \ell$ such that $\{e_{1}, \ldots, e_{\ell}\}$ 
    forms an induced matching of $G$. 
  \end{enumerate}
  Set $B_k 
  = G^{{\rm star}(x_{\beta_k})}_{\alpha_k}$ $(1 \leq k \leq \ell)$ and 
  $i = \alpha_{1} + \cdots + \alpha_{\ell}$. Then one has 
  \[
    \beta_{i, i+\ell} (S/I(G)) \neq 0. 
  \] 

  \par
  Moreover, when $G$ has no cycle, $\beta_{i,i+\ell} (S/I(G)) \neq 0$ 
  if and only if there exists such a set of star subgraphs of $G$. 
\end{Lemma}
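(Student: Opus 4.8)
The plan is to pass from Betti numbers to simplicial homology via Hochster's formula and then exhibit an explicit nonzero class. Write $\mathrm{Ind}(G)$ for the independence complex of $G$, i.e. the Stanley--Reisner complex of $I(G)$, whose faces are the independent sets of $G$; for $W\subseteq V(G)$ its restriction to $W$ is exactly $\mathrm{Ind}(G_{W})$. Hochster's formula reads $\beta_{i,j}(S/I(G))=\sum_{|W|=j}\dim_{K}\tilde H_{j-i-1}(\mathrm{Ind}(G_{W});K)$, so taking $j=i+\ell$ it suffices, for the non-vanishing statement, to produce a single $W$ with $|W|=i+\ell$ and $\tilde H_{\ell-1}(\mathrm{Ind}(G_{W});K)\neq 0$.

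For the first assertion I would take $W=\bigcup_{k=1}^{\ell}V(B_{k})$; since $B_{k}$ has one centre $x_{\beta_{k}}$ and $\alpha_{k}$ leaves, $|W|=\sum_{k}(\alpha_{k}+1)=i+\ell$. All star edges lie in $G_{W}$, so $\mathrm{Ind}(G_{W})$ is a subcomplex of $\mathrm{Ind}(B_{1})*\cdots*\mathrm{Ind}(B_{\ell})=\mathrm{Ind}(B_{1}\sqcup\cdots\sqcup B_{\ell})$. Each $\mathrm{Ind}(B_{k})$ — the point $\{x_{\beta_{k}}\}$ together with the disjoint simplex on the $\alpha_{k}$ leaves — is homotopy equivalent to $S^{0}$, so the join is $S^{\ell-1}$ with $\tilde H_{\ell-1}=K$. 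Writing $d_{k}$ for the leaf with $e_{k}=\{x_{\beta_{k}},d_{k}\}$, the fundamental cycle of the join is $z=(\{x_{\beta_{1}}\}-\{d_{1}\})*\cdots*(\{x_{\beta_{\ell}}\}-\{d_{\ell}\})=\sum_{S\subseteq\{1,\dots,\ell\}}\pm\,\sigma_{S}$, where $\sigma_{S}=\{x_{\beta_{k}}:k\in S\}\cup\{d_{k}:k\notin S\}$. The key point is that $z$ already lives in $\mathrm{Ind}(G_{W})$: each $\sigma_{S}$ picks exactly one endpoint of each $e_{k}$, and the \emph{induced matching} hypothesis guarantees that the only edges of $G$ among $\{x_{\beta_{1}},d_{1},\dots,x_{\beta_{\ell}},d_{\ell}\}$ are the $e_{k}$ themselves, so $\sigma_{S}$ is independent and hence a face. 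Its boundary involves only subfaces, so $z$ is a genuine cycle of $\mathrm{Ind}(G_{W})$, and under the subcomplex inclusion $\iota\colon\mathrm{Ind}(G_{W})\hookrightarrow\mathrm{Ind}(B_{1})*\cdots*\mathrm{Ind}(B_{\ell})$ the chain $z$ is carried to the generator of $\tilde H_{\ell-1}(S^{\ell-1})=K$. Thus $\iota_{*}[z]\neq 0$, whence $[z]\neq 0$ and $\beta_{i,i+\ell}(S/I(G))\neq 0$.

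For the converse, assume $G$ is a forest with $\beta_{i,i+\ell}\neq 0$. Hochster's formula gives a $W$ with $|W|=i+\ell$ and $\tilde H_{\ell-1}(\mathrm{Ind}(G_{W}))\neq 0$, and $G_{W}$ is again a forest. The engine is the leaf-removal (suspension) lemma: if $v$ is a degree-one vertex of a graph $H$ with neighbour $u$, then $\mathrm{Ind}(H)\simeq\Sigma\,\mathrm{Ind}(H\setminus N_{H}[u])$, which one checks by a Mayer--Vietoris/cone decomposition around $v$. Since a forest with an edge always has a leaf, I would iterate: choose a leaf $v_{1}$ with neighbour $u_{1}$, let $B_{1}$ be the star centred at $u_{1}$ with leaf set $N_{G_{W}}(u_{1})$ and matched edge $e_{1}=\{u_{1},v_{1}\}$, delete $N_{G_{W}}[u_{1}]$, and repeat. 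Each step strips off one bouquet and one suspension. Using the known dichotomy that the independence complex of a forest is either contractible or homotopy equivalent to a single sphere, $\tilde H_{\ell-1}\neq 0$ forces the process to exhaust $W$ in exactly $\ell$ steps over the empty base (a surviving isolated vertex would make the base, hence everything, contractible). This produces $\ell$ vertex-disjoint star subgraphs with $\sum_{k}(\alpha_{k}+1)=|W|=i+\ell$, i.e. $\sum_{k}\alpha_{k}=i$; and since each $u_{k},v_{k}$ lies outside $N[u_{k'}]$ for every earlier $k'$, no edge of $G$ joins two distinct matched edges, so $\{e_{1},\dots,e_{\ell}\}$ is an induced matching of $G$.

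The main obstacle is this converse bookkeeping: one must see that the homological degree $\ell-1$ corresponds to \emph{exactly} $\ell$ bouquets whose closed neighbourhoods partition $W$, and that the leaf edges selected along the way form an induced matching of the original $G$ rather than merely of $G_{W}$ (for edges inside $W$ the two notions coincide, which is what makes the argument go through). This rests on the forest dichotomy pinning the sphere dimension to the suspension count; checking that the iteration cannot stall at a nontrivial contractible base, and that the degree-one condition at each stage propagates to the induced-matching property in $G$, is the delicate step.
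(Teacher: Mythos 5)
The paper offers no internal proof of this lemma---it is imported wholesale from Kimura's paper (Theorems 3.1 and 4.1 of the cited reference)---so the only thing to judge is your argument on its own terms, and it is correct. For the forward direction: Hochster's formula in the form $\beta_{i,j}(S/I(G))=\sum_{|W|=j}\dim_K\widetilde H_{j-i-1}(\mathrm{Ind}(G_W);K)$ is the right tool, and with $W=\bigcup_k V(B_k)$ the induced-matching hypothesis is exactly what makes every transversal face $\sigma_S$ independent; since each $e_k\in E(B_k)$ forces $\alpha_k\geq 1$, each $\mathrm{Ind}(B_k)$ really is a simplex plus a disjoint point, so $(\{x_{\beta_k}\}-\{d_k\})$ generates $\widetilde H_0(\mathrm{Ind}(B_k))$, and the K\"unneth isomorphism for joins, $\widetilde H_{\ell-1}(X_1*\cdots*X_\ell)\cong\widetilde H_0(X_1)\otimes\cdots\otimes\widetilde H_0(X_\ell)$ over a field, shows $\iota_*[z]\neq 0$, hence $[z]\neq 0$ in $\mathrm{Ind}(G_W)$. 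This is essentially the mechanism behind Kimura's Theorem 3.1 as well, so here you are reconstructing the cited proof rather than replacing it.

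For the converse you state the suspension lemma correctly (one suspends over deletion of the closed neighbourhood of the \emph{support} $u$ of a leaf, not of the leaf itself); the Mayer--Vietoris cover $\mathrm{Ind}(H)=\mathrm{Ind}(H-u)\cup\bigl(u*\mathrm{Ind}(H\setminus N_H[u])\bigr)$, whose two pieces are cones with apexes $v$ and $u$ and whose intersection is $\mathrm{Ind}(H\setminus N_H[u])$, already gives the homology isomorphism $\widetilde H_n(\mathrm{Ind}(H))\cong\widetilde H_{n-1}(\mathrm{Ind}(H\setminus N_H[u]))$, which is all you use. In fact your own iteration (isolated vertex $\Rightarrow$ cone $\Rightarrow$ contractible; otherwise pick a leaf and suspend; empty graph contributes $S^{-1}$) re-derives the forest dichotomy you invoke, so that external citation is dispensable. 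The one place where your write-up stops short of a proof is the step you yourself flag: to exclude an edge of $G_W$ joining $e_k$ and $e_{k'}$ with $k'<k$, the cases $u_k\sim u_{k'}$ and $v_k\sim u_{k'}$ do follow, as you say, from $u_k,v_k$ surviving step $k'$ (the stage-$k'$ forest is an induced subgraph of $G_W$, so adjacency among survivors is detected there); but the cases $u_k\sim v_{k'}$ and $v_k\sim v_{k'}$ are excluded only because $v_{k'}$ has degree one in $F_{k'-1}$ with sole neighbour $u_{k'}$, while $u_k,v_k$ are still vertices of $F_{k'-1}$---a one-sentence argument that should be made explicit. Since $G_W$ is an induced subgraph of $G$, an induced matching of $G_W$ on vertices of $W$ is an induced matching of $G$, which completes the converse; with that sentence added, your proof is a complete and self-contained substitute for the cited theorems.
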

By Lemma \ref{nonvanishing}, it follows that the equality 
$\reg (S/I(G)) = im(G)$ holds when $G$ has no cycle, 
which was first proved by Zheng \cite{Z}. 

\par
Now we prove Proposition \ref{other-ast}. 

\begin{proof}[Proof of Proposition \ref{other-ast}]
  Recall that the equality $(\ast)$ is satisfied if and only if 
  $(p, p+r)$-th Betti number does not vanish where 
  $p$ is the projective dimension and $r$ is the regularity. 
  \begin{enumerate}
  \item 
    Since $G^{{\rm star}(x_{v})}_{s}$ has no cycle, 
    one has $\reg (S/I(G^{{\rm star}(x_{v})}_{s})) = im (G) = 1$ by \cite{Z}. 
    Also, it is easy to see from Lemma \ref{nonvanishing} that 
    $\projdim (S/I(G^{{\rm star}(x_{v})}_{s})) = s$, 
    and $\beta_{s, s+1} (S/I(G^{{\rm star}(x_{v})}_{s})) \neq 0$. 
  \item 
    Let $V(P_{n}) = \{ x_{1}, x_{2}, \ldots x_{n} \}$ 
    and $E(P_{n}) = \left\{ \{x_{1}, x_{2}\}, \{x_{2}, x_{3}\}, \ldots, 
     \{x_{n-1}, x_{n}\} \right\}$. 
    It follows from \cite[Lemma 2.8]{Morey} that 
    $\depth(S/I(P_{n})) = \lceil n/3 \rceil$. 
    Hence by Auslander--Buchsbaum Theorem, one has 
    \begin{displaymath}
      p := \projdim(S/I(P_{n})) = n - \depth(S/I(P_{n})) 
         = n - \lceil n/3 \rceil. 
    \end{displaymath}
    Also, by \cite[p.4, Proposition]{Bouchat}, 
    one has 
    \begin{displaymath}
      r := \reg(S/I(P_{n})) = \lceil (n-1)/3 \rceil. 
    \end{displaymath}
    \begin{itemize}
    \item {\bf The case $n = 3\ell$ or $n = 3\ell + 1$ : } 
      Then $p = 2\ell$ and $r = \ell$. 
      For $1 \leq k \leq \ell$, let $B_{k}$ be the induced subgraph of $P_n$ 
      on $\{x_{3(k-1) + 1}, x_{3(k-1) + 2}, x_{3k}\}$. 
      Then $B_k$ is the star subgraph 
      $G^{{\rm star}(x_{3(k-1)+2})}_{2}$. 
      Take $e_{k} := \{ x_{3(k-1) + 1}, x_{3(k-1) + 2} \} \in E(B_{k})$. 
      Then $\{ e_{1}, \ldots, e_{\ell} \}$ forms an induced matching of $P_n$. 
      Thus Lemma \ref{nonvanishing} says that 
      $\beta_{p, p + r}(S/I(P_{n})) = \beta_{2\ell, 2\ell + \ell}(S/I(P_{n})) 
      \neq 0$. 
    \item {\bf The case $n = 3\ell + 2$ : } 
      Then $p = 2\ell + 1$ and $r = \ell + 1$. 
      For $1 \leq k \leq \ell$, let $B_{k}$ be the induced subgraph of $P_n$ 
      on $\{x_{3(k-1) + 1}, x_{3(k-1) + 2}, x_{3k}\}$. 
      Then $B_k$ is the star subgraph 
      $G^{{\rm star}(x_{3(k-1)+2})}_{2}$. 
      Also let $B_{\ell + 1}$ be the induced subgraph of $P_n$ 
      on $\{x_{3\ell+1}, x_{3\ell+2}\}$, 
      which is the star subgraph 
      $G^{{\rm star}(x_{3\ell +2})}_{1}$. 
      Take $e_{k} := \{ x_{3(k-1) + 1}, x_{3(k-1) + 2} \} \in E(B_{k})$ 
      for $k= 1,\ldots, \ell, \ell + 1$. 
      Then $\{ e_{1}, \ldots, e_{\ell}, e_{\ell + 1} \}$ 
      forms an induced matching of $P_n$. 
      Thus Lemma \ref{nonvanishing} says that 
      $\beta_{p, p + r}(S/I(P_{n})) 
      = \beta_{2\ell+1, (2\ell+1) + \ell+1}(S/I(P_{n})) \neq 0$.
    \end{itemize}
  \item Let $V(C_{n}) = \{ x_{1}, x_{2}, \ldots x_{n} \}$ 
    and $E(C_{n}) = \left\{ \{x_{1}, x_{2}\}, \ldots, 
     \{x_{n-1}, x_{n}\}, \{x_{1}, x_{n}\}  \right\}$. 
    It follows from \cite[p.\  117]{Cimpoeas} that 
    \[
      \depth (S/I(C_{n})) = \lceil (n-1)/3 \rceil. 
    \]
    Hence by Auslander--Buchsbaum Theorem, one has 
    \begin{displaymath}
      p := \projdim(S/I(C_{n})) = n - \depth(S/I(C_{n})) 
         = n - \lceil (n-1)/3 \rceil. 
    \end{displaymath}
    Also by \cite[Theorem 5.2]{BHT}, one has 
    \[
      r := \reg(S/I(C_{n})) = {
      \begin{cases} 
        \lfloor n/3 \rfloor, & \text{if $n \equiv 0, 1 \ {\rm mod} \ 3$}, \\ 
        \lfloor n/3 \rfloor + 1, & \text{if $n \equiv 2 \ {\rm mod} \ 3$}. 
      \end{cases}}
    \]

  \par
  Then we can prove the case where $n= 3 \ell$. 
  In this case, 
  $p = 2\ell$ and $r = \ell$. 
  For $1 \leq k \leq \ell$, let $B_{k}$ be the induced subgraph of $C_n$ 
  on $\{x_{3(k-1) + 1}, x_{3(k-1) + 2}, x_{3k}\}$. 
  Then $B_k$ is the star subgraph 
  $G^{{\rm star}(x_{3(k-1)+2})}_{2}$. 
  Take $e_{k} := \{ x_{3(k-1) + 1}, x_{3(k-1) + 2} \} \in E(B_{k})$. 
  Then $\{ e_{1}, \ldots, e_{\ell} \}$ forms an induced matching of $C_n$. 
  Thus Lemma \ref{nonvanishing} says that 
  $\beta_{p, p + r} (S/I(C_{n})) = \beta_{2\ell, 2\ell + \ell} (S/I(C_{n})) 
  \neq 0$. 
  Hence $S/I(C_{n})$ satisfies the equality $(*)$. 

  \par
  For the cases $n= 3 \ell +1, 3 \ell +2$, we compute all invariants 
  appearing in the equality $(\ast)$. 
  We have already known the dimension, the depth, and the regularity. 
  In order to compute $\deg h (S/I(C_n), \lambda)$, 
  consider the short exact sequence 
  \[
   0 \to S/I(C_{n}) : (x_{n}) (-1) \xrightarrow{\ \cdot x_{n} \ } S/I(C_{n}) \to S/I(C_{n}) + (x_{n}) \to 0. 
  \]
  Since $I(C_{n}) + (x_{n}) = (x_{n}) + I(P_{n-1})$, we have  
  \[
   S/I(C_{n}) + (x_{n}) \cong K[V(P_{n-1})]/I(P_{n-1}). 
  \]
  Also since $I(C_{n}) : (x_{n}) = (x_{1}, x_{n-1}) + (x_{2}x_{3}, 
  \ldots, x_{n-3}x_{n-2})$, we have  
  \begin{eqnarray*}
    S/I(C_{n}) \colon (x_{n}) &\cong& K[x_{2}, \ldots, x_{n-2}, x_{n}]/(x_{2}x_{3}, \ldots, x_{n-3}x_{n-2})\\ 
  &\cong& K[V(P_{n-3})]/I(P_{n-3}) \otimes_{K} K[x_{n}].  
  \end{eqnarray*}
  Thus Lemma \ref{HilbertSeries} says that 
  \begin{eqnarray*}
  H(S/I(C_{n}), \lambda) &=& H(S/I(C_{n}) + (x_{n}), \lambda) + \lambda H(S/I(C_{n}) \colon (x_{n}), \lambda) \\
  &=& \frac{h(K[V(P_{n-1})]/I(P_{n-1}), \lambda)}{(1-\lambda)^{\lceil (n-1)/2 \rceil}} 
  + \frac{\lambda h(K[V(P_{n-3})]/I(P_{n-3}), \lambda)}{(1-\lambda)^{\lceil (n-3)/2 \rceil + 1}} \\
  &=& \frac{h(K[V(P_{n-1})]/I(P_{n-1}), \lambda) + \lambda h(K[V(P_{n-3})]/I(P_{n-3}), \lambda)}{(1-\lambda)^{\lceil (n-1)/2 \rceil}}. 
  \end{eqnarray*}
%
  By (2), one has 
    \begin{displaymath}
      \begin{aligned}
        &\deg h(K[V(P_n)]/I(P_{n}), \lambda) \\
        &= \reg(K[V(P_n)]/I(P_{n})) + \dim K[V(P_n)]/I(P_{n}) 
           - \depth(K[V(P_n)]/I(P_{n})) \\
        &= \lceil (n-1)/3 \rceil + \lceil n/2 \rceil - \lceil n/3 \rceil \\
        &= {\begin{cases} 
             \lceil n/2 \rceil, 
             & \text{if $n \equiv 0, 2 \ {\rm mod} \ 3$}, \\ 
            \lceil n/2 \rceil - 1, 
            & \text{if $n \equiv 1 \ {\rm mod} \ 3$}. 
            \end{cases}}  \\
      \end{aligned}
    \end{displaymath}
  \begin{itemize}
  \item {\bf The case $n = 3\ell + 1$ : } 
    Then $\reg (S/I(C_{n})) = \depth (S/I(C_{n})) = \ell$ 
    and $\dim S/I(C_{n}) = \lceil 3\ell/2 \rceil$. 
    Moreover, since 
    \[
      \deg h(K[V(P_{n-1})]/I(P_{n-1}), \lambda) 
        = \deg h(K[V(P_{3\ell})]/I(P_{3\ell}), \lambda) 
        = \lceil 3\ell/2 \rceil
    \]
    and 
    \begin{displaymath}
      \begin{aligned}
        \deg h(K[V(P_{n-3})]/I(P_{n-3}), \lambda) 
          &= \deg h(K[V(P_{3\ell - 2})]/I(P_{3\ell - 2}), \lambda) \\
          &= \lceil (3\ell - 2)/2 \rceil - 1 = \lceil 3\ell/2 \rceil - 2, 
      \end{aligned}
    \end{displaymath}
    one has $\deg h(S/I(C_{n}), \lambda) = \lceil 3\ell/2 \rceil$. 
    Hence $S/I(C_{n})$ satisfies the equality $(*)$. 
  \item {\bf The case $n = 3\ell + 2$ : } 
    Then $\reg (S/I(C_{n})) = \depth (S/I(C_{n})) = \ell + 1$ 
    and $\dim S/I(C_{n}) = \lceil (3\ell + 1)/2 \rceil$. 
    Moreover, since 
    \[
      \begin{aligned}
      \deg h(K[V(P_{n-1})]/I(P_{n-1}), \lambda) 
        &= \deg h(K[V(P_{3\ell + 1})]/I(P_{3\ell + 1}), \lambda) \\
        &= \lceil (3\ell + 1)/2 \rceil - 1 
      \end{aligned}
    \]
    and 
    \[
      \begin{aligned}
        \deg h(K[V(P_{n-3})]/I(P_{n-3}), \lambda) 
          &= \deg h(K[V(P_{3\ell - 1})]/I(P_{3\ell - 1}), \lambda) \\
          &= \lceil (3\ell - 1)/2 \rceil = \lceil (3\ell + 1)/2 \rceil - 1, 
      \end{aligned}
    \]
    one has $\deg h(S/I(C_{n}), \lambda) = \lceil (3\ell + 1)/2 \rceil$. 
    Hence $S/I(C_{n})$ satisfies the equality $(*)$. 
  \end{itemize}
  \item
    Since $G_{s}$ has no cycle, 
    one has $\reg (S/I(G_{s})) = im (G) = 1$ by \cite{Z}. 
    Also it is easy to see from Lemma \ref{nonvanishing} that 
    $\projdim (S/I(G_{s})) = s+2$, 
    and $\beta_{s+2, (s+2)+1} (S/I(G_{s})) \neq 0$. \qed
  \end{enumerate}
\end{proof}

\begin{Remark}\normalfont
  The graph $G_s$ in Proposition \ref{other-ast} (as well as $P_{3\ell +1}$) 
  is an example of a graph 
  satisfying $(\ast)$ with $\deg h(S/I(G_{s}), \lambda) < \dim S/I(G_{s}) 
  (=s+2)$ 
  because $\reg (S/I(G_s)) = 1 < 2 = (s+4) - \projdim (S/I(G_s)) 
  = \depth (S/I(G_s))$. 
  Note that Cameron--Walker graphs $G$ satisfies  
  $\deg h(S/I(G), \lambda) = \dim S/I(G)$.
\end{Remark}


\section{Other properties on Cameron--Walker graphs}
\label{sec:application}
In this section, we provide some properties on a Cameron--Walker graph 
derived from the results of previous sections. 

\par
Let $G$ be a finite simple graph and $S = K[V(G)]$. 
Suppose that $S/I(G)$ is Cohen--Macaulay. 
Then the equalities $(*)$ and $\dim S/I(G) = \depth (S/I(G))$ hold. 
Hence one has $\deg h(S/I(G), \lambda) = \reg(S/I(G))$. 
Nevertheless, $\deg h(S/I(G), \lambda) = \reg(S/I(G))$ does not imply that 
$S/I(G)$ is Cohen--Macaulay, see \cite[Example 3.2]{HM1}. 
Moreover, in general, there is no relationship 
between the regularity and the degree of the $h$-polynomial. 
Actually, \cite{HMVT} proved that for given integers $r, s \geq 1$, 
there exists a finite simple graph $G$ such that 
$\reg(S/I(G)) = r$ and $\deg h(S/I(G), \lambda) = s$. 
%
However, we can derive from Proposition \ref{s=d} the relation between 
$\reg(S/I(G))$ and $\deg h(S/I(G), \lambda)$ when $G$ is Cameron--Walker. 
Moreover we provide a complete classification of Cameron--Walker graphs $G$ 
with $\deg h(S/I(G), \lambda) = \reg(S/I(G))$. 

\begin{Theorem}
  \label{s=r}
  Let $G$ be a Cameron--Walker graph whose labeling of vertices is 
  as in Figure \ref{fig:CameronWalkerGraph}. 
  Then we have $\deg h(S/I(G), \lambda) \geq \reg(S/I(G))$. 
  Moreover the equality $\deg h(S/I(G), \lambda) = \reg(S/I(G))$ 
  holds if and only if $s_{i} = 1$ for all $1 \leq i \leq m$ and 
  $t_{j} \geq 1$ for all $1 \leq j \leq n$. 
\end{Theorem}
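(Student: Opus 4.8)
The plan is to derive both assertions from a single direct computation, combining Proposition~\ref{s=d} with the regularity formula for Cameron--Walker graphs recorded in Section~\ref{sec:(ast)}. Recall from Proposition~\ref{s=d} that
\[
\deg h\left(S/I(G), \ \lambda\right) = \dim S/I(G) = \sum_{i=1}^m s_i + \sum_{j=1}^n \max\{t_j, 1\},
\]
and recall that for a Cameron--Walker graph the regularity equals the (induced) matching number, so that $\reg(S/I(G)) = \sum_{j=1}^n t_j + m$. Subtracting these two closed-form expressions is essentially the entire content of the theorem, so the first thing I would do is form this difference explicitly.

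Carrying out the subtraction, I would rewrite
\[
\deg h\left(S/I(G), \ \lambda\right) - \reg(S/I(G)) = \sum_{i=1}^m (s_i - 1) + \sum_{j=1}^n \left(\max\{t_j, 1\} - t_j\right).
\]
The next step is to observe that every summand on the right is a nonnegative integer: each term $s_i - 1 \geq 0$ since $s_i \geq 1$ by the labeling convention, while $\max\{t_j, 1\} - t_j$ equals $1$ when $t_j = 0$ and $0$ when $t_j \geq 1$, and is in any case nonnegative. Thus the right-hand side is a sum of nonnegative integers, which immediately yields the inequality $\deg h\left(S/I(G), \lambda\right) \geq \reg(S/I(G))$.

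For the characterization of equality, the plan is to invoke the elementary fact that a sum of nonnegative integers vanishes if and only if each summand vanishes. Hence $\deg h\left(S/I(G), \lambda\right) = \reg(S/I(G))$ holds if and only if $s_i - 1 = 0$ for all $i$ and $\max\{t_j, 1\} - t_j = 0$ for all $j$; the former is equivalent to $s_i = 1$ for all $1 \leq i \leq m$, and the latter (since $s_i-1\geq 0$ forces the first sum to vanish independently) is equivalent to $t_j \geq 1$ for all $1 \leq j \leq n$. This is exactly the asserted condition.

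Since the whole argument reduces to an elementary comparison of two formulas, I do not expect any genuine obstacle in this theorem itself: the substantive work is already carried out in Proposition~\ref{s=d}, whose induction on $m+n$ establishes that the degree of the $h$-polynomial equals the dimension. Once that formula and the regularity formula are in hand, both the inequality and its equality case follow by inspection.
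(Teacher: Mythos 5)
Your proposal is correct and follows essentially the same route as the paper: both combine Proposition~\ref{s=d} with the formula $\reg(S/I(G)) = \sum_{j=1}^{n} t_j + m$, write the difference as a sum of nonnegative terms $\sum_{i=1}^{m}(s_i - 1) + \sum_{j=1}^{n}\bigl(\max\{t_j,1\} - t_j\bigr)$, and read off both the inequality and the equality case from the vanishing of each summand. No gaps; your write-up is just a slightly more explicit version of the paper's argument.
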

\begin{proof}
  We first note that $\reg (S/I(G)) = \sum_{j = 1}^{n} t_{j} + m$.
  Combining this with Proposition \ref{s=d}, one has 
  \begin{displaymath}
    \deg h(S/I(G), \lambda) - \reg(S/I(G)) 
    = \left(\sum_{i = 1}^{m}s_{i} - m\right) 
    + \sum_{j = 1}^{n}\left( \max\left\{t_{j}, 1\right\} - t_{j} \right). 
  \end{displaymath}
  Note that each summands of right hand-side is non-negative. 
  Then the desired assertion follows. 
\end{proof}


\par
Let $G$ be a Cameron--Walker graph. 
Combining the inequality 
\begin{displaymath}
  \deg h\left(S/I(G), \lambda \right) - \reg\left( S/I(G) \right) 
  \leq \dim S/I(G) - \depth\left( S/I(G) \right)
\end{displaymath}
with Theorem \ref{CW}, Theorem \ref{s=r}, and Proposition \ref{depth2}, 
one has 
\begin{displaymath}
  \dim S/I(G) = \deg h\left(S/I(G), \lambda \right) \geq \reg (S/I(G)) \geq \depth (S/I(G)) \geq 2. 
\end{displaymath}
Then it is natural to ask the following 
\begin{Question}\normalfont
  \label{quest:dre}
  Given arbitrary integers $d, r, e$ with $d \geq r \geq e \geq 2$, 
  is there a Cameron--Walker graph $G$ 
  satisfying 
  \begin{displaymath}
    (\ast \ast) \quad 
      \dim S/I(G) = \deg h\left(S/I(G), \lambda \right) = d, \  
      \reg S/I(G) = r, \  
      \depth S/I(G) = e? 
  \end{displaymath}
\end{Question}

We have already investigated Cameron--Walker graphs $G$ 
with $\depth S/I(G) = 2$ in Proposition \ref{depth2}. 
Their invariants are as follows: 
  \begin{enumerate}
  \item[(e1)] $\dim S/I(G) = \deg h\left(S/I(G), \lambda \right) = s_1 + s_2 + n > 2 =  
    \reg (S/I(G)) = \depth (S/I(G))$. 
  \item[(e2)] $\dim S/I(G) = \deg h\left(S/I(G), \lambda \right) = s_1 + 1 \geq 2 
    = \reg (S/I(G)) = \depth (S/I(G))$.
  \item[(e3)] $\dim S/I(G) = \deg h\left(S/I(G), \lambda \right) = \reg (S/I(G)) = t_1 + 1 > 2 
    = \depth (S/I(G))$. 
  \end{enumerate}

Therefore we have the following answer for Question \ref{quest:dre} 
when $e = 2$. 
\begin{Corollary}
  Let $d, r, e$ be integers with $d \geq r \geq e= 2$. 
  Then there exists a Cameron--Walker graph $G$ satisfying 
    $(\ast \ast)$ if and only if $r=2$ or $r=d$. 
\end{Corollary}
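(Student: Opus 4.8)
The plan is to reduce the entire statement to the structural classification already obtained in Proposition \ref{depth2}. Since $(\ast\ast)$ forces $\depth S/I(G) = e = 2$, any Cameron--Walker graph $G$ realizing $(\ast\ast)$ must be one of the three types (e1), (e2), (e3) listed there, so the problem becomes a finite case analysis recording which pairs $(d, r) = (\dim S/I(G), \reg S/I(G))$ can occur. Throughout I would use the two formulas already established in Section \ref{sec:(ast)}, namely $\dim S/I(G) = \deg h(S/I(G), \lambda) = \sum_{i} s_{i} + \sum_{j} \max\{t_{j}, 1\}$ (Theorem \ref{CW} together with Proposition \ref{s=d}) and $\reg S/I(G) = \sum_{j} t_{j} + m$.

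For the \emph{only if} direction I would simply read off the invariants displayed just above the Corollary. In case (e1) one has $r = 2$ and $d = s_{1} + s_{2} + n$; in case (e2) one has $r = 2$ and $d = s_{1} + 1$; and in case (e3) one has $r = d = t_{1} + 1$. Hence in every admissible graph either $r = 2$ or $r = d$, which is precisely the asserted necessary condition.

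For the \emph{if} direction I would exhibit explicit graphs. When $r = 2$, take the type (e2) graph with $m = n = 1$, $t_{1} = 1$ and $s_{1} = d - 1$; the constraint $d \geq r = 2$ guarantees $s_{1} \geq 1$, and then $\dim S/I(G) = \deg h(S/I(G), \lambda) = s_{1} + 1 = d$, $\reg S/I(G) = t_{1} + m = 2$, and $\depth S/I(G) = 2$ by Proposition \ref{depth2}. When $r = d \geq 3$, take the type (e3) graph with $m = n = 1$, $s_{1} = 1$ and $t_{1} = d - 1 \geq 2$; then $\dim S/I(G) = \deg h(S/I(G), \lambda) = 1 + t_{1} = d$, $\reg S/I(G) = t_{1} + 1 = d$, and $\depth S/I(G) = 2$ again by Proposition \ref{depth2}. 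The single remaining boundary pair $(d, r) = (2, 2)$ is covered by the $s_{1} = 1$ instance of (e2).

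I do not expect a genuine obstacle here, since all the hard analytic work (the value of the $a$-invariant, the depth classification, and the regularity formula) is already in place. The only points requiring care are purely bookkeeping: checking that the graphs I write down are honestly Cameron--Walker (connected, carrying a leaf edge on the $v$-side and a pendant triangle on the $w$-side, and being neither a star graph nor a star triangle), and not double counting the pair $(2,2)$, which satisfies both $r = 2$ and $r = d$ simultaneously.
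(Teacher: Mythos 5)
Your proposal is correct and follows essentially the same route as the paper: the paper also reduces to the depth-two classification of Proposition \ref{depth2}, reads off the displayed invariants of types (e1)--(e3) to see that $r=2$ or $r=d$ is forced, and (implicitly) realizes any admissible pair by choosing $s_1$ in type (e2) for $r=2$ and $t_1$ in type (e3) for $r=d$. Your explicit parameter choices and the check of the boundary pair $(d,r)=(2,2)$ just make the paper's implicit sufficiency argument fully precise.
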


When $e \geq 3$, we have the following answer for Question \ref{quest:dre}. 
\begin{Theorem}
  \label{dre>3}
  Given arbitrary integers $d, r, e$ with $d \geq r \geq e \geq 3$, 
  there exists a Cameron--Walker graph $G$ satisfying 
  $\dim S/I(G) = \deg h(S/I(G), \lambda) = d$, $\reg(S/I(G)) = r$, 
  and $\depth(S/I(G)) = e$. 
\end{Theorem}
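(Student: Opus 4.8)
The plan is to split the argument according to whether $d=r$ or $d>r$, and in each case to exhibit an explicit Cameron--Walker graph $G$ in the labeling of Figure \ref{fig:CameronWalkerGraph} and read off its three invariants. Throughout I would use the three facts already at my disposal: $\dim S/I(G)=\deg h(S/I(G),\lambda)=\sum_i s_i+\sum_j\max\{t_j,1\}$ by Proposition \ref{s=d} and Theorem \ref{CW}; the regularity formula $\reg(S/I(G))=\sum_j t_j+m$; and $\depth(S/I(G))=i(G)$ together with the bounds of Lemma \ref{i(G)}. Note that Proposition \ref{s=d} automatically delivers $\dim=\deg h$, so it suffices to match $\dim$, $\reg$ and $\depth$ to $d$, $r$, $e$.

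For the case $d=r$ I would take $m=1$ with a complete bipartite part $K_{1,e-1}$, that is $v_1$ joined to $w_1,\ldots,w_{e-1}$, with a single leaf edge at $v_1$ (so $s_1=1$) and pendant triangles $t_1,\ldots,t_{e-1}\geq 1$ chosen with $\sum_j t_j=r-1$ (possible since $r-1\geq e-1$). Then $\reg=(r-1)+1=r$ and $\dim=s_1+\sum_j t_j=r=d$, while, the bipartite part being complete, the second assertion of Lemma \ref{i(G)} gives $i(G)=\min\{s_1+(e-1),\ \sum_j t_j+1\}=\min\{e,r\}=e$. This settles $d=r$ (including $d=r=e$).

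The substantive case is $d>r$, and here the main point is that a complete bipartite part cannot work: for such graphs Lemma \ref{i(G)} forces $\depth=\min\{\sum_i s_i+n,\ \reg\}$, and requiring this to equal $e<r$ would cap the dimension at $r+e-2$. So I would use a \emph{non-complete} bipartite part, arranged so that the cheap lower bound $m+|\{j:t_j>0\}|$ of Lemma \ref{i(G)} is actually attained. Concretely, take $m=2$, let the $w$-side consist of one vertex $w_0$ carrying no triangle together with vertices $u_1,\ldots,u_{e-2}$ each carrying pendant triangles, with bipartite edges $\{w_0,v_1\},\{w_0,v_2\}$ and $\{u_k,v_1\}$ for all $k$; set $s_1=1$, $s_2=d-r$, and triangle counts $t_k\geq 1$ on $u_k$ with $\sum_k t_k=r-2$ (possible since $r-2\geq e-2$). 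Then $\reg=(r-2)+2=r$ and $\dim=(1+(d-r))+(1+(r-2))=d$. For the depth, Lemma \ref{i(G)} gives $i(G)\geq m+|\{j:t_j>0\}|=2+(e-2)=e$, and I would exhibit the independent set $A=\{u_1,\ldots,u_{e-2},\,v_2,\,x^{(1)}_1\}$: it dominates $V(G)$ (the $u_k$ cover $v_1$ and all triangle vertices, $v_2$ covers $w_0$ and its own $s_2$ leaves, and $x^{(1)}_1$ is the single leaf of $v_1$) and has $|A|=e$, whence $i(G)=e$.

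The hard part is precisely this last step: forcing $\depth$ strictly below $\reg$ while letting $\dim$ grow without bound. The design idea that makes it go through is to house the large leaf bundle on $v_2$, which we keep \emph{inside} the dominating set so that its $s_2$ leaves cost nothing, and to hang every triangle off the low-degree vertex $v_1$ (with $s_1=1$), so that promoting the $u_k$ into $A$ evicts only $v_1$ and costs just its one leaf. Checking that $A$ is simultaneously independent and dominating, and matching its size against the Lemma \ref{i(G)} lower bound, is the crux; the dimension and regularity values are then routine from Proposition \ref{s=d} and the regularity formula. I would finish by remarking in each case that $G$ is genuinely Cameron--Walker---connected, possessing a leaf edge, and neither a star nor a star triangle---which is immediate from the explicit structure.
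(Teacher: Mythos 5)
Your proposal is correct and follows essentially the same approach as the paper's proof: in both cases ($d=r$ and $d>r$) you exhibit an explicit Cameron--Walker graph, read off $\dim = \deg h$ and $\reg$ from Proposition \ref{s=d} and the formula $\reg(S/I(G)) = \sum_j t_j + m$, and pin down the depth by matching the lower bound of Lemma \ref{i(G)} with an explicit independent dominating set of size $e$. Your graphs are roughly ``transposes'' of the paper's (the paper uses $m=e-1$, $n=2$ with all triangles on one $w$-vertex for $d>r$, while you use $m=2$, $n=e-1$ with the triangles spread over the $w$-side), but the verification is identical in structure.
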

\begin{proof}
  We use the labeling of vertices of a Cameron--Walker graph 
  as in Figure \ref{fig:CameronWalkerGraph}. 
  Set $V_{\rm bip} = \{ v_1, \ldots, v_m, \; w_1, \ldots, w_n \}$. 
  \newline
  {\bf $\bullet$ The case $d > r$:} 
  Let $G$ be the Cameron--Walker graph with $m=e-1$, $n=2$, 
  $s_1 = \cdots = s_{e-2} = 1$, $s_{e-1} = d-r$, $t_1 = r-e+1$, and 
  $t_2 =0$ such that 
  \begin{displaymath}
    E(G_{V_{\rm bip}}) = \big\{ \{ v_1, w_1 \}, 
      \; \{ v_1, w_2 \}, \{ v_2, w_2 \}, \ldots, \{ v_{e-1}, w_2 \} \big\}; 
  \end{displaymath}
  see Figure \ref{fig:d>r}. 
  \begin{figure}[htbp]
  \centering

  \bigskip

  \begin{xy}
    \ar@{} (0,0);(20, 0) *{\text{$G =$}};
    \ar@{} (0,0);(50, 6) *++!R{v_{1}} *\cir<4pt>{} = "A1";
    \ar@{-} "A1";(50, -6) *++!R{w_{1}} *\cir<4pt>{} = "A2";
    \ar@{} (0,0);(64, 6) *++!R{v_{2}} *\cir<4pt>{} = "B1";
    \ar@{-} "B1";(64, -6) *++!R{w_{2}} *\cir<4pt>{} = "B2";
    \ar@{-} "A1";"B2";
    \ar@{-} "B2";(90, 6) *++!L{v_{e-2}} *\cir<4pt>{} = "C";
    \ar@{-} "B2";(110, 6) *++!L{v_{e-1}} *\cir<4pt>{} = "D";
    \ar@{} (0,0); (77, 9) *++!U{\cdots};
    \ar@{-} "A1";(50, 18) *\cir<4pt>{};
    \ar@{-} "B1";(64, 18) *\cir<4pt>{};
    \ar@{-} "C";(90, 18) *\cir<4pt>{};
    \ar@{-} "D";(100, 18) *\cir<4pt>{};
    \ar@{-} "D";(120, 18) *\cir<4pt>{};
    \ar@{} (0,0); (110, 21) *++!U{\cdots};
    \ar@{-} "A2";(36, -18) *\cir<4pt>{} = "T1";
    \ar@{-} "A2";(40, -22) *\cir<4pt>{} = "T2";
    \ar@{-} "T1";"T2";
    \ar@{-} "A2";(64, -18) *\cir<4pt>{} = "T3";
    \ar@{-} "A2";(60, -22) *\cir<4pt>{} = "T4";
    \ar@{-} "T3";"T4";
    \ar@{} (0,0); (50, -12) *++!U{\cdots};
    \ar@{} (0,0);(110, 30) *{\text{$d - r$}};
    \ar@{} (0,0);(110, 24) *{\text{leaf edges}};
    \ar@{} (0,0);(50, -28) *{\text{$r - e + 1$}};
    \ar@{} (0,0);(50, -34) *{\text{pendant triangles}};
  \end{xy}

  \bigskip

  \caption{The Cameron--Walker graph $G$ in the proof of Theorem \ref{dre>3}}
  \label{fig:d>r} with $d>r$
  \end{figure}
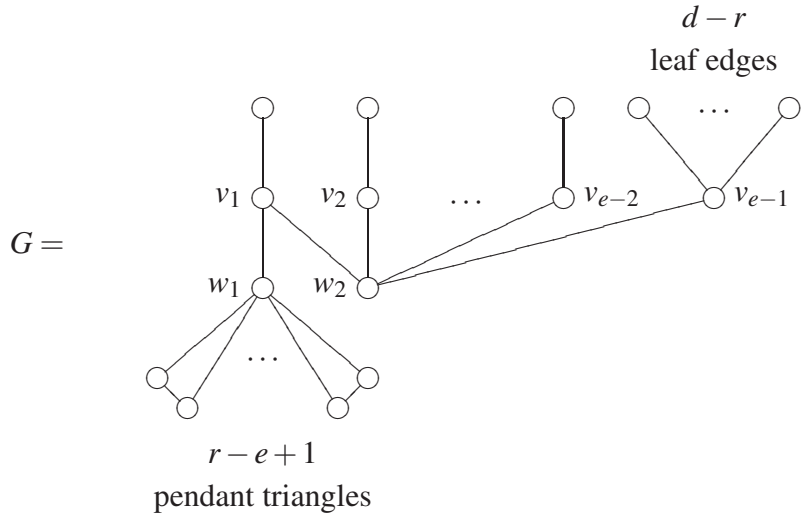
  Then it is easy to see that 
  $\dim (S/I(G)) = \deg h\left(S/I(G), \lambda \right) =d$ and $\reg (S/I(G)) = r$. 
  Also, $A := \{ v_2, \ldots, v_{e-1} \} \cup \{ x_1^{(1)}, w_1 \}$ 
  is an independent set of $V(G)$ with $A \cup N_G (A) = V(G)$ 
  which gives $i(G)$. Thus one has $\depth S/I(G) = i(G) = |A| = e$. 

  \par
  {\bf $\bullet$ The case $d = r$:} 
  Let $G$ be the Cameron--Walker graph with $m=e-1$, $n=1$, 
  $s_1 = \cdots = s_{e-1} = 1$, and $t_1 = d-e+1$. 
  Then it is easy to see that 
  $\dim (S/I(G)) = \deg h\left(S/I(G), \lambda \right) = \reg (S/I(G)) = d$. 
  Also $A := \{ x_1^{(1)}, \ldots, x_{e-1}^{(1)} \} \cup \{  w_1 \}$ 
  is an independent set of $V(G)$ with $A \cup N_G (A) = V(G)$ 
  which gives $i(G)$. Thus one has $\depth S/I(G) = i(G) = |A| = e$. 
\end{proof}

%



\bigskip

\noindent
{\bf Acknowledgment.}
The authors were partially supported by JSPS KAKENHI 26220701, 15K17507, 
17K14165 and 16J01549.

\bigskip

\end{document}